\numberwithin{equation}{section}
\newtheorem{theorem}{Theorem}[section]
\newtheorem{proposition}[theorem]{Proposition}
\newtheorem{remark}[theorem]{Remark}
\newtheorem{corollary}[theorem]{Corollary}
\newcommand{\PKp}{P_{\K^\perp}}
\newcommand{\J}{\mathfrak{I}}
\newcommand{\bvl}{\big(\hspace{-.12cm}\big(}
\newcommand{\bvr}{\big)\hspace{-.12cm}\big)}
\def\what{\widehat}
\def\bm{\mathbf m}
\def\and{\hbox{and}}
\def\max{\text{\rm max}}
\newcommand{\R}{{\mathbb{R}}}
\newcommand{\N}{{\mathbb N}}
\newcommand{\I}{\mathcal{I}}
\def\k0{\kappa_0}
\def\leaderfill{\leaders\hbox to 1em{\hss-\hss}\hfill\ }
\newcommand{\Pb}{\mathbb{P}}
\newcommand{\Nc}{\mathcal{N}}
\newcommand{\Yc}{\mathcal{Y}}
\newcommand{\comments}[1]{}
\def\lf{\left} 
\def\rg{\right}
\renewcommand{\phi}{\varphi}
\newcommand{\nn}{\nonumber}
\newcommand{\D}{\displaystyle }
\renewcommand{\P}{\mathbb P}
\newcommand{\h}{\mathbb H}
\newcommand{\cal}{\mathcal }
\newcommand{\bu}{{\mathbf u}}
\newcommand{\bz}{{\mathbf z}}
\newcommand{\bv}{{\mathbf v}}
\newcommand{\bw}{{\mathbf w}}
\newcommand{\dt}{{\D\frac{\textrm{d}}{\textrm{d}t}}}
\newcommand{\ra}{\rightarrow}
\renewcommand{\l}{\langle}
\newcommand{\bdy}{\partial}
\newcommand{\Om}{\Omega}
\newcommand{\De}{\Delta}
\newcommand{\nab}{\nabla}
\newcommand{\p}{\mathbb P}
\newcommand{\K}{\cal K}
\newcommand{\bW}{{\bf W}}
\newcommand{\Sob}[2]{\lVert#1\rVert_{#2}}
\newcommand{\lra}{\longrightarrow}
\newcommand{\co}{{\cal O}}
\newcommand{\be}{\begin{equation}}
\newcommand{\ee}{\end{equation}}
\newcommand{\beas}{\begin{eqnarray*}}
	\newcommand{\eeas}{\end{eqnarray*}}
\newcommand{\bea}{\begin{eqnarray}}
\newcommand{\eea}{\end{eqnarray}}
\newcommand{\mbb}[1]{{\mathbb{#1}}}
\newcommand{\E}{\mbb{E}}
\newcommand{\om}{\omega}
\newcommand{\mbf}{\mathbf}
\newcommand{\bbr}{\mathbb{R}}
\newcommand{\pp}{\hspace*{.000cm}\mathord{\raisebox{-0.095em}{\scaleobj{.86}{\includegraphics[width=1em]{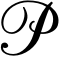}}}}\hspace*{.02cm}}
\newcommand{\dr}{\textrm{d}}
\newcommand{\bmk}{\mathbf{m}^{(k)}}
\newcommand{\hbmk}{\widehat{\mathbf{m}}^{(k)}}
\newcommand{\hbm}{\widehat{\mathbf{m}}}
\newcommand{\bek}{\mathbf{e}^{(k)}}
\newcommand{\beb}{\overline{\mathbf{e}}}
\newcommand{\bee}{\mathbf{e}}
\newcommand{\by}{\mathbf{y}}
\newcommand{\charfn}[1]{{\raisebox{1.2pt}{\mbox{$\chi
				_{\kern-1pt\lower3pt\hbox{{$\scriptstyle{#1}$}}}$}}}}
\title[Unified Approach to data Assimilation]{A unified framework for the analysis of accuracy and stability of a class of approximate Gaussian filters for the Navier-Stokes Equations }
\author{\vspace*{-.6cm} Animikh Biswas$^{\dagger}$ and Micha\l \;Branicki$^{\,\ddagger*}$ }
\address{\small \vspace*{-0.cm} $^\dagger$ Department of Mathematics and Statistics\\ 
	University of Maryland Baltimore County  \\ 
	Baltimore, MD 21250, USA\\[.2cm]
	$^{\,\ddagger}$\small
Department of Mathematics, University of Edinburgh, Scotland, UK \\
\hspace*{-1.9cm} $^{\,*}$\!\! The Alan Turing Institute for Data Science, London, UK	}
\email{abiswas@umbc.edu}
\email{m.branicki@ed.ac.uk}
\begin{document}

\begin{abstract}
Bayesian state estimation of a dynamical system utilising  a stream of noisy measurements is important in many geophysical and engineering applications. In these cases,  nonlinearities, high (or infinite)  dimensionality of the state space, and sparse observations pose key challenges for deriving efficient and accurate data assimilation (DA) techniques.  A number  of DA algorithms used commonly in practice, such as the Ensemble Kalman Filter (EnKF) or Ensemble Square Root Kalman Filter (EnSRKF), suffer from serious drawbacks such as  {\em catastrophic filter divergence} and filter instability. The analysis of stability and accuracy of these DA schemes has thus far focused either on finite-dimensional dynamics, or on complete observations in case of infinite-dimensional dissipative systems.  We develop a unified framework for the analysis of several well-known and empirically efficient data assimilation techniques derived from various Gaussian approximations of the Bayesian filtering schemes for geophysical-type dissipative dynamics with quadratic nonlinearities. 
We establish  rigorous results on (time-asymptotic) accuracy and stability of these algorithms with  general  covariance and observation operators.    {\em The accuracy and stability} results for EnKF and EnSRKF for dissipative PDEs are, to the best of our knowledge, completely new in this general setting. It turns out that a hitherto unexploited cancellation property involving the ensemble covariance and observation operators and the concept of {\em covariance localization}  in conjunction with covariance inflation play a pivotal role in the accuracy and stability for EnKF and EnSRKF.  Our approach also elucidates the links, via {\em determining functionals},  between the approximate-Bayesian and control-theoretic approaches to data assimilation. We consider the `model'  dynamics governed by the two-dimensional incompressible Navier-Stokes equations and observations given by noisy measurements of averaged volume elements or spectral/modal observations  of the velocity field. In this setup, several continuous-time data assimilation techniques, namely the so-called 3DVar, EnKF  and EnSRKF  reduce to  a stochastically forced Navier-Stokes equations.  For the first time, we derive conditions for accuracy and stability  of  EnKF and EnSRKF.  The derived bounds are given for the limit supremum of the expected value of the $L^2$ norm and of the $\mathbb{H}^1$ Sobolev norm of the difference between the approximating solution and the actual solution  as the time tends to~infinity. Moreover, our analysis reveals an interplay between the resolution of the observations (roughly,  the `richness' of the observation space) associated with the observation operator underlying the data assimilation algorithms and  covariance inflation and localization which are  employed in practice for improved filter performance.

\end{abstract}

\maketitle

\setcounter{tocdepth}{2}
\vspace*{.5cm}\tableofcontents

\section{Introduction}\label{intro}
The goal of data assimilation (DA) is to  estimate and/or  predict the state  of a dynamical system,  given the (often approximate) dynamics and usually partial and noisy observations of the state in the absence of the knowledge of the initial condition. This setup is commonly encountered in applications, ranging from autonomous navigation to weather prediction, and it poses multiple practical and theoretical  challenges which most often stem from the sensitivity of the dynamics to the initial condition and the high dimensionality of the state to be estimated. There are two popular and competing methodologies for data assimilation: the Bayesian filtering approach (e.g., \cite{Kal60, LSZbook2015}) and the variational approach (e.g., \cite{TK97,LSZbook2015}).  Considering the data assimilation problem within the framework of Bayesian statistics stems from the fact that the underlying model and/or the observational data are uncertain, and Bayesian filtering aims to update the (filtering) distribution of the system state using generally noisy and partial empirical data. This process may be performed exactly for linear systems subject to Gaussian noise where the solution is given by the Kalman filter~\cite{Kal60}. For the case of nonlinear and non-Gaussian scenarios, the majority of Bayesian DA  algorithms implemented in practice rely on ad hoc simplifications which are driven by  both physical insights and computational expediency. Many of these  methods  invoke some form of ad hoc Gaussian approximation.    However,  theoretical understanding of the ability of such methods to accurately and reliably estimate the state variables is under-developed. Thus the development of practical filtering algorithms for high-dimensional dynamical systems is an active research area and for further insight into this subject we refer the reader to, for example,  \cite{BCJ11, ChMT10, Evens09, HM08, TK97,PVL09,PVL10}.

In this work, we consider two classes of approximate Gaussian DA algorithms, namely the so-called {\em 3DVar filter,} the ensemble based {\em Ensemble Kalman Filter} (EnKF) and its close variant, the {\em Ensemble Square Root Kalman Filter} (EnSRKF),  together with the purely control-theoretic method for state estimation of nonlinear, infinite-dimensional dissipative dynamics referred to as  {\em Nudging}. The simplest such algorithm 
referred to as 3DVar for historical reasons enforces a Gaussian filtering distribution with a constant covariance, effectively  neglecting the propagation of the uncertainty  in the produced estimates.   The 3DVar filter has its origin in weather forecasting \cite{Lor86} and is prototypical of more sophisticated filters used today in the nonlinear and non-Gaussian setting, including EnKF and EnSRKF.  Similar to the Kalman filter, which can be derived from the control-theoretic viewpoint as the exact solution of the associated linear-quadratic minimization problem,  3DVar  may also be viewed within the framework of nonlinear control theory and thereby be derived directly, without reference to the Bayesian probabilistic interpretation. In fact,  this is primarily how  approximate Gaussian filtering algorithms were conceived.  In particular, 3DVar  turns out to  share many common features with the recently developed approach  connected to ideas from control theory (e.g. Luenberger Observer Theory \cite{Luenberger1971,Nijmeijer2001,Thau1973}) which is  referred to hereafter as {\it nudging}. Nudging has been proposed in \cite{AOT}, and subsequently applied to many different settings including  localized and mobile data; see for instance \cite{AlbanezNussenzveigLopesTiti2016, BBJ2020, CLT, carlson2018,  CHLM, FGMW, FLT,FoiasMondainiTiti2016,FLV} and the extensive references therein.  The nudging  DA algorithm relies on  
the fact that dissipative dynamical systems possess finitely many {\em determining parameters}\footnote{\,In the infinite-dimensional or PDE case such as determining modes, nodes and local volume averages, see, for example, \cite{CJT97,FMRT01, FT84,FTT91, HT97, JT92, JT93} and references therein.} and   introduces a feedback control term that forces the approximating solution obtained by data assimilation towards the reference solution that is only accessible through finite-dimensional observations (determining parameters);  generalization of this framework to account for noisy observations was developed in \cite{BOT}.  While 3DVar and nudging are structurally similar, their derivation provides a complimentary point of view and insight into  the fundamental property of  observations that is necessary (though not sufficient) for the utility of all the aforementioned DA algorithms.

In  general, a Bayesian DA procedure (including 3DVar, EnKF, and EnSRKF)  computes an update which is a compromise between the model predictions and the data. In this framework, two techniques have been empirically known to improve the performance of Bayesian DA especially in the high-dimensional setting.   {\it  Covariance inflation} is a technique of adding stability to a Bayesian DA algorithm by increasing the size of the model covariance in order to weight the empirical data more heavily. This model-data compromise is weighted by the covariance on the model and the fixed noise covariance on the data. The underlying dynamics and its model typically allow for unstable divergence of trajectories and the potential amplification of errors in the model prediction, whilst the empirical data tends to stabilize the outcome be it at the expense of the corrupting observation noise.  {\it  Covariance localization} implies the reduction of the bandwidth and the rank of the model covariance operator. In practice covariance localization  relies on removing correlations involving components of the state that are deemed to introduce spurious instabilities in the DA estimates. This generally ad hoc procedure is intimately related and constrained by the need to retain correlations between the `unstable' degrees of freedom in the state space so that  {\it assimilation in the unstable subspace} is retained, while spurious correlations between the observational noise the observed unstable directions and the unobserved stable directions are removed.  For practical considerations and implementations of the inflation and localization procedures we refer the reader to, for example, \cite{bishop07,anderson12,menetrier15,roh15,bolin16,smith18,yoshida18,gharamti18,lopez21} but  we stress that there exists a vast literature devoted to these issues which we do not mention here. We provide an important theoretical insight into this issue in the sequel; see Section \ref{secn:cov_localisation_inflation} for more details.

Finally, in contrast to 3DVar and the nudging DA, the {\it ensemble} DA methods attempt to propagate the uncertainty in the estimates (at a low computational cost) based on the  empirical covariance calculated from the propagated ensemble of dynamically coupled state estimates.  Despite the questionable utility of the covariance information evolved by the ensemble-based filters for uncertainty quantification \cite{LS12}, such methods have been known to perform well in practice at providing reliable point estimates in high-dimensional systems even for relatively small ensembles.  However, from the  theoretical perspective, these ensemble-based methods are poorly understood, with the exception of a recently established but still incomplete nonlinear stability theory \cite{TMK15, TMK16,deVRS18,deVT20} that highlights, in particular, the importance of the nature of observations and appropriately dealing with the unobserved components of the underlying dynamics. Moreover, recent numerical and theoretical studies of catastrophic filter divergence \cite{MH2010, GM2013, KMT15} have indicated that stability and accuracy of ensemble based DA methods is a genuine mathematical concern and cannot be taken for granted in practical implementations.

\subsection{Main Contributions.} In what follows we study,  within a unified framework, (i) accuracy and stability of the 3DVar filter for various physically relevant types of observations, and  (ii) accuracy and stability of two ensemble-based DA methods, EnKF and EnSRKF. The term accuracy refers to establishing closeness of the filter estimates to the true signal underlying the observed data, and stability is concerned with the distance between two filter estimates, initialized differently, but driven by the same noisy data. We focus on dissipative dynamical systems, and take the two-dimensional  Navier-Stokes equations as a prototype model in this area which is known to be globally well-posed. To the best of our knowledge, for the first time, we derive accuracy and stability estimates for the EnKF and EnSRKF for general partial observations (i.e.,~when the observation operator has finite rank so that {\em finitely many} modes or volume elements are observed). In our work, we explicate the critical role played by a certain non-trivial `cancellation property' of the trace of a term involving  the ensemble covariance and observation operators. This cancellation property leads to the presence of a negative-definite term in the dynamics of the estimation error and it can be exploited to control the error in the estimates via {\it covariance localization} and {\it additive covariance inflation} provided that the observations are sufficiently `rich' (according to a prescribed criterion); this is achieved by  removing spurious correlations introduced by the ensemble covariance operator between the coarse (observed) and fine (unobserved but `stable') scales of the flow, and making the localized covariance sufficiently dinagonally-dominant. Although the use of covariance localization and inflation has been studied in applied literature (e.g., \cite{bishop07,anderson12,menetrier15,roh15,bolin16,smith18,yoshida18,gharamti18,lopez21}), their  theoretical importance in the stability and accuracy of EnKF and EnSRKF remained unclear to this point.

The most relevant papers from the point of view of our work are those concerned with the theoretical analysis of 3DVar and EnKF/EnSRKF filters. Papers \cite{BLSZ2013, BLLMSS11} describe a theoretical analysis of 3DVar applied to the Navier-Stokes equation when the data arrives in both discrete time \cite{BLLMSS11}, and  in the continuous time \cite{BLSZ2013}; both papers only allow for the possibility of partial observations in Fourier space.  In what follows we allow for much broader types of observations, including volume-averaged  observations,  and covariance operators which are better  justified and suitable in realistic applications. Moreover, this extended analysis paves the way to the core of our  study focussed on  the ensemble-based DA methods, EnKF and EnSRKF, in the infinite-dimensional setting. Both these filters, as well as numerous variations, have been extensively used and studied. The properties of EnKF in the finite-dimensional setting and based on  observations of the full state  were discussed in \cite{ deVRS18,deVT20}, while the well-posedness of EnKF  applied to the Navier-Stokes equation (and for observations in the spectral domain) was discussed in \cite{KLS14}. Nonlinear stability of EnKF and EnSRKF in the finite-dimensional setting was analysed in \cite{TMK15, TMK16}, where the role of covariance inflation was highlighted as crucial for attaining the stability.  However,   these works did not address the  crucial issue of accuracy of  EnKF/EnSRKF, except in \cite{KLS14}, where this issue is addressed only when the full system is observed in Fourier space. It is also worth noting that there is a potential inconsistency in the results of \cite{KLS14} due to assumptions which are incompatible in the infinite-dimensional setting, since the noise covariance in the observations is assumed to be the identity on the state space (i.e., it is not trace class) and  the filter estimates do not have finite energy. Moreover, the role of covariance inflation and localization that has been well known to improve the performance of these ensemble-based filters in practice, remained unclear until now. As a result of the analysis,  we find explicit conditions on the (spatial or spectral) resolution of the observations and the norm and structure of the relevant covariance operators  for the two-dimensional Navier-Stokes equations which guarantees that the resulting approximate  solution converges (in an appropriate sense) to the exact reference solution within an error that is determined by the observation density and the noise amplitude in the observations. In particular, for the first time, we show the importance of covariance localization for accuracy and stability of EnKF/EnSRKF which hitherto was unrecognized in the analysis of filter accuracy and stability; see Section \ref{secn:cov_localisation_inflation} for a detailed discussion.

Our analysis is motivated by the theory developed in \cite{OT11, HOT11, BOT}  which are the first papers to study data assimilation directly through PDE analysis, using ideas from the theory of determining modes in infinite-dimensional dynamical systems.  However, in contrast to those papers, we start from the DA algorithms derived within the Bayesian (and not control-theoretic) setting which allow us to consider a range of approximate Gaussian algorithms, including 3DVar and EnKF/EnSRKF, within the same framework and with systematically derived structure of feedback/control terms which turns out to be crucial in the subsequent analysis.   We make use of the squeezing property of many dissipative dynamical systems \cite{CF88, Tem97}, including the Navier-Stokes equation, which drives many theoretical results in this area.   The current paper provides a generalization of the previous results concerned with 3DVar in the infinite-dimensional setting derived in \cite{BLSZ2013} to allow for physically relevant observations and a general background covariance, and it presents completely new and long-overdue  results concerned with the accuracy and stability  of EnKF and EnSRKF. Moreover, by considering partial noisy observations we provide theoretical insight into  the importance of covariance localization and inflation in DA and the machanism through which these methods can induce accuracy and stability of the DA methods. 
 
 Following \cite{BLSZ2013, KLS14} and working in the limit of high frequency observations we formally introduce continuous-time filters for 3DVar and EnKF; we also derive an analogous continuous-time equation for EnSRKF.  The resulting  stochastic differential equations for state estimation in each of these distinct cases combine the original dynamics with extra terms including mean reversion to the noisily observed signal. 
 In the particular case of the Navier-Stokes equation our continuous time limits give rise to a stochastic partial differential equations (SPDE) with an additional mean-reversion term, driven by spatially-correlated time-white noise. These SPDEs corresponding to the different filters  are  central to our analysis and their properties are used to prove accuracy and stability results of the underlying filters.   We postpone the consideration of discrete-time setting and a rigorous passage to the continuum limit, as well as {\em nodal observations}, i.e., data obtained on finitely many points on a grid in the domain and referred to as Type\,-2 observations in \cite{AOT}, to a future work.

 Finally, we note that a similar analysis of the three-dimensional Navier-Stokes equations is obstructed by the problem of global regularity of solutions; however, it should be possible to treat three-dimensional turbulence models and other dissipative dynamical systems that are more regular using an analysis similar to what we present here. 

\section{Lay outline of main results}

In what follows we consider the accuracy of several DA algorithms whose estimates $\bm(t)$ are constructed from the model and noisy observations  to approximate, in a sense outlined below, the solutions of the underlying truth dynamics $\bu(t)$ contained in some appropriate Hilbert space. We consider the following necessary characteristics of a DA algorithm 
\be  \label{goodfilter_intro}
\left.
 \begin{array}{l}
\text{(i) Filter Accuracy:}\ \limsup_{t \ra \infty} \E \|\bm (t) - \bu(t)\|^2  \lesssim\mathfrak{C}\hspace{.03cm}(\sigma^2),\\[.5cm]
\text{(ii) Filter Stability:}\ 
\E \|\bm^{(1)}(t) - \bm^{(2)}(t)\|^2 \lesssim 
e^{-kt}\E\|\bm^{(1)}(0)-\bm^{(2)}(0)\|^2\\[.3cm]
\qquad \qquad \qquad \qquad \qquad \qquad \qquad \qquad \qquad + \sup_t \E \|\co(\bu^{(1)}(t)-\bu^{(2)}(t))\|^2+\mathfrak{C}\hspace{.03cm}(\sigma^2)
\end{array}
\right\},
\ee
where $\|\cdot\|$ is a norm on an appropriate Hilbert space,  $\bm^{(i)}(\cdot)$, $i=1,2$ are two DA estimates with initial data (distribution) $\bm^{(i)}(0)$,  $\bu^{(i)}(t)$ the reference solutions, and  $\mathfrak{C}\hspace{.03cm}(\sigma^2)\rightarrow 0$ as $\sigma\rightarrow 0$, where $\sigma>0$ denotes the amplitude of the observational noise.

We show that for the reference/truth dynamics given by the (incompressible) two-dimensional Navier-Stokes equation, which is a generic example of a dissipative dynamics of an incompressible fluid the three classes of approximate Gaussian filters (3DVar, EnKF and EnSRKF) are accurate and stable for sufficiently spatially resolved observations and appropriately inflated and localized model covariance.  

In Section \ref{acc_3dvar} we consider the properties of the general 3DVar algorithm in continuous time with observations given either by {\it volume} measurements (i.e., measurements in the physical domain averaged over finite-volume sets) or modal measurements (i.e., measurements of coefficients in an appropriate spectral basis). In Theorem \ref{3dv_acc_2}, Corollary~\ref{3dv_acc_3} and Theorem \ref{3dv_stab_1} we show that for a fairly general (finite-rank) observation operator  the 3DVar  algorithm is accurate and stable in the sense of (\ref{goodfilter_intro}). While 3DVar can be accurate for sufficiently well-resolved observations (Theorem~\ref{3dv_acc_2}), the bounds on its accuracy are improved with appropriate covariance inflation and/or localization Corollary~\ref{3dv_acc_3}. 

In Section \ref{enkf_sec} we consider the properties of continuous-time EnKF and EnSRKF algorithms with the same type of observations as those in 3DVar; namely observations given either by {\it volume} measurements or modal measurements. It turns out that EnKF and EnSRKF cannot be guaranteed to be accurate without an appropriate covariance inflation and localization. In Theorem \ref{enkf_acc_thm} and Theorem \ref{ensrkf_acc_thm} we show for the first time that, respectively, EnKF and EnSRKF are accurate for sufficiently well-resolved observations, sufficiently large additive inflation of the state-dependent covariance which is localized by projecting away terms involving the unresolved degrees of freedom. 

In all cases we point out the possibility of a potential improvement of the asymptotic bounds by a better informed choice of the prescribed `background' covariance (3DVar) or a bespoke covariance inflation in EnKF and EnSRKF; this could, in principle, be achieved via machine learning techniques and we postpone such considerations to future work. 

\section{Mathematical Preliminaries}
\subsection{Approximate Gaussian filtering algorithms in discrete time.}  \label{filtering}
We are given a continuous dynamical system $\big(\Psi_t,H\big)$, where the (flow) map $\Psi_t$ for  ${t\in  [t_0,\,T)}$  on a (Borel measurable) phase space $H$ is induced by the differential equation 
\begin{align}\label{gen_ode_ds}
\frac{\dr}{\dr t} \bu(t)=F(\bu),\qquad \bu(t_0)=\bu_{0} \overset{d}{\sim} \mu_{0}, 
\end{align}
where\,\footnote{\,Strictly speaking $F\,: H_0\subseteq H\rightarrow T_{(\cdot)}H$, where $T_\bu H$ is the tangent space at $\bu\in H_0$.} $F\,: H_0\subseteq H\rightarrow H$ is such that global solutions exist on $[t_0,\,T]$, and the probability distribution $\mu_0\in \pp(H)$ on the initial condition $\bu(t_0)=\bu_0\in H_0$ is unknown. We are provided a (possibly noisy) sequence of observations 
$\{\by(t_j)\}_{j \in \N}$, $t_1<t_2 <\cdots $ of the system. 
More specifically, the realization of (noisy) observations is  given by
 \begin{align}\label{obs}
 \by_j=\co \bu(t_j)+\pmb{\xi}_j,\quad \pmb{\xi}_j = \Xi_j(\omega), \quad {\Xi}_j\sim\Nc(0,\Gamma_j),
\end{align}
where $\by_j\equiv\by(t_j)$,  $\co:H \ra \cal H_\co$ is a linear `observation'  operator from the phase space to a finite dimensional Hilbert space $\cal H_\co$ termed {\it observation space}, and $\big\{\Xi_j\big\}_{j\in \mathbb{N}}$, $\Xi_j\in \cal H_\co$ is the sequence  of independent, Gaussian--distributed random variables that is  independent of the process ${\bu(t),\,t\in [t_0,\,T]}$. The goal in filtering is to obtain the conditional (or {\it filtering}) probability distribution $\Pb (\bu_j|\Yc_j)$, where $\bu_j=\bu(t_j)$, $ \Yc_j=\{\by_i\}_{i=1}^j$ is the given data, since the conditional expectation satisfies the optimality property
 \[
 \E[\bu_j|\Yc_j]=\int \bu_j \,\Pb (\dr\bu_j|\Yc_j)= \inf_{g}\,\E\|\bu_j - g(\Yc_j)\|^2,\qquad  g:\cal H_\co^{\otimes j} \ra H, \quad g \;\; \mbox{measurable}. 
 \]
 The existence of  a regular version of such a distribution, and its representation by an abstract version of Bayes' formula was derived early on in \cite{kalinpur}.
 Assume now that the dynamical system induced by (\ref{gen_ode_ds}) is \underline{autonomous} and \underline{\emph{linear}}, i.e.,  $\Psi_{t}=\Psi$ and  
 $\bu_{j+1}=\Psi(\bu_j)=M\bu_j$ for a bounded linear operator  $M: H \ra H$. An elementary, but fundamental observation in this case is the following: if $\bv \sim \Nc(\bar{\bv}, \Sigma)$, then $M\bv \sim \Nc (M\bar{\bv}, M\Sigma M^*)$, where $M^*$ denotes the adjoint of $M$.
 Thus, assuming  the initial distribution $\bu_0 \sim \Nc(\bm_0, C_0)$, it follows that $\bu_j|\Yc_j$ and $\bu_{j+1}|\Yc_j$ are both Gaussian, with respective means denoted by $\bm_j$ and $\what{\bm}_{j+1}$ and respective covariance matrices denoted by $C_j$ and $\widehat{C}_{j+1}$, i.e.,
 $\bu_j|\Yc_j \sim \Nc(\bm_j,C_j)$ and $\bu_{j+1}|\Yc_j \sim \Nc(\what{\bm}_{j+1}, \what{C}_{j+1})$. Using the facts that $\Xi_j \sim \Nc(\mbf 0, \Gamma)$ and $\bu_{j+1}$ is independent of $\{\Xi_j\}_{j\in \mathbb{N}}$ 
 one readily obtains through the Bayes' formula \cite{kalinpur}  that 
   \be  \label{opt}
 \bm_{j+1}= \underset{\bv \in H}{\rm arg min} \;
 \underbrace{\frac12 \|\Gamma^{-\frac12}(\by_{j+1}-\co\bv)\|^2}_{\text{fit to data}}
 + \ \underbrace{\frac12 \|\what{C}_{j+1}^{-\frac12}(\bv -  \what{\bm}_{j})\|^2}_{\text{fit to model}},
 \ee
 where 
 \begin{equation}
 \what{\bm}_{j+1}=M\bm_j,  \quad \what{C}_{j+1}=MC_jM^*,
 \end{equation}
and $\|C^{-\frac{1}{2}}(\,\cdot\,) \|^2= (\,\cdot\,)^*\,C^{-1}(\,\cdot\,)$. In this setup, {\it covariance inflation} corresponds to increasing the norm of $\widehat C$ which results weighting the estimate towards data; the simplest implementation of such a procedure relies either on an additive inflation (i.e., $\widehat C\mapsto \mu I+\widehat C$, $\mu>0$), or on multiplicative inflation (i.e., $\widehat C\mapsto \mu\widehat C$). Note that one could consider a more elaborate inflation which relies, for example, on replacing $\mu I$ with some bespoke positive definite operator; such a procedure could be carried out, for example,   via machine learning techniques often with a remarkable success \cite{PAS}. {\it Covariance localization} is achieved via `projecting out' correlations between some a priori  selected degrees of freedom; i.e., one takes $\widehat C\mapsto P\widehat CP$, where $P$ is an appropriate projection operator.  

The quadratic minimization problem in (\ref{opt}) has an analytical solution (see, e.g.,  \cite{astrom70, LSZbook2015}) which leads to the following iterative formula for updating $m_j$ and $C_{j}$:
  \begin{align}\label{kalman:state}
 \bm_{j+1}= \what{\bm}_{j+1}+\mathcal{K}_{j+1}(\by_{j+1}- \co \what{\bm}_{j+1}),
 \quad C_{j+1}=(I-\mathcal{K}_{j+1}\co)\what{C}_{j+1},
 \end{align}
 where the so-called {\it Kalman gain} is given by
\begin{align}
\mathcal{K}_{j+1}=\what{C}_{j+1}\co^*\left(\co \what{C}_{j+1}\co^*+ \Gamma\right)^{-1}.
 \end{align}
Note that the above solutions still hold for non-autonomous, affine dynamics and time-dependent observation operator $\mathcal{O}_j$ and $\Gamma_j$ with obvious modifications (as long as the unique, global solutions of (\ref{gen_ode_ds}) exist). Throughout,  we consider the autonomous setup in order to simplify the presentation.

\medskip
 When the evolution map $\Psi$ for $\bu$ is nonlinear, then the conditional  distribution $\bu_j|\Yc_j$ is no longer necessarily Gaussian and  cannot in general be obtained in this manner. In such a  case, \eqref{opt} is taken as a starting point with a suitable choice of $\widehat m_j$ and $\what{C}_j$ parameterizing a Gaussian approximation of the prior $\Pb(\bu_{j+1}|\Yc_{j})$, which  leads to a class of ad-hoc (approximate Gaussian) filters (see, e.g., \cite{BM14, LSZbook2015}) that no longer possess the optimality property. Most notable filters in this family are:
 \begin{itemize}[leftmargin = .9cm]
 \item[(i)] 
 The  3DVar filter which corresponds to setting 
 \begin{equation}
 \widehat \bm_{j+1} = \Psi(\bm_j), \qquad \what{C}_{j+1}=\widehat{C}(t_{j+1}),
 \end{equation}
 where $\Psi: H\rightarrow H$ is the flow map induced by (\ref{gen_ode_ds}) and $\what{C}( \cdot)$ is an a priori  \emph{prescribed} continuous function. This filter is referred to hereafter as the 3DVar for historical reasons that originate in atmospheric sciences.  As remarked above, the dynamics and the observations can be time dependent which implies $\Psi = \Psi_{j}$, $\Sigma = \Sigma_j$, $\co = \co_j$, $\Gamma = \Gamma_j$.

\smallskip 
\item[(ii)]
 The Extended Kalman Filter (ExKF) where 
 \begin{equation}
 \widehat \bm_{j+1} = \Psi(\bm_j), \qquad \what{C}_j=\nabla \Psi (\bm_j){C}_j(\nabla \Psi(\bm_j))^*+\Sigma.
 \end{equation}

 \smallskip
 \item[(iii)]  The {\it perturbed observations} Ensemble Kalman Filter (EnKF), where $\what{C}_{j+1}$ is computed empirically from an ensemble of solutions $\big\{\hbmk_{j+1}\big\}_{k=1}^K$ such that 
 \begin{align}
 \hbmk_{j+1} &= \Psi( \bmk_{j} ), \qquad  \bar{\hbm}_{j+1}= \frac{1}{K} {\textstyle \sum_{k=1}^K \hbmk_{j+1}},\\[.16cm]
 \widehat C_{j+1} &= \widehat C_{j+1}\Big(\{\hbmk_{j+1}\}_{k=1}^K\Big)=\frac{1}{K} {\textstyle \sum_{k=1}^K} (\hbmk_{j+1}) - \bar{\hbm}_{j+1})\otimes (\hbmk_{j+1} - \bar{\hbm}_{j+1}),\\[.5cm]
\bmk_{j+1} &=  \hbmk_{j+1} +\mathcal{K}_{j+1}\big({\bf y}^{(k)}_{j+1}-\mathcal{O} \,\hbmk_{j+1} \big), \mathcal{K}_{j+1} = \widehat C_{j+1}\mathcal{O}^*(\mathcal{O} \widehat C_{j+1}\mathcal{O}^*+\Gamma)^{-1},
 \end{align}
 where $\big\{{\bf y}_{j}^{(k)}\big\}_{k=1}^K$, ${\bf y}^{(k)}_{j }= {\bf y}_{j}+\Gamma^{1/2}\xi^{(k)}_j$, ${\bf y}_j = \co \bu_j+\pmb{\xi}_j$ with $\pmb{\xi}^{(k)}_j,\, \pmb{\xi}_j$ representing independent draws from $\Nc(0,\Gamma)$.

  \medskip
 \item[(iv)] The Ensemble Square-Root Kalman Filter (EnSRKF)
 \begin{align}
 \hbmk_{j+1} &= \Psi( \bmk_{j} ), \qquad  \bar{\hbm}_{j+1}= \frac{1}{K} {\textstyle \sum_{k=1}^K \hbmk_{j+1}}\label{ensrkf_j_beg},\\[.16cm]
 \widehat C_{j+1} &= \widehat C_{j+1}\Big(\{\hbmk_{j+1}\}_{k=1}^K\Big)=\frac{1}{K} {\textstyle \sum_{k=1}^K} (\hbmk_{j+1}) - \bar{\hbm}_{j+1})\otimes (\hbmk_{j+1} - \bar{\hbm}_{j+1}),\\[.5cm]
\bar \bm_{j+1} &=  \bar{\hbm}_{j+1} +\mathcal{K}_{j+1}\big(\by_{j+1}-\mathcal{O} \,\bar{\hbm}_{j+1} \big), \qquad \mathcal{K}_{j+1} = \widehat C_{j+1}\mathcal{O}^*(\mathcal{O} \widehat C_{j+1}\mathcal{O}^*+\Gamma)^{-1},\\[.2cm]
 \bmk_{j+1} &=  \bar{\bm}_{j+1}+S^{(k)}_{j+1},\label{ensrkf_j_end}
 \end{align}
 where $S^{(k)}_{j+1}$ is the $k$-th column of  $S_{j+1} = \widehat S_{j+1}T_{j+1}$ with 
 \begin{align}
 \widehat S_{j+1} = \big[ \hbm^{(1)}_{j+1}- \bar{\hbm}_{j+1}, \dots,  \hbm^{(K)}_{j+1}- \bar{\hbm}_{j+1}\big],
 \end{align}
 and $T_{j+1}$ satisfies 
 \begin{align}
 \widehat S_{j+1}T_{j+1}\big(\widehat S_{j+1}T_{j+1} \big)^*=  K\big(I-\widehat C_{j+1}\mathcal{O}^*(\mathcal{O}\widehat C_{j+1}\mathcal{O}^*+\Gamma)^{-1}\mathcal{O}\big)\widehat C_{j+1} = K(I-\mathcal{K}_{j+1}\co)\widehat C_{j+1}.
 \end{align}

 \noindent By construction, the posterior covariance $C_{j+1} = \frac{1}{K}S_{j+1}S_{j+1}^*$ has the same form as  that in  the Kalman filter.  
 The standard choice of $T_{j}$ is to take it  in  the `square-root' form 
 \begin{align}
 T_{j} = \left(I+\frac{1}{K}\widehat S_j^*\mathcal{O}^*\mathcal{O}\widehat S_j\right)^{-1/2} = \left(I-\frac{1}{K}\widehat S_j^*\mathcal{O}^*(\mathcal{O}\widehat C_{j}\mathcal{O}^*+\Gamma)^{-1}\mathcal{O}\widehat S_j\right)^{1/2}.
 \end{align}
 \end{itemize}
 As in the case of linear Gaussian dynamics, the above filters (i)--(v) can be considered  for time-dependent observation operator $\mathcal{O}_j$ and $\Gamma_j$ with straightforward modifications.

 \medskip
 \subsection{Continuous time limit of approximate Gaussian filter equations}\label{cont_approx_G_list}

 \smallskip
 In order to obtain a continuous time limit (i.e., $t_{j+1}-t_j \ra 0$) of 3DVar, EnKF and EnSRKF (see (i), (iii)--(v) in \S\ref{filtering}), so that the update formulas then correspond to time discretization of an appropriate SDE, 
 one chooses the scaling $\Gamma= h^{-1}\Gamma_0$, where $h = t_{j+1}-t_j$. 
 
 \smallskip
 In the case of 3DVar  choosing, {\em for simplicity}, $\Gamma_0=\sigma^2I$ (i.e., uncorrelated observational noise) leads to a stochastic ODE/PDE for the mean 
 $\bm(\cdot)\in H$ of the form \cite{BLSZ2013}
 
 \be  \label{3dvareq}
 \dr \bm = \big\{F(\bm)- \sigma^{-2}C \co^* \co (\bm - \bu)\big\}\dr t  + \sigma^{-1} C\co^* \dr \bW_t\,,
 \ee
where the covariance operator $C$ with the range  in $H$ is prescribed a priori and is continuous in time, $\bW_t$ is the 
Wiener process on the Hilbert space $H_\co$ (the observation space). Note that an analogous procedure for linear dynamics in $F$ leads to the Kalman-Bucy filter equations \cite{lipster01} where the evolution of $C= C(t)$ is systematically prescribed.
The stochastic calculus of Hilbert-space-valued semimartingales is developed pedagogically in \cite{daprato92}. 

\smallskip

In the case of EnKF, in each step one needs to generate an ensemble $\{\bmk(t)\}_{k=1}^K$, and proceeding similarly to \cite{KLS2014}, 
 one arrives at \eqref{3dvareq} for each ensemble member $\bmk$ (i.e., replace $\bm$ with $\bmk$ in \eqref{3dvareq}), where 
 \be  \label{enkfcov}
 C(\bm)\equiv C(\{\bmk\}_{k=1}^K)=\frac{1}{K} \sum_{k=1}^K (\bmk) - \bar{\bm})\otimes (\bmk - \bar{\bm}),\;\; \mbox{where}\; 
{  \bar{\bm}= \frac1K \sum_{k=1}^K \bmk}
 \ee
 so that we (formally) obtain 
 \begin{align}\label{enkf_t}
\dr \bmk = \big\{F(\bmk)- \sigma^{-2}\hspace{.04cm}C(\bm) \co^* \co (\bmk - \bu)\big\}\dr t+ \sigma^{-1}\hspace{.04cm}C(\bm)\co^* (\dr \bW_t+\dr {\bf B}^{(k)}_t), 
\end{align}
where  $\bmk(0)=\bmk_{0}, \;\;k=1, \cdots ,K$, and $\bW_t^{(k)}, {\bf B}_t$ are pairwise independent Wiener processes in the observation space $H_\co$. 
 
 \medskip
 In the case of EnSRKF a procedure analogous to the one above (and in  \cite{KLS2014})  yields 
 \begin{align}  \label{ensrkf_t}
\textstyle \dr \bmk &= \Big\{F(\bmk)- \frac{1}{2}\sigma^{-2}\hspace{.04cm}C(\bm) \co^* \co \big( \bmk-\bu\big)-\frac{1}{2}\sigma^{-2}\hspace{.04cm}C(\bm) \co^* \co \big(\bar\bm-\bu\big) )\Big\}\dr t\notag\\[.2cm]&\hspace{10cm}+ \sigma^{-1}\hspace{.04cm}C(\bm)\co^* \dr \bW_t
 \end{align}
with  ${\bf W}_t$ the Wiener processes in the observation space $\cal H_\co$.  Details of the formal derivation are included in Appendix \ref{ensqkf_app}.
 
 \begin{remark}\rm
 Note that, in contrast to the 3DVar algorithm, the covariance operator $C$ in (\ref{ensrkf_t}) is not arbitrarily prescribed and it is state-dependent.  This fact introduces non-trivial differences in the ensemble-based algorithms and the technical differences arising through this state-dependence eluded, until now, a rigorous treatment of the accuracy of such algorithms. We discuss the accuracy of the above ensemble-based algorithms in \S\ref{enkf_sec}.
 \end{remark}

 The standard `nudging'  method, without any observational noise,  yields equation for estimate of the state  $\bm(\cdot)\in H$ on a suitable Hilbert space $H$  in the control-theoretic form~\cite{AOT}
 \be  \label{nudg}
 \dr \bm = \big\{F(\bm)- \mu \,\mathcal{I}(\bm - \bu)\big\}\dr t , 
 \ee
where $F$ has the range in $H$ (as in (\ref{gen_ode_ds})), $\mu>0$ is called the {\it nudging parameter},  $\mathcal{I}= \J\circ\co$, $\co: H\rightarrow \cal H_\co$ has finite rank, and  $\mathfrak{I}:\, \cal H_\co\rightarrow H $ is a linear {\it interpolant} operator 
of the observations. In what  follows we will set $\mathfrak{I} = \co^*$ so that $\mathcal{I} = \co^*\co$.  The role of the control term aimed at nudging the estimates towards $\bu$ is clear. In what follows the observation operator will depend on the parameter $h>0$ denoting the `resolution' of the observations either in the physical or the spectral domain, and $\I_h$  will be associated with the $h$-dependent observations. 

\medskip
The nudging algorithm with observational noise is given by \cite{BOT}
 \be  \label{nudg_W}
 \dr \bm = \big\{F(\bm)- \mu \,\mathcal{I} (\bm - \bu)\big\}\dr t +\mu\hspace{.04cm}\sigma\hspace{.03cm}\dr \bW_t, \quad \mu>0,
 \ee
 where the $Q$-Wiener process $\bW_t$ is in the range of $\mathcal{I}$ and $\sigma\geqslant 0$ is the amplitude of the observation noise. 
 
 \smallskip
 As can be seen above, the equations of the (control-theoretic) nudging algorithms  (\ref{nudg_W}) can be written in the same form as the equations of the approximate Gaussian filters \eqref{3dvareq}, \eqref{enkf_t} and \eqref{ensrkf_t}, subject to some important scaling differences. Within this unified framework specific forms of the involved operators will lead to important differences between  these algorithms.

\begin{remark}\rm
Note that for $C = \mathfrak{c} I$, $\mathfrak{c}>0$ the drift and diffusion in (\ref{nudg_W}) and (\ref{3dvareq}) have the same scaling w.r.t., respectively,  the nudging parameter $\mu$ and the covariance  amplitude~$\mathfrak{c}$, and these two parameters play an equivalent role; in particular, the form of the nudging equations with noisy observations coincides with the restriction of the 3DVar equations to the case of a diagonal covariance.  For a general $C$ in 3DVar one has, in principle, more freedom in choosing the optimal control operator compared to the nudging.  While the ad-hoc structure of the control and noise terms in (\ref{nudg_W}) is consistent with the standard control-theoretic framework, the drift and diffusion in (\ref{3dvareq}) arise from the systematic Bayesian formulation of the filtering problem in continuous time~\cite{BLSZ2013}; this fact will have important consequences later on, especially on the properties of the Ensemble Kalman Filter.  It is interesting to note the scaling w.r.t.~the (prescribed a priori) noise amplitude $\sigma$ implies that the dynamics of continuous-time 3DVar is singular at $\sigma=0$; this behaviour resembles that of the Kalman-Bucy filter (see, e.g., \cite{lipster01}) and it is absent in the discrete time formulation~(\ref{kalman:state}). We postpone the analysis of this singularity to future work. 
\end{remark}

In the sequel we will be concerned with the  interplay between the performance of the filtering and the properties of both the observation operator $\co$ and the covariance $C$. In particular, the issue of the so-called covariance localization will be of specific interest. Moreover, later on we will consider configurations in which the observation and covariance operators are time dependent. We will argue later that the while time-dependence of the covariance in the EnKF case is imposed by the algorithm from the outset, the analysis can be reduced, under certain assumptions, to the 3DVar case. In applications, we will assume that $\co$ is of finite rank, in which case no special assumptions need be made on the observational noise process. In case the range of $\co$ is infinite dimensional, for instance if one observes all modes as in \cite{BLSZ2013, KLS2014}, we will impose the additional restriction that the noise covariance $Q$ is trace class. Note however, that the setup where all modes are observed is not a realistic one and we do not specifically focus on such a configuration.

 \subsection{Observation operators and interpolants}\label{sect:interpolantops}

In this section, we will give specific examples of observation operators occurring in Section \ref{filtering} which are important  in geophysical applications.  
\begin{itemize}[leftmargin = 0.8cm]
\item[(1)] \underline{\it Type\,-1 observation operator}\,: Abstractly, we will assume that  Type\,-1 observation operators~${\co}$ are bounded, linear operators from the (Hilbert) state  space $H$ to some  Hilbert space $H_\co$ ({\it observation space}). Thus,
$\co: H \ra H_\co$ satisfies $\|\co \bu\|_{H_\co} \le \mathfrak{C}\, \|\bu\|_H$.
 The corresponding {\em Type\,-1  interpolant} is given by  $\mathfrak{I} = \co^*$ and,  due to its occurrence and importance in (\ref{nudg}), \eqref{3dvareq}, (\ref{enkf_t}), and (\ref{ensrkf_t}), we will henceforth introduce the operator  $\I = \co^*\co$ to simplify the subsequent notation.
 We also denote the orthogonal projection  $\mathcal{P}_{\K^\perp}$ onto the complement of $\K:= \text{ker}\, \I=\text{ker}\, \co$, where $\I = \mathfrak{I}\circ \co = \co^*\co$.  Accuracy and stability of the approximate Gaussian filters with $\co$ of Type-\,1 is associated with  `richness' of observations is expressed through a version of an {\em approximation of identity property} of $\PKp$ in the form 
\begin{equation}
\|\mathcal{P}_{\K^\perp} \bu\|_{L^2}^2 \le   c_1^2\|\bu\|_{L^2}^2\quad  \mbox{and}\quad \|P_\K \bu\|_{L^2}=\Sob{\bu-\mathcal{P}_{\K^\perp}\bu}{L^2}^2\leq  c_2^2 \,\epsilon \,\Sob{\bu}{\h^1}^2	\quad \forall\  \bu \in \h^1, \label{T1_PKp}
\end{equation}
where $\epsilon$ is a suitable smallness parameter. Note that since in our case, $\K^\perp$ is already a closed subspace in $H$, the first inequality above automatically holds with $c_1=1$ but we retain the general formulation. The inequalities in (\ref{T1_PKp}) quantifies the richness of the space $\K^\perp$ which is finite-dimensional and thus isomorphic to $H_\co$. Therefore, the term {\it observation space} will henceforth be used interchangeably for both $H_\co$ and $\K^\perp$.
Examples of observation operators which are important for geophysical applications, and which satisfy \eqref{T1_PKp} for suitable $\epsilon$  are modal and volume observation operators given below. 

\vspace{.2cm}
\noindent In most applications,  $H_\co$ is finite dimensional, and therefore, we can take it to be $\R^N$. Dealing with infinite-dimensional observation space introduces additional technical complications (on the observational noise process) which seem to us to be purely academic and not relevant in practice. 
Therefore, in case  $H_\co$ is finite dimensional, $\co$ has the form 
\[
\co \bu = (\langle \bu, \psi_1\rangle, \cdots , \langle \bu, \psi_N\rangle), \qquad \{\psi_n\}_{n=1}^N \subset H \;\; \ (\text{or}\ \{\psi_n\}_{n=1}^N \subset H_0).
\]

\smallskip
\item[(2)] \underline{\it Type\,-2 observation operator}\,: For such observation operators  $\co: H_0 \subset H \ra H_\co$ where $H_0$ is a dense subspace, compactly embedded in $H$ and $\co$ is a bounded linear operator satisfying
 $\|\co \bu\|_{H_\co} \le \mathfrak{C}\, \|\bu\|_{H_0}$; i.e., it is generally not a bounded operator on $H$. A typical example of a Type\,-2 observation operator is the nodal observation operator  given below. It also satisfies an approximation property similar to the second inequality in \eqref{T1_PKp}.
\end{itemize}

\medskip
\noindent Some concrete examples of observation operators, important in geophysical applications are 
\begin{itemize}[leftmargin=0cm]
\item[]\underline{\it Type\,-1 observation operators}:

\vspace{.2cm}
\begin{itemize}
\item[(i)] {\it Volume observation operator}. 
	Assume that fluid with velocity $\bu$ occupies 
	 a domain $\mathscr{D}$ partitioned into  a (coarse) grid 	$\{\mathscr{D}_n\}_{n=1}^N$ with
	\begin{equation} \label{h_spatial}
	h :=\underset{n\in \{1,2,\dots, N\}}{\max} \,\text{diam}(\mathscr{D}_n).
	\end{equation}  
		Take $\psi_n=\frac{1}{|\mathscr{D}_n|}\chi_{\mathscr{D}_n}$ and note $\{\psi_n\}_{n=1}^N$ is an orthonormal basis set (which depends implicitly on~$h$). Then, the operator $\co= (\co_1, \dots, \co_N)$ such that $\co_n(\bu) = (\bu, \psi_n)=\frac{1}{|\mathscr{D}_n|}\int_{\mathscr{D}_n} \bu(x)\dr x$ is called the {\it volume observation operator} and $\{\co_n\}_{n=1}^N$ are called volume elements  \cite{AOT, JT92}. The {\it volume interpolant} $\mathfrak{I}:\,H_\co\rightarrow H$ is given by the dual of $\co$, i.e.,  $\mathfrak{I}:=\co^*$, and  
\begin{equation}	\label{h_spectral}	
				\textstyle \I_h:=\co^*\co =  \sum_{i=1}^N \psi_n \otimes \psi_n.
\end{equation}

\vspace{.2cm}		
\item[(ii)] {\it Modal observation operator}.  Let $\{\psi_n\}_{n=1}^N$ be an orthonormal basis of the spectral subspace $H_N$ of the Stokes operator defined in Section \ref{sec:nse}. Then, the operator $\co = (\co_1, \dots, \co_N)$ such that $\co_n(\bu) = (\bu, \psi_n)$ is called the {\it modal observation operator}, the modal interpolant is $\mathfrak{I} = \co^*$, and 
\begin{equation}
\textstyle \I_h = \sum_{n=1}^N \psi_n \otimes \psi_n=P_{H_N}, \qquad h = \lambda_N^{-1/2}.
\end{equation}
\end{itemize}
Observe that for both the modal and volume observation operators, $P_{\K^\perp} = \I_h$ where 
$\K = {\rm Kernel}\  \co$ and $\K^\perp = {\rm Span}\ \{\psi_n\}_{n=1}^N$. The above two Type-1 operators satisfy the {\em approximation of the identity property} due to the estimates \cite{AOT}
\be
\textrm{Type\,-1}:\  \|\I_h \bu\|^2_{L^2} = \|\PKp \bu\|^2_{L^2} \le c^2_1\|\bu\|_{L^2},\qquad \Sob{\bu-\I_h\bu}{L^2}^2 = \|P_\K\bu\|_{L^2}^2\leq c_2^2 \,h^2\Sob{\bu}{\h^1}^2	\quad \forall\  \bu \in \h^1, \label{interpolant:est_1}
\ee
where $L^2(\mathscr{D})=W^{0,2}(\mathscr{D})$,  $\mathbb{H}^k(\mathscr{D})=W^{k,2}(\mathscr{D})$ and $W^{k,2}$ is the $(k,p)$\,- Sobolev space with $p=2$, and here,
$c_1,c_2> 0$ are  absolute constants.  Thus, the modal and volume observation operators satisfy \eqref{T1_PKp} with $c_1=1$ and $\epsilon = h^2$.
\vspace{.2cm}	 

\item[] \underline{\it Type\,-2 observation operator}:

\vspace{.2cm}
\begin{itemize}
\item[(iii)] {\it Nodal observation operator}. Let $\{\mathscr{D}_n\}_{n=1}^N$ be as in (i) and let  $\co(\bu)=(\bu_1,\cdots, \bu_N)$,
where  $\bu_n=\bu(x_n),\ x_n \in \Omega_n$. In this case the linear operator $\co$ is called the {\it nodal observation operator} \cite{AOT}, the {\it nodal interpolant}  $\mathfrak{I}: \,(\R^d)^N \rightarrow H$ is $\mathfrak{I} = \co^*$ is given by $\mathfrak{I} \mbf x =\sum_{n=1}^N \mbf x_n \frac{1}{|\Omega_n|} \chi_{\Om_n}$ where $\mbf x =(\mbf x_1, \cdots, \mbf x_N), \mbf x_i \in \R^d, i=1, \cdots , N$.  Thus,
\begin{equation}
	\textstyle   \I_h\bu=\J\circ \co(\bu)=\sum_{n=1}^N \bu_n \frac{1}{|\Omega_n|} \chi_{\Om_n}.
\end{equation}	  
Though arguably the most  important for applications, this type of operator  is also the most difficult to analyze since  $\co$ is not defined as a bounded operator on all of $H$ but rather as an unbounded operator on a dense subspace $H_0=\h^2 (\mathscr{D})\cap H$. The nodal observation operator satisfy the {\em approximation of identity property} analogous to \eqref{T1_PKp} due to the estimate \cite{AOT}
\be
	    \textrm{Type\,-2}:\  \Sob{\bu-\I_h\bu}{L^2}^2\leq c_3^2 h^2\Sob{ \bu}{\h^1}+c_4^2 h^4\Sob{\De \bu}{L^2}^2\quad  \forall\  \bu \in \h^2,\label{interpolant:est_2}
 \ee
  where $c_1,  c_2,c_3,c_4> 0$ are absolute constants. 
\end{itemize}
\end{itemize}
The nodal and volume observation operators represent  \textit{physical observational data} collected in many geophysical applications while the modal observations may also represent physical observables in certain cases \cite{kt1, kt2}.

\subsection{Filter Accuracy and Stability and Covariance Inflation and Localization.}\label{secn:cov_localisation_inflation}\hspace{\fill}\\

\underline{\bf Observation inequality and observation space:} The analysis of the performance  of  DA approaches has been a developing area over the last decade and the papers \cite{AOT, BLSZ2013, CGTU08, deVRS18,deVT20, HM08, TK97,PVL09,PVL10}, amongst many others,  contain finite- and infinite-dimensional theory and numerical experiments in a variety of finite and discretized infinite dimensional systems. 
Many of the techniques used in this analysis originate from the study of  accuracy and stability results for control systems  \cite{TR76}.  In particular,  it is known (e.g.,  \cite{CGTU08, TP11, frank18})  that, in lay terms, the state space `directions' where solutions of the filter estimates diverge must be observed and constrained by observations, and that the quality of filter estimates can be improved by avoiding assimilation of `stable directions', where the solutions converge. This requirement leads to the notion of  {\it assimilation in the unstable subspace/manifold} which must be finite-dimensional in order for it to be `captured' through finite-dimensional observations. Equivalently,  the number of nonnegative Lyapunov exponents of the underlying attractor must be at less than the rank of the observation operator. In our setting of dissipative PDEs, this is reflected in the richness of observations condition \eqref{T1_PKp} and,  in line with \cite{AOT, BLSZ2013}, achieving filter accuracy and stability for the modal and volume element observations requires observations of low modes or coarse-scales in the state space. In our unified setting for both EnKF/EnSRKF and 3DVar,  we  address the issue of filter divergence and its control via {\it covariance inflation} and {\it covariance localization}; our interpretation of the meaning of these often ambiguous notions is outlined below.  Unsurprisingly, in our rigorous analysis it is critical that the unstable directions are contained in the observation space $\K^\perp$.  Moreover, it turns out that while covariance localisation is not essential in  3DVar, localization onto $\K^\perp$ it is {\it critical for stability and accuracy} of EnKF/EnSRKF.  The rigorous recognition of this empirically observed effect is new to the best of our knowledge.  Whether or not  localization onto $\K^\perp$  (as opposed to some larger space containing  $\K^\perp$) is necessary for stability and accuracy of EnKF/EnSRKF remains an open question at this point.

\smallskip
\underline{\bf Covariance inflation}: The minimisation step (\ref{opt}) in the general Bayesian DA procedure computes an update which is a compromise between the model predictions and the data. This compromise is weighted by the covariance on the model and the fixed noise covariance on the data. The underlying dynamics and its model typically allow for unstable divergence of trajectories and the potential amplification of errors in the model prediction, whilst the empirical data tends to stabilize the outcome, albeit at the expense of the corrupting observation noise. Variance inflation is a technique of adding stability to the algorithm by increasing the size of the model covariance in order to weight the empirical data more heavily. The form of variance inflation that we will study is found by shifting the covariance $C$ by some positive definite linear operator $C^+$. That is, one sets $C\rightarrow C+C^+$; the simplest such a procedure which is typically used in applications is to set $C^+ = \mu I$, $\mu>0$ so that $C\rightarrow C+\mu I$ becomes increasingly diagonally dominant for increasing $\mu$. As we highlight later on, the so-called {multiplicative covariance inflation}, $C\rightarrow \mu C$, $\mu>1$, does not help achieve accuracy especially in the important case of the EnKF filter.

\smallskip
\underline{\bf Covariance localization} implies the reduction of the bandwidth and the rank of the covariance operator. In practice covariance localization  relies on removing correlations involving components of the state that are deemed to introduce spurious instabilities in the DA estimates. Coavariance localization depends on the type of the observation operator and, consequently, the choice of the orthonormal basis $\{\psi_n\}_{n\in \mathbb{Z}}$  in $H$.  In case of both the modal and volume element observations, localization entails removing correlations between coarse and fine scales in the covariance. This generally ad hoc procedure is intimately related and constrained by the need to retain correlations between the `unstable' degrees of freedom so that  {\it assimilation in the unstable subspace}  (e.g.,  \cite{CGTU08, TP11, frank18}) is retained. In other words, covariance localization removes spurious correlations introduced by the observational noise between observed and unobserved directions leading to destabilization of the filter equations.  We make this procedure precise and theoretically justified in the sequel.

\smallskip
As already mentioned in the Introduction, the use of covariance localization and inflation has been studied in applied literature (e.g., \cite{bishop07,anderson12,menetrier15,roh15,bolin16,smith18,yoshida18,gharamti18,lopez21}) but their  theoretical importance in the stability and accuracy of EnKF and EnSRKF remained unclear to this point.

\section{Dynamics}
In the subsequent sections  we will consider accuracy and stability of a class of data assimilation methods aimed at estimating the state of a nonlinear, dissipative dynamical system with a quadratic, energy-preserving nonlinearity. There is a large class of finite- and infinite-dimensional dynamical systems with such a structure that are relevant in geophysical applications. In \S\ref{sec:finds} we introduce several finite-dimensional systems of that type, while \S\ref{sec:nse} outlines the well-known and important benchmark dynamics of the Navier-Stokes equations (NSE) that we focus on in the subsequent sections. Other infinite-dimensional, dissipative systems with the energy-preserving nonlinearity include, e.g., the Barotropic or the Boussinesq equation (e.g., \cite{majdapdewaves}); we postpone the analysis of such dynamics to a separate publication.     
 
\subsection{Finite-dimensional forced, dissipative dynamical systems}\label{sec:finds}
A well known example of a three-dimensional, deterministic, forced dissipative dynamical system is given by the Lorenz 63 model (e.g.,~\cite{Keller}) in the form 
\begin{align}\label{Lorenz1}
\begin{cases}
\dot{x} = -\bar\alpha \hspace{.04cm}x+\bar\alpha \hspace{.04cm} y,\\
\dot{y} = -\bar\alpha \hspace{.04cm} x -\bar\beta \hspace{.04cm} y-xz,\\
\dot{z} = -\bar\gamma \hspace{.04cm}z+xy-\bar\gamma \bar\beta^{-2}\hspace{.04cm}(\bar\varrho+\bar\alpha),
\end{cases}
\end{align}
with parameters $ \bar\alpha, \bar\beta,\bar\gamma,  \bar\varrho>0$. We set  $\bv=(v_1,v_2,v_3) := (x,y,z)\in\R^3$  and consider the  stochastically  perturbed version of (\ref{Lorenz1})  for $t\in \R$ in the form
\begin{align}\label{Lorenz}
d\bv_t = \mathbf{b}(t,\bv_t)\dr t+\sigma(\bv_t)\dr \bW_t = \big[-A\bv_t-B(\bv_t, \bv_t)+\mathbf{F}(t)\big]\dr t+\sigma(\bv_t)\dr \bW_t, \qquad \bv_0\in \R^3,
\end{align}
where
\begin{align} \label{Lcoefs}
A&= \begin{bmatrix}
\bar\alpha& -\bar\alpha& 0\\
\bar\alpha & \bar\beta& 0\\
0 & 0 & \bar\gamma
\end{bmatrix}, \;\; B(\bv,\bv)= \begin{bmatrix}
\;\;\;0\\ \;\;\,v_1v_3\\ -v_1v_2
\end{bmatrix}, 
\quad  \;\sigma(v) = \bar\sigma_1+ \bar\sigma_2 \begin{bmatrix}
v_1 & 0& 0\\
0& v_2&0\\ 0& 0& v_3
\end{bmatrix},
\end{align}
 and $\bW_t =(W_t^1, W_t^2, W_t^3)$ is an independent Wiener process in $\R^3$. It is well-known that for $\bar \sigma_1, \bar\sigma_2 =0$ the system (\ref{Lorenz}) has an absorbing ball  (and an attractor) for all values of the parameters, since for $V(t,\bv) = \|\bv\|^2$ we have 
\begin{align*}
\frac{1}{2}\frac{d V}{dt}=\langle \mathbf{b}(\bv),\bv\rangle&= -\bar\alpha\left(v_1-\frac{F_1}{2\bar\alpha}\right)^2- \bar\beta v_2^2-\bar\gamma\left(v_3 +\frac{\bar\varrho+\bar\alpha}{2\bar\beta^2}\right)^2 +\frac{\bar\alpha \bar\gamma\bar\beta^{-2}(\bar\varrho+\bar\alpha)^2+F_1^2}{4\bar\alpha}.
\end{align*}
Next, note that the linear part in (\ref{Lorenz}) 
\begin{align*}
\langle A\bv, \bv\rangle\geqslant \mathfrak{C}_A \| \bv\|^2, \qquad \mathfrak{C}_A =\min\{\bar\alpha,\bar\beta, \bar\gamma\},
\end{align*}
and the quadratic term  $ B(\bv,\bw)$  satisfies  satisfies for any $\bv,\bw,\bu\in \R^3$
\begin{align}\label{DLor}
\langle B(\bv,\bw), \bw\rangle  = 0, \qquad \langle B(\bv,\bw), \bu\rangle =-\langle B(\bv,\bu), \bw\rangle, \qquad \| B(\bv,\bw)\| \leqslant \| \bv\|\| \bw\|.
\end{align}
Existence of global solutions of (\ref{Lorenz}) was shown in \cite{BU20}. 

\smallskip
Another well known forced dissipative system with the quadratic nonlinearity is given by the Lorenz~96 model which is frequently used as a toy test bed in atmosphere-ocean science; for the state $\bv\in \R^N$, $N\geqslant 4$ this dynamics is given component-wise by
\begin{align}\label{L96}
\dr v^{(i)} =\big( (v^{(i+1)}-v^{(i-2)})v^{(i-1)}-v^{(i)}+F^{(i)}\big)\dr t+\sigma_i(\bv)\dr W^{(i)}_t, 
\end{align} 
where we set $v^{(-1)} = v^{(N-1)}$, $v^{(0)}=v_{(N)}$, $v^{(N+1)} = v^{(1)}$, and the quadratic term in the drift satisfies~(\ref{DLor}).  The deterministic setup with $\sigma\equiv 0$ and $F =8$ is commonly used in numerical considerations for dynamics with a chaotic attractor. Existence of global solutions of (\ref{L96}) with the diffusion $\sigma$ as in $(\ref{Lcoefs})$ can be shown in a way analogous to that for (\ref{Lorenz}) (see \cite{BU20}). 

\subsection{Incompressible Navier-Stokes equations (NSE)} \label{sec:nse}
 The NSE is given by
	\begin{align}\label{nse}
		\bdy_t\bu-\nu\De \bu+\bu\cdotp\nabla \bu+\nab p=\mbf{f},\quad\nab \cdotp \bu=0,\quad \bu(x,0)=\bu_0(x), \;x\in\mathscr{D} \subset \R^d.
	\end{align}
Here $\bu$ represents the fluid velocity, $\nu>0$ is the kinematic viscosity, $p$ is the scalar pressure field, $\mbf f$ is a given body force, $\bu_0$ is the initial velocity, and incompressibility is expressed by $\nab \cdotp\bu=0$. For simplicity, we assume $\bu$ is space-periodic and employing Galilean invariance,
we assume $\int_{\mathscr{D}} \bu \dr x =0$.
 The state  space $H$ for the NSE is defined to be
 \[
 H=\left\{\bu \in L^2(\mathscr{D}): \;\nabla \cdot \bu=0, \;\;
 \int_{\mathscr{D}} \bu\, \dr x =0\ (\mbox{periodic})\;\; \mbox{or}\;\; u|_{\partial \mathscr{D}}=0\ (\mbox{Dirichlet})\right\}
 \]
 with the inner product $(\bu,\bv) = \int_\mathscr{D}  \bu\cdot\bv = \int_\mathscr{D} \sum  u_i v_i \,\dr x$, where $u_i, v_i$ are the components of $\bu, \bv$ respectively.
 The Hilbert space $V$ is defined by
 $
 V=\{\bu \in H: \|\bu\|_V:=\|\nabla \bu\|_{L^2} < \infty\},
 $
 with respect the inner-product 
 $$
 \bvl \bu,\bv \bvr= \int_\mathscr{D} \nabla \bu \!::\!\nabla \bv\,\dr x
 = \int_\mathscr{D} \sum \partial_{j} u_i \partial_jv_i \,\dr x.
 $$
Henceforth,  following \cite{cf, Temambook1995}, we will denote
 \be  \label{frenchschool}
 \|\bv\|_H = \|\bv\|_{L^2}\;\; (\bv \in H)\quad  \textrm{and}\ 
 \|\bv\|_V=\|\nabla \bv\|_{L^2}\ (\bv \in V).
 \ee
 The \emph{Leray-Hopf} (orthogonal) projection operator is $\Pi:L^2(\mathscr{D}) \lra H$. For periodic boundary conditions the Stokes operator $A$ is defined to be $A=- \Pi\De \,|_{V \cap H^2(\mathscr{D})}$ which  is positive and self-adjoint with a compact inverse.  Denoting by $A^{1/2}$ the square root of $A$,  we have $D(A^{1/2})=V$ \cite{cf}. Due to the Poincar\'{e} inequality, $\|\bu\|_{\h^1} \sim \|\bu\|_V$ for $\bu\in V$.  Finally, applying the Leray projection,  one may express the NSE in its functional form \cite{Temambook1995}
 \begin{align}\label{dissip:eqn}
        \frac{\dr \bu}{\dr t}=-\nu A\bu-B(\bu,\bu)+\mbf{f}, \qquad 
        B(\bu,\bv)=\Pi (\bu \cdot \nabla)\bv.
    \end{align}
We  denote the eigenvalues of the Stokes operator by
 $0 < \lambda_1 < \lambda_2 < \cdots $ with $\lambda_n \ra \infty$ as $n\rightarrow \infty$, and the associated orthonormal eigenbasis $\{e_n\}_{n=1}^\infty$. In the space $\mathscr{D}$\,--periodic case, the eigenvalues are given by $\{\frac{4\pi^2}{\mathscr{D}^2}|\mbf n|^2\}_{\mbf n \in \mathbb{Z}^d}$. The \emph{spectral subspace} $H_N (\subset H)$ is the finite-dimensional space spanned by eigenvectors corresponding to eigenvalues $\lambda$ such that $\lambda \le \lambda_N$;
 the corresponding orthogonal projection onto $H_N$ is denoted by $P_N$.
 
It is well known that the bilinear operator $B(\cdot,\cdot): V\times V \rightarrow V^*$ (with $V^*$ the dual of $V$) satisfies a number of useful identities; namely
\begin{align}
\langle B(\bu,\bv),\bz\rangle_{V^*,V}\simeq (B(\bu,\bv),\bz) = -(B(\bu,\bz),\bv), \qquad (B(\bu,\bv),\bv) = 0, \qquad \forall\; \bu,\bv,\bz \in V 
\end{align}
and 
\begin{align}
|( B(\bu,\bv),\bz)|\leqslant \|\bu\|_{L^4}\|\bv\|_V\|\bz\|_{L^4}.
\end{align}
Furthermore, the two-dimensional Ladyzhenskaya interpolation inequality 
\begin{equation}\label{ladineq}
\|\bu\|^2_{L^4}\leqslant c_L\|\bu\|_H\|\bu\|_V
\end{equation}
implies that 
\begin{equation}
|( B(\bu,\bv),\bz)|\leq c_L \|\bu\|_{H}^{1/2} \|\bu\|^{1/2}_V \|\bv\|_V \|\bz\|_{H}^{1/2}\|\bz\|_{V}^{1/2}.
\end{equation} 
 It is well-known that a \emph{Leray-Hopf weak solution}  $\bu(t)$ of \eqref{dissip:eqn} exists on $[0,\infty)$ (see \cite{cf, Temambook1995} for definition) and it satisfies the (energy) inequalities
 \be  \label{Lenergyineq}
 \begin{array}{l}
 \frac12 \|\bu(t)\|_H^2 + \nu \int_{t_0}^t \|\bu(s)\|_V^2\, ds \le 
 \frac12 \|\bu(t_0)\|_H^2 + \int_{t_0}^t (f,\bu(s))\,ds,\quad  t \ge t_0
 \  ({\rm a.a.}\, t_0),\\[.5cm]
  \|\bu(t)\|_H^2 \le e^{-\nu \lambda_1t}\|\bu_0\|_H^2 + \frac{1}{\nu^2 \lambda_1^2}\|{\bf f}\|_H^2\lf (1- e^{-\nu \lambda_1t}\rg ), \quad t \ge 0.
  \end{array}
 \ee
The Leray-Hopf weak solutions $\bu(t)$ belong to  $L^\infty([0,T]; H) \cap L^2([0,T]; V) \cap C([0,T]; V^*)$,  and $\frac{du}{dt} \in L^1((0,T);V^*)$ for all $T>0$, where $V^*$ is the dual of $V$ (see \cite{cf, Temambook1995}). 
 A weak solution is said to be a {\em strong solution} if it also belongs to $L^\infty((0,T); V) \cap L^2((0,T); D(A))$. In two-dimensions (\ref{dissip:eqn}) is well-posed and it has a global attractor.  In particular, we have the following well-known results (see, e.g., \cite{cf, Temambook1995}):
 \begin{theorem}\label{NSE_thm_1}
 Let $\bu_0\in V$ and $f\in H$. Then (\ref{dissip:eqn}) has a unique strong solution that satisfies 
 \begin{equation}
 \bu(t)\in C\big([0,\,T];V\big)\cap L^2\big([0,\,T];D(A)\big) \qquad \textrm{for any} \quad T>0.
 \end{equation}
 Moreover, the solution $\bu(t)$ depends continuously on $\bu_0$ in the $V$ norm. 
 \end{theorem}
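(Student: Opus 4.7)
\medskip
\noindent\textbf{Proof proposal.} The plan is to follow the classical Galerkin construction, combined with 2D-specific interpolation estimates that allow us to close an a~priori bound in $V$. Let $P_N$ denote the projection onto $H_N = \text{span}\{e_1,\dots,e_N\}$ and consider the finite-dimensional ODE
\begin{equation*}
\frac{\dr \bu_N}{\dr t} = -\nu A \bu_N - P_N B(\bu_N,\bu_N) + P_N \mbf{f}, \qquad \bu_N(0) = P_N \bu_0,
\end{equation*}
which has a unique local-in-time solution by standard ODE theory. Taking the inner product with $\bu_N$ yields the standard energy identity (using $(B(\bu_N,\bu_N),\bu_N)=0$) and hence the $H$-bound from \eqref{Lenergyineq}, which in particular rules out finite-time blow-up and extends $\bu_N$ globally.

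The key step is the $V$-bound. I would take the inner product of the Galerkin equation with $A \bu_N$, obtaining
\begin{equation*}
\tfrac{1}{2}\tfrac{\dr}{\dr t}\|\bu_N\|_V^2 + \nu \|A\bu_N\|_H^2 = -(B(\bu_N,\bu_N), A\bu_N) + (\mbf{f}, A\bu_N).
\end{equation*}
Here I would use the 2D Ladyzhenskaya inequality \eqref{ladineq} together with $|(B(\bu,\bu),A\bu)| \leqslant c\,\|\bu\|_{L^4}\|\nabla \bu\|_{L^4}\|A\bu\|_H$ and further interpolation $\|\nabla\bu\|_{L^4}^2 \lesssim \|\bu\|_V\|A\bu\|_H$, followed by Young's inequality to absorb $\|A\bu_N\|_H^2$ into the viscous term. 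What remains is a differential inequality of the form
\begin{equation*}
\tfrac{\dr}{\dr t}\|\bu_N\|_V^2 + \nu \|A\bu_N\|_H^2 \leqslant C_\nu\|\bu_N\|_H^2 \|\bu_N\|_V^4 + C_\nu \|\mbf{f}\|_H^2.
\end{equation*}
Since $\|\bu_N\|_H$ is uniformly bounded and $\int_0^T\|\bu_N\|_V^2\,\dr t$ is uniformly bounded from the energy inequality, Gronwall's lemma gives uniform bounds on $\bu_N$ in $L^\infty(0,T;V)\cap L^2(0,T;D(A))$. The equation then also bounds $\dr\bu_N/\dr t$ in $L^2(0,T;H)$.

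With these bounds I would extract a subsequence converging weak-$\ast$ in $L^\infty(0,T;V)$, weakly in $L^2(0,T;D(A))$, and strongly in $L^2(0,T;V)$ via Aubin--Lions (using $V\hookrightarrow\hookrightarrow H$ and the uniform bound on the time derivative). Strong convergence in $L^2(0,T;V)$ is exactly what is needed to pass to the limit in the nonlinear term $B(\bu_N,\bu_N)$, yielding a strong solution. Continuity in $V$ follows from $\bu\in L^2(0,T;D(A))$ and $\dr\bu/\dr t\in L^2(0,T;H)$. For uniqueness and continuous dependence, I would let $\bw = \bu^{(1)}-\bu^{(2)}$, which satisfies $\bdy_t \bw + \nu A \bw + B(\bw,\bu^{(1)}) + B(\bu^{(2)},\bw) = 0$; pairing with $\bw$ and invoking $(B(\bu^{(2)},\bw),\bw)=0$ together with
\begin{equation*}
|(B(\bw,\bu^{(1)}),\bw)| \leqslant c_L \|\bw\|_H \|\bw\|_V \|\bu^{(1)}\|_V,
\end{equation*}
one absorbs $\nu\|\bw\|_V^2$ and applies Gronwall using $\bu^{(1)}\in L^2(0,T;V)$. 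The analogous argument at the $V$ level (pairing with $A\bw$ and using the 2D interpolation bounds on the nonlinear term) gives continuous dependence of $\bu$ on $\bu_0$ in the $V$-norm.

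The main obstacle is the nonlinear-term estimate at the $V$ level: controlling $(B(\bu_N,\bu_N),A\bu_N)$ requires the specifically two-dimensional Ladyzhenskaya-type interpolations, and one must be careful to keep the resulting quartic growth in $\|\bu_N\|_V$ integrable against the already-controlled $L^2_tV$ bound. In three dimensions, this step breaks down at precisely this point, which is why the statement is restricted to the 2D setting.
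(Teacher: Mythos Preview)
Your argument is correct and is essentially the standard textbook proof; the paper does not give its own proof but simply quotes the result from \cite{cf, Temambook1995}. One minor simplification worth noting: since the paper works in the space-periodic setting, the orthogonality $(B(\bu,\bu),A\bu)=0$ holds identically, so the $V$-level a~priori estimate closes without any Ladyzhenskaya-type bound on the nonlinear term---your interpolation route is needed only in the Dirichlet case.
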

 \begin{theorem}\label{NSE_thm_2}
 Given the solution $\bu(t)$ of (\ref{dissip:eqn}) as in Theorem \ref{NSE_thm_1}, there exists a time $t_0$ which depends on $\bu_0$, such that for all $t\geqslant t_0$ we have 
 \begin{align}\label{uH_bnd}
 \|\bu(t)\|_H^2\leqslant 2\nu^2G^2 \qquad \textrm{and} \qquad \int_t^{t+T}\|\bu(s)\|_V^2\dr s\leqslant 2(1+T\nu\lambda_1)\nu G, 
 \end{align}
 where $G = \|{\bf f}\|_H/(\nu^2\lambda_1)$ is the {\it Grashof} number. Furthermore
 \begin{align}\label{uV_bnd}
 \|\bu(t)\|_V^2\leqslant 2\nu^2\lambda_1G^2 \qquad \textrm{and} \qquad \int_t^{t+T}\|A\bu(s)\|_H^2\dr s\leqslant 2(1+T\nu\lambda_1)\nu\lambda_1 G, 
 \end{align}
 and 
 \begin{align}\label{AuH_bnd}
 \|A\bu(t)\|_H^2\leqslant 2 c \hspace{.04cm}\nu^2\lambda_1^2(1+G)^4.
 \end{align}
 \end{theorem}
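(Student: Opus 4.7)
The plan is to proceed in three stages, bootstrapping from $H$ to $V$ to $D(A)$ by taking successively higher-order energy tests and invoking a uniform Gronwall argument at each step. Throughout, we use the orthogonality identity $(B(\bu,\bu),\bu)=0$ and its analogue $(B(\bu,\bu),A\bu)$, together with the two-dimensional Ladyzhenskaya inequality \eqref{ladineq}.

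First, testing \eqref{dissip:eqn} against $\bu$ and using $(B(\bu,\bu),\bu)=0$ gives
\begin{equation*}
\tfrac{1}{2}\tfrac{d}{dt}\|\bu\|_H^2+\nu\|\bu\|_V^2=({\bf f},\bu)\leqslant\tfrac{\nu}{2}\|\bu\|_V^2+\tfrac{1}{2\nu\lambda_1}\|{\bf f}\|_H^2,
\end{equation*}
after a Cauchy--Schwarz and Poincar\'e estimate. Combined with $\|\bu\|_V^2\geqslant\lambda_1\|\bu\|_H^2$ this gives $\tfrac{d}{dt}\|\bu\|_H^2+\nu\lambda_1\|\bu\|_H^2\leqslant\nu^3\lambda_1 G^2$, and Gronwall shows that $\|\bu(t)\|_H^2\leqslant 2\nu^2 G^2$ for all $t\geqslant t_0(\bu_0)$. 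Integrating the same differential inequality from $t$ to $t+T$ and discarding the nonnegative term on the left yields the time-averaged bound in \eqref{uH_bnd}. This is the easy part.

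Next, to obtain \eqref{uV_bnd}, test \eqref{dissip:eqn} against $A\bu$ to get
\begin{equation*}
\tfrac{1}{2}\tfrac{d}{dt}\|\bu\|_V^2+\nu\|A\bu\|_H^2=-(B(\bu,\bu),A\bu)+({\bf f},A\bu).
\end{equation*}
The nonlinear term is controlled in $2$D using Ladyzhenskaya: $|(B(\bu,\bu),A\bu)|\leqslant c_L\|\bu\|_H^{1/2}\|\bu\|_V\|A\bu\|_H^{3/2}$, which by Young's inequality is absorbed into $\tfrac{\nu}{4}\|A\bu\|_H^2+C\nu^{-3}\|\bu\|_H^2\|\bu\|_V^4$. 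The forcing term is handled similarly. One arrives at a differential inequality of the form
\begin{equation*}
\tfrac{d}{dt}\|\bu\|_V^2+\nu\|A\bu\|_H^2\leqslant g(t)\|\bu\|_V^2+h(t),
\end{equation*}
where, thanks to the $H$-bounds just obtained, $g$ and $h$ have explicit (time-uniform) integral bounds on any window $[t,t+T]$. The uniform Gronwall lemma (see, e.g., \cite{Temambook1995}) then yields the pointwise bound on $\|\bu(t)\|_V^2$; tracking constants carefully gives the stated $2\nu^2\lambda_1 G^2$. Integrating the differential inequality over $[t,t+T]$ then gives the time-averaged $\|A\bu\|_H^2$ bound. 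The main obstacle in this step is keeping the constants explicit and dimensionally consistent, but no new ideas are needed beyond the classical uniform Gronwall scheme.

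Finally, for \eqref{AuH_bnd}, test \eqref{dissip:eqn} against $A^2\bu$ to obtain
\begin{equation*}
\tfrac{1}{2}\tfrac{d}{dt}\|A\bu\|_H^2+\nu\|A^{3/2}\bu\|_H^2=-(B(\bu,\bu),A^2\bu)+({\bf f},A^2\bu),
\end{equation*}
and control the nonlinear term via $|(B(\bu,\bu),A^2\bu)|=|(A B(\bu,\bu),A\bu)|$ and appropriate Sobolev/Agmon estimates in two dimensions, absorbing the top derivative into $\tfrac{\nu}{2}\|A^{3/2}\bu\|_H^2$. The remaining lower-order terms are bounded polynomially in $\|\bu\|_V$, which is already under control by the previous step. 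One more application of the uniform Gronwall lemma, using the already-established integral bound on $\|A\bu\|_H^2$ as the ``integrable'' coefficient, produces the claimed $O(\nu^2\lambda_1^2(1+G)^4)$ estimate. This last step is the hardest because the nonlinear estimate at the $D(A)$ level is less forgiving, but all the required ingredients (Ladyzhenskaya, Agmon, uniform Gronwall) are standard in the 2D NSE toolkit.
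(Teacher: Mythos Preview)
The paper does not prove this theorem; it is quoted as a classical result with references to \cite{cf, Temambook1995}, so there is no ``paper's own proof'' to compare against. Your overall three-stage bootstrap strategy is the standard one from those references and is correct in outline.

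That said, there is a genuine issue in your second step. The paper works in the $2$D \emph{periodic} setting (see the sentence preceding the definition of $H$), and in that setting one has the exact orthogonality identity $(B(\bu,\bu),A\bu)=0$, not merely $(B(\bu,\bu),\bu)=0$. Using it, the enstrophy balance reads simply
\[
\tfrac{1}{2}\tfrac{d}{dt}\|\bu\|_V^2+\nu\|A\bu\|_H^2=({\bf f},A\bu)\leqslant \tfrac{\nu}{2}\|A\bu\|_H^2+\tfrac{1}{2\nu}\|{\bf f}\|_H^2,
\]
and after Poincar\'e and Gronwall this yields $\|\bu(t)\|_V^2\leqslant 2\nu^2\lambda_1 G^2$ with exactly the stated constant. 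Your route via $|(B(\bu,\bu),A\bu)|\leqslant c_L\|\bu\|_H^{1/2}\|\bu\|_V\|A\bu\|_H^{3/2}$ and uniform Gronwall is not wrong per se, but it cannot recover the clean constant $2\nu^2\lambda_1 G^2$: the bound you obtain will carry extra factors of $c_L$ and of the $H$-norm estimate, so the sentence ``tracking constants carefully gives the stated $2\nu^2\lambda_1 G^2$'' is not quite right. The same simplification propagates to the time-integrated $\|A\bu\|_H^2$ bound in \eqref{uV_bnd}. For \eqref{AuH_bnd} the nonlinear term no longer vanishes and your approach via Agmon-type estimates and uniform Gronwall is indeed what is done in \cite{Temambook1995}.
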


\section{Accuracy and Stability of Approximate Gaussian Filters}  \label{sec:unified}
As seen in Section \ref{cont_approx_G_list}, a variety of commonly used DA algorithms in continuous time outlined in Section \ref{cont_approx_G_list}, including 3DVar, Nudging, EnKF, and EnSRKF, lead to equation of the type as in \eqref{3dvareq}   with different choices of the covariance operator $C$. Thus, the analysis below  provides a unified framework for the study of  the continuous time limit  of these commonly used DA algorithms.  The {main objective}  is  to illuminate the interplay between the observation operator $\co$, the covariance operator ($C$ in \eqref{3dvareq}) and to clarify the role of the deterministic equation corresponding to \eqref{3dvareq}. Accordingly, we aim to provide appropriate conditions for stability and accuracy of the class of approximate Gaussian filters listed in \S\ref{cont_approx_G_list};  that is for a solution $\bm$ of \eqref{3dvareq} (or $\bm^{(k)}$ in (\ref{enkf_t}) or in (\ref{ensrkf_t})) with initial data $\bm_0$ and the solution $\bu(t)$ of (\ref{dissip:eqn}) we aim to establish estimates of the type

\be  \label{goodfilter}
\left.
 \begin{array}{l}
\text{(i) Filter Accuracy:}\ \limsup_{t \ra \infty} \E \|\bm (t) - \bu(t)\|^2  \lesssim\mathfrak{C}\hspace{.03cm}(\sigma^2),\\[.5cm]
\text{(ii) Filter Stability:}\ 
\E \|\bm_1(t) - \bm_2(t)\|^2 \lesssim 
e^{-kt}\E\|\bm_1(0)-\bm_2(0)\|^2\\[.3cm]
\qquad \qquad \qquad \qquad \qquad \qquad \qquad \qquad \qquad + \sup_t \E \|\co(\bu_1(t)-\bu_2(t))\|^2+\mathfrak{C}\hspace{.03cm}(\sigma^2)
\end{array}
\right\},
\ee
where $\|\cdot\|$ is a norm on an appropriate Hilbert space,  $\bm_i(\cdot)$, $i=1,2$ are two solutions of \eqref{3dvareq} with initial (distribution) data $\bm_i(0)$ and trajectories $\{\bu_i(t)\}$, $t\in\bbr$, and  $\mathfrak{C}\hspace{.03cm}(\sigma^2)\rightarrow 0$ as $\sigma\rightarrow 0$, where $\sigma>0$ denotes the amplitude of the observational noise.

\medskip
Our analysis for the 3DVar filter is carried out by first asserting the accuracy and stability of the 3DVar algorithm (\ref{3dvareq}) for a general covariance $C$ for Type\,-1  observation operators (see \S\ref{acc_3dvar}). The relationship to the nudging algorithm is outlined in the process of the analysis.  We then focus on the ensemble based algorithms (EnKF and EnSRKF in (\ref{enkf_t}) and (\ref{ensrkf_t})) whose accuracy and stability can be asserted by an appropriate manipulation of the state-dependent covariance (via the so-called `inflation' and `localization'; see \S\ref{intro} and/or \S\ref{acc_3dvar}) and reducing the problem to (technically) resemble the 3DVar configuration (see \S\ref{enkf_sec}).

The derived bounds on the accuracy of these DA algorithms are not tight and it is likely that there exist choices of the covariance $C$ that lead to more accurate filters.  The issues concerned with the use of machine learning aimed at  fine-tuning these algorithms and empirically improving  the accuracy by optimizing the choice of the covariance operator $C$ (state dependent in case of EnKF and EnSRKF) will be discussed in a separate publication. We note that it  has been shown recently such an approach can  perform remarkably well in computations \cite{PAS}.

\subsection{Accuracy and Stability of 3DVar filter and Covariance Inflation and Localization.}\label{acc_3dvar}
In this section we  consider the problem of stability and accuracy (\ref{goodfilter}) of the filter expressed via (\ref{3dvareq}) with a state-independent covariance $C$; i.e., the 3DVar filter outlined in \S\ref{cont_approx_G_list}. The accuracy and stability of the continuous-time 3DVar filter has been studied before with some added constraints. In particular, the rigorous analysis of 3DVar for the 2D NSE in \cite{BLSZ2013} corresponds to a choice  $C$ as   a diagonal state-independent operator (a fractional power of the Stokes operator), and the  observation operator  $\co$ is restricted to be a modal projection (see (ii) in \S\ref{sect:interpolantops}). In our general setting we will also address the issue of filter divergence and its control via {\it covariance inflation} and {\it covariance localization} (see \S\ref{secn:cov_localisation_inflation} for the definitions).

\medskip
Our goal is to prove that the solution $\bm(t)$  of \eqref{3dvareq} approximates the true solution $\bu(t)$ of~NSE~(\ref{dissip:eqn}) when $t\rightarrow \infty$ to within a tolerance determined by the observation noise/error for  covariance operators other than the diagonal ones. We show this by considering the dynamics of the error $\bee(t) = \bm(t)-\bu(t)$ which, based on  \eqref{dissip:eqn} and \eqref{3dvareq}, satisfies 
\be  \label{3dverr}
\dr \bee=\scaleobj{1.4}{[}-A\bee -(B(\bu, \bee)+B(\bee,\bu)-B(\bee,\bee))-\sigma^{-2}C\co^*\co \bee\scaleobj{1.4}{]} \dr t
+\sigma^{-1}C\co^*\dr \bW_t,
\ee
where  $ \bee(t)$  satisfies a bound of the form 
\be  \label{ee_bnds}
 \limsup_{t \ra \infty} \E \|\bee(t)\|^2  \lesssim \mathfrak{C}\hspace{.03cm}(\sigma^2).
\ee
Here $\|\cdot\|$ is a norm on an appropriate Hilbert space ($H$ or $V$ depending on the observation operator) and  $\mathfrak{C}\hspace{.03cm}(\sigma^2)\rightarrow 0$ as $\sigma\rightarrow 0^+$.

\begin{theorem}[Existence of 3DVar filter solutions]\label{3dvar_exist}
Let $\bu$ be the strong solution of (\ref{dissip:eqn}) in two-dimensions,  where $\bu_0\in V$ and ${\bf f}\in H$.  Assume that $\co : \;H\rightarrow L^2$ satisfies (\eqref{T1_PKp}), and 
\begin{align}  
&(C\I_h \mathcal{P}_{\K^\perp}\bm, \mathcal{P}_{\K^\perp}\bm) \ge \beta \|\mathcal{P}_{\K^\perp}\bm\|_H^2\ \qquad \;\forall\ \bm \in H, \qquad \mbox{where}\ \I_h = \co^*\co, \; \beta>0.\label{cov_cnd_exist}
\end{align}
Then, for any $\bm_0\in V$ and $T>0$, and $2{\hat c_2}^2\beta h^2/\nu\sigma^2$ sufficiently small  there exists a unique solution of (\ref{3dvareq}) in the sense that $\p${\rm -a.s.} ${\bm\in C\big([0,\,T];H\big)\cap L^2\big([0,\,T];V\big)}$ and 
\begin{align}
&(\bm(t),\varphi)+\int_0^t(\bm(s),A\varphi)\hspace{.03cm}{\rm d} s- \int_0^t (B(\bm(s),\varphi),\bm(s))\hspace{.03cm}{\rm d} s  = (\bu_0,\varphi)\notag\\
&\hspace{2.5cm} +\int_0^t({\bf f}(s),\varphi)\hspace{.03cm}{\rm d} s -\sigma^{-2}\int_0^t\big(C\co^*\co(\bm(s)-\bu(s)),\varphi\big)\hspace{.03cm}{\rm d} s+\sigma^{-1}\int_0^t(C\co^*{\rm d} \bW_s,\varphi)
\end{align}
for all $t\in [0,\,T]$ and for all $\varphi\in D(A)$. Moreover, 
\begin{align}\label{m_int}
\E\left(\sup_{0\leqslant t \leqslant T} \|\bm(t)\|_H^2+\nu\int_0^T\|\bm(t)\|_V^2\hspace{.03cm}{\rm d} t \right)<\infty.
\end{align}
\end{theorem}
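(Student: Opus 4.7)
The plan is to establish existence via a Galerkin approximation in the eigenbasis $\{e_n\}$ of the Stokes operator $A$, derive a uniform a priori energy estimate, pass to the limit, and then upgrade to strong existence via pathwise uniqueness and Yamada--Watanabe. Let $P_n$ denote the orthogonal projection onto $H_n = \operatorname{span}\{e_1,\ldots,e_n\}$, and project (\ref{3dvareq}) onto $H_n$ to obtain a finite-dimensional It\^o SDE for $\bm^{(n)} = P_n\bm$. Its drift is locally Lipschitz and polynomially bounded in $\bm^{(n)}$, with the reference trajectory $\bu$ entering as a deterministic forcing which is bounded in $V$ by Theorem \ref{NSE_thm_2}, and its diffusion $\sigma^{-1}P_n C\co^*$ is Hilbert--Schmidt because $\co$ has finite rank by (\ref{T1_PKp}). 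Standard SDE theory therefore yields a local pathwise unique solution $\bm^{(n)}$, extended to $[0,T]$ once the energy bound below is in place.

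The key step is an It\^o computation for $\|\bm^{(n)}\|_H^2$ giving
\begin{align*}
\dr \|\bm^{(n)}\|_H^2 + 2\nu\|\bm^{(n)}\|_V^2\,\dr t
&= 2({\bf f},\bm^{(n)})\,\dr t - 2\sigma^{-2}\bigl(C\I_h(\bm^{(n)}-\bu), \bm^{(n)}\bigr)\,\dr t \\
&\quad + \sigma^{-2}\,\mathrm{Tr}\bigl(P_n C\co^*\co C^* P_n\bigr)\,\dr t + 2\sigma^{-1}\bigl(\bm^{(n)}, P_n C\co^*\,\dr \bW_t\bigr),
\end{align*}
the nonlinear term vanishing by the energy cancellation $(B(\bm,\bm),\bm)=0$, which survives Galerkin truncation. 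The trace is uniformly bounded in $n$ since $\co$ has finite rank, the martingale is controlled by Burkholder--Davis--Gundy, and the source $2\sigma^{-2}(C\I_h\bu,\bm^{(n)})$ is handled by Young's inequality together with the NSE bounds of Theorem \ref{NSE_thm_2}. The delicate term is the feedback $-2\sigma^{-2}(C\I_h\bm^{(n)}, \bm^{(n)})$; writing $\bm^{(n)} = \PKp\bm^{(n)} + (I-\PKp)\bm^{(n)}$ and using $\I_h = \I_h\PKp$ together with hypothesis (\ref{cov_cnd_exist}) gives
\begin{align*}
-2\sigma^{-2}(C\I_h\bm^{(n)},\bm^{(n)})\le -2\beta\sigma^{-2}\|\PKp\bm^{(n)}\|_H^2 + 2\sigma^{-2}\bigl|(C\I_h\PKp\bm^{(n)}, (I-\PKp)\bm^{(n)})\bigr|,
\end{align*}
and Cauchy--Schwarz combined with the interpolant bound $\|(I-\PKp)\bm^{(n)}\|_H \le c_2 h\|\bm^{(n)}\|_V$ from (\ref{T1_PKp}) plus Young converts the cross term into a fraction of $\beta\sigma^{-2}\|\PKp\bm^{(n)}\|_H^2$ plus a multiple of $(c_2^2 h^2 \beta/\sigma^2)\|\bm^{(n)}\|_V^2$. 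The smallness of $2\hat c_2^2\beta h^2/(\nu\sigma^2)$ assumed in the theorem is precisely what lets this last contribution be absorbed into the Stokes dissipation $\nu\|\bm^{(n)}\|_V^2$, after which Gronwall delivers a uniform-in-$n$ bound on $\E\bigl(\sup_{t\le T}\|\bm^{(n)}(t)\|_H^2 + \nu\int_0^T\|\bm^{(n)}(t)\|_V^2\,\dr t\bigr)$.

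The uniform bound yields weak-$*$ convergence of a subsequence in $L^2(\Omega;L^\infty(0,T;H))$ and weak convergence in $L^2(\Omega;L^2(0,T;V))$; fractional time regularity extracted from the equation together with Aubin--Lions compactness (or a Skorokhod representation) provides strong convergence in $L^2(0,T;H)$, sufficient to identify the limit of the quadratic term $B(\bm^{(n)},\bm^{(n)})$ in the classical 2D Navier--Stokes fashion. The limit $\bm$ satisfies the variational formulation in the statement and inherits (\ref{m_int}). Pathwise uniqueness is obtained by the analogous $H$-identity for the difference $\delta = \bm_1-\bm_2$ of two solutions driven by the same Wiener process: the stochastic integrals cancel, the feedback contributes $-2\sigma^{-2}(C\I_h\delta,\delta)$ which is treated by the same decomposition and smallness argument, and the nonlinear difference reduces (via $(B(\bm_i,\delta),\delta)=0$) to $-2(B(\delta,\bm_1),\delta)$ bounded by $c_L\|\bm_1\|_V\|\delta\|_H\|\delta\|_V$ through the Ladyzhenskaya inequality (\ref{ladineq}); this term is absorbed into the dissipation and Gronwall closes with the time-integrable coefficient $\|\bm_1\|_V^2$. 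The main obstacle throughout is the lack of localization of $C$: a generic covariance couples the observed subspace $\K^\perp$, where (\ref{cov_cnd_exist}) supplies feedback coercivity, to the unobserved complement $\K$, and this coupling is not by itself dissipative. The smallness of $2\hat c_2^2\beta h^2/(\nu\sigma^2)$ is the structural condition that transfers the coupling, via the interpolant estimate of (\ref{T1_PKp}), to the Stokes dissipation, so that the energy balance closes.
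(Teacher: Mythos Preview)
Your proposal is correct but follows a genuinely different strategy from the paper. The paper exploits the additive structure of the noise via the classical change of variables $\bm = \tilde{\bu} + \bz$, where $\bz$ is the stochastic convolution solving $\dr\bz + \nu A\bz\,\dr t = \sigma^{-1}C\co^*\dr\bW_t$, $\bz(0)=0$. This converts the SPDE into a random PDE for $\tilde{\bu}$ with no stochastic integral, so existence and uniqueness are obtained \emph{pathwise} for each fixed $\omega$ by the same Galerkin/compactness argument used for the deterministic 2D NSE (as in \cite{BOT,flandoli94}); strong adaptedness is automatic and no Skorokhod or Yamada--Watanabe machinery is needed. The bound (\ref{m_int}) is then derived separately by applying It\^o's formula to $\|\bm\|_H^2$ and the BDG inequality, exactly as you do.

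Your direct Galerkin approach on the full SPDE is equally valid and arrives at the same core energy estimate --- the treatment of the feedback term via (\ref{cov_cnd_exist}), the interpolant bound $\|(I-\PKp)\bm\|_H \le c_2 h\|\bm\|_V$, and absorption into the Stokes dissipation under the smallness of $2\hat c_2^2\beta h^2/(\nu\sigma^2)$ is essentially identical in both proofs. What you gain is a more robust template that would transfer to multiplicative or state-dependent noise (relevant later for EnKF), where the change-of-variables trick is unavailable; what you pay is the need to pass through martingale solutions via Skorokhod and then invoke Yamada--Watanabe, whereas the paper's pathwise reduction sidesteps all probabilistic compactness and delivers strong solutions directly.
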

\noindent {\it Proof}. The proof of this theorem is  similar to \cite{BOT} and \cite{flandoli94} and is based on a path-wise argument. The proof specific to our setup is presented in Appendix \ref{3dvar_exist_proof} in order to make the presentation relatively self-contained. 

\begin{remark}
Note that if $\I_h = \co^*\co$ is an orthogonal projection, then $P_{\K^\perp}=\I_h$ and the condition simplifies to 
 \begin{equation}
 (C\I_h \bm, \I_h\bm)\ge \beta \|\I_h\bm\|_H^2. 
 \end{equation}
\end{remark}

\medskip
Now we state the following two main results for the general 3DVar algorithm in continuous time with Type\,-1 observation operator. For simplicity of presentation and in order to illustrate the main ideas, we   start from a simpler setup in Theorem \ref{3dv_acc_1}), namely when  $\I=\co^*\co = P_{\K^\perp}$ is an orthogonal projection on $\K^\perp$ where $\K = \text{Kernel}\, \co$. This result is subsequently generalized in Corollaries \ref{3dv_acc_2} and \ref{3dv_acc_3}.

\begin{theorem}[Accuracy of filter solutions for  $\co^*\co = \I_h$ an orthogonal projection]\label{3dv_acc_1}
Let $\bu$ be the strong solution of (\ref{dissip:eqn})  where $\bu_0\in V$, ${\bf f}\in H$, and $\bm(t)$ satisfies (\ref{3dvar_exist}). Denote the error  in the 3DVar estimates  as $\bee(t) = \bm(t)-\bu(t)$.  Finally, assume that $\co : \;H\rightarrow L^2$ satisfies (\ref{interpolant:est_1}), 
$\I_h = \co^*\co=P_{\K^\perp}$ is an orthogonal projection on $\K^\perp$, where $ \K = {\rm Kernel}\, \co$ and $C$ is such that  
\begin{align}  
&(C\I_h\bee, \I_h\bee) \ge \beta \|\I_h\bee\|_H^2\qquad  \forall\;\bee \in H,
\label{cov_cnd_1}\\[.2cm]
& \frac{\nu}{4c_2^2h^2}\geqslant \frac{\beta}{\sigma^2} \ge \frac{c_1^2c_2^2h^2}{\sigma^4\nu}
\|P_\K CP_{\K^\perp}\|^2+c_L^2\nu\lambda_1G^2.  \label{betalocal}
\end{align}
Then, for any $\bu_0,\bm_0\in V$ and $T>0$ there exists a unique solution of (\ref{3dverr}) such that 
\begin{align} \label{avg_stochineq_thm_3dvar}
 \limsup_{t \ra \infty} \E \|\bee\|^2_H\leqslant \mathfrak{E}(\sigma^2):=\frac{1}{\gamma\sigma^{2}}\hspace{.04cm}{\rm Tr}(C\I_h C),
\end{align}
where
\begin{align}
\gamma &=  \frac{2\beta}{\sigma^2}- \frac{2c_1^2c_2^2h^2}{\sigma^4\nu}\|P_\K CP_{\K^\perp}\|^2
-2c_L^2\nu\lambda_1G^2. \label{gam_1}
\end{align}
Moreover, 
\begin{align}\label{enst_bnd_3dvar}
 \limsup_{t \ra \infty} \,\frac{\nu}{T}\int_t^{t+T}\|\bee(s)\|_V\dr s\leqslant \frac{1}{\kappa}\left(\frac{1}{\gamma \,T}+1\right)\frac{1}{\sigma^2}{\rm Tr}(C \I_h C),
\end{align}
where 
\begin{align}
\kappa = \frac{\nu}{2}-\frac{2\beta c_2^2h^2}{\sigma^2} .
\end{align}
\end{theorem}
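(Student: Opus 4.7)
The natural approach is to apply Itô's formula to $\|\bee(t)\|_H^2$ via the error equation \eqref{3dverr} and take expectations so the stochastic integral drops, producing a linear differential inequality for $\E\|\bee(t)\|_H^2$ in which the sole forcing is the Itô correction $\sigma^{-2}{\rm Tr}(C\I_h C)$. The bilinear antisymmetries $(B(\bu,\bee),\bee)=0$ and $(B(\bee,\bee),\bee)=0$ leave only $-\E(B(\bee,\bu),\bee)$ from the nonlinearity, which I would control by the two-dimensional Ladyzhenskaya inequality \eqref{ladineq}, namely $|(B(\bee,\bu),\bee)| \leq c_L\|\bee\|_H\|\bee\|_V\|\bu\|_V$. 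Combined with the attractor bound $\|\bu\|_V^2 \leq 2\nu^2\lambda_1 G^2$ from Theorem \ref{NSE_thm_2} (valid for $t \geq t_0$) and Young's inequality with weight $\nu/4$, this term splits into a piece absorbed by the viscous dissipation $\nu\|\bee\|_V^2$ and a growth $c_L^2\nu\lambda_1 G^2\|\bee\|_H^2$ that eventually enters $\gamma$.

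The crux lies in the covariance term $-\sigma^{-2}(C\I_h\bee,\bee)$. Decomposing $\bee = \bee_o + \bee_u$ with $\bee_o := \PKp\bee$ and $\bee_u := P_\K\bee$, so that $\I_h\bee = \bee_o$, hypothesis \eqref{cov_cnd_1} delivers the main dissipative contribution $(C\I_h\bee,\I_h\bee) \geq \beta\|\bee_o\|_H^2$, while the off-diagonal cross term is handled by $|(C\bee_o,\bee_u)| = |(P_\K C\PKp\bee,\bee_u)| \leq \|P_\K C\PKp\|\,\|\bee_o\|_H\|\bee_u\|_H$. Using the approximation-of-identity estimates \eqref{interpolant:est_1}, i.e., $\|\bee_u\|_H^2 \leq c_2^2 h^2\|\bee\|_V^2$ and $\|\bee_o\|_H \leq c_1\|\bee\|_H$, a Young inequality balanced against a further quarter of $\nu\|\bee\|_V^2$ converts the cross term into $(\nu/4)\|\bee\|_V^2 + \frac{c_1^2 c_2^2 h^2}{\nu\sigma^4}\|P_\K C\PKp\|^2\|\bee\|_H^2$. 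Simultaneously, the identity $\|\bee_o\|_H^2 = \|\bee\|_H^2 - \|\bee_u\|_H^2 \geq \|\bee\|_H^2 - c_2^2 h^2\|\bee\|_V^2$ turns $-\sigma^{-2}\beta\|\bee_o\|_H^2$ into $-\sigma^{-2}\beta\|\bee\|_H^2 + (\sigma^{-2}\beta c_2^2 h^2)\|\bee\|_V^2$, with the latter coefficient bounded by $\nu/4$ thanks to the upper part of \eqref{betalocal}. All newly generated $\|\bee\|_V^2$ contributions thereby fit inside the original $-\nu\|\bee\|_V^2$, leaving the coefficient of $\E\|\bee\|_H^2$ exactly equal to $-\gamma$ from \eqref{gam_1}. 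A standard Grönwall step on $\tfrac{d}{dt}\E\|\bee\|_H^2 \leq -\gamma\E\|\bee\|_H^2 + \sigma^{-2}{\rm Tr}(C\I_h C)$ then produces \eqref{avg_stochineq_thm_3dvar} upon taking $\limsup_{t\to\infty}$.

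For the enstrophy estimate \eqref{enst_bnd_3dvar} one must not consume the full viscous dissipation. Instead I would re-balance Young's inequality on the cross-term with weight proportional to $\beta$ rather than $\nu$, retaining a coefficient $\kappa = \nu/2 - 2\beta c_2^2 h^2/\sigma^2 > 0$ in front of $\|\bee\|_V^2$; integrating the resulting differential inequality on $[t,t+T]$, discarding the non-negative boundary term at $t+T$, and invoking the accuracy bound just established to control $\E\|\bee(t)\|_H^2$ on the right-hand side, yields \eqref{enst_bnd_3dvar} after dividing by $T$ and passing to $\limsup_{t\to\infty}$.

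The main obstacle is precisely the cross-term $(C\bee_o,\bee_u)$: it couples the observed and unobserved subspaces through the off-diagonal block $P_\K C\PKp$ of the covariance, and without the smallness built into the lower half of \eqref{betalocal} the decay rate $\gamma$ need not be positive. This is exactly where the abstract notion of \emph{covariance localization} acquires its quantitative justification, since enforcing $P_\K C\PKp = 0$ eliminates the obstruction entirely and sharpens $\gamma$ to the clean value $2\beta/\sigma^2 - 2c_L^2\nu\lambda_1 G^2$.
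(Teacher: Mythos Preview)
Your approach is essentially identical to the paper's: It\^o's formula on $\|\bee\|_H^2$, the orthogonal decomposition $\bee=\PKp\bee+P_\K\bee$, hypothesis \eqref{cov_cnd_1} for the diagonal block, the approximation-of-identity bound \eqref{interpolant:est_1} on $P_\K\bee$, Young on the $P_\K C\PKp$ cross term, Ladyzhenskaya plus the attractor bound \eqref{uV_bnd} on the nonlinearity, then Gr\"onwall and time-integration. The only discrepancy is bookkeeping in the Young weights: to recover the stated $\gamma$ and $\kappa$ exactly, the paper allocates weight $\nu$ to the Ladyzhenskaya split and $\nu/2$ to the cross-term split (not $\nu/4$ each as you write), and with those choices the single differential inequality \eqref{avg_stochineq_3dv} already carries $\kappa\|\bee\|_V^2$ on the left, so no separate ``re-balancing'' is needed for \eqref{enst_bnd_3dvar}---one simply integrates the same inequality on $[t,t+T]$.
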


\begin{remark}\rm The following observations are in order: 

\begin{itemize} [leftmargin = .7cm]
\item[(i)] Note that choosing $C=\beta \I_h=\beta P_{\K^{\perp}}$ satisfies \eqref{cov_cnd_1} and it also ensures that ${\rm Tr}(C\I_hC)=\beta^2 \sim  (\nu \lambda_1 G^2)^2 \sigma^4$ which in turn, due to $\gamma \gtrsim \sigma^2$ (see \eqref{gam_1}), implies that  $\mathfrak{E}(\sigma^2) \sim \sigma^4 \ra 0$ as $\sigma \ra 0$ in (\ref{avg_stochineq_thm_3dvar}) which is in line with (\ref{ee_bnds}). Other bespoke choices of $C$ might provide better accuracy. 

\item[(ii)] The conditions in Theorem \ref{3dv_acc_1} enforce a relationship between the \emph{richness of the observation space} which increases with decreasing $h$ and the  {\it large covariance property} in (\ref{cov_cnd_1}) controlled by $\beta$ which can be ensured by multiplicative covariance inflation if $C$ is strictly positive definite on $\text{Ran}\, \I_h= \K^\perp$, or else, by additive covariance inflation.  For 3DVar, under the setting of Theorem \ref{3dv_acc_1} where $\I_h$ is an orthogonal projection on $\K^\perp$, one can ensure \eqref{cov_cnd_1} by simply choosing $C=\mathfrak{c}I$ or more parsimoniously, $C=\mathfrak{c}P_{\K^\perp}$ for $\mathfrak{c}$ sufficiently large. The second choice leads to a localised covariance operator as discussed in (iii) below. Other less trivial choices of $C$ satisfying (\ref{cov_cnd_1}) are likely to lead to improved accuracy; machine learning procedures for learning the background covariance $C$ will be discussed in a separate publication.   
	
\item[(iii)] Formally, for $C=\mathfrak{c} I$ the bound (\ref{avg_stochineq_thm_3dvar}) is analogous to that derived in~\cite{BOT} in the setting of  `nudging'  in DA (\ref{nudg_W}). However, setting $C=\mathfrak{c} I$ is not always desirable (e.g., \cite{BM13}). 

\item[(iv)] When $\I_h$ is an orthogonal projection, i.e. $\I_h = \PKp$, employing {\it covariance localization}  allows one to obtain a tighter bound in (\ref{avg_stochineq_thm_3dvar}). This can be seen from the bound  (\ref{avg_stochineq_thm_3dvar}) which for $C\rightarrow \widetilde{C}=\PKp CP_{\K^\perp}$  holds with $\gamma =  \frac{2\beta}{\sigma^2}-2c_L^2\nu\lambda_1G^2$. The covariance localization, where the correlations between the observed and the unobserved components removed,  can be introduced a priori in the case of 3DVar and we refine and generalize this observation in Corollary \ref{3dv_acc_3}.   In the case of EnKF/EnSRKF, the localization procedure is more intricate but it proves crucial and provides ways to stabilise the algorithm.	

\end{itemize}
\end{remark}

The above setting leads to the following important observation for a much more general observation operator which will also be exploited later in the context of EnKF (see \S\ref{enkf_IhPK}):
\begin{theorem}[Accuracy of filter solutions for  $\co^*\co=\I_h$ of finite rank]\label{3dv_acc_2}
Let $\bu$ be the strong solution of (\ref{dissip:eqn})  where $\bu_0\in V$ and ${\bf f}\in H$ and assume that $\co : \;H\rightarrow L^2$ satisfies (\ref{interpolant:est_1}), $\I_h = \co^*\co$ has a finite rank, and  $C$ is such that 
\begin{align}  
&(C\I_h \mathcal{P}_{\K^\perp}\bee, \mathcal{P}_{\K^\perp}\bee) \ge \beta \|\mathcal{P}_{\K^\perp}\bee\|_H^2 \qquad \forall \;\bee \in H,\label{cov_cnd_2}\\[.2cm]
& \frac{\nu}{4c_2^2h^2}\geqslant \frac{\beta}{\sigma^2} \ge \frac{c_1^2c_2^2h^2}{\sigma^4\nu}
\|P_\K CP_{\K^\perp}\|^2+c_L^2\nu\lambda_1G^2.  \label{betacond_gen_loc}
\end{align}
Then, the assertions of Theorem \ref{3dv_acc_1} hold. 
\end{theorem}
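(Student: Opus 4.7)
The strategy is to mimic the proof of Theorem \ref{3dv_acc_1} with one crucial modification that accommodates a general finite-rank $\I_h$. Since $\K = {\rm Kernel}\,\co = {\rm Kernel}\,\I_h$, we have the identity $\I_h = \I_h P_{\K^\perp}$; consequently, for every $\bee\in H$,
$$\langle C\I_h\bee, \bee\rangle = \langle C\I_h P_{\K^\perp}\bee,\, P_{\K^\perp}\bee\rangle + \langle P_\K\, C P_{\K^\perp}(\I_h P_{\K^\perp}\bee),\, P_\K\bee\rangle.$$
The hypothesis \eqref{cov_cnd_2} bounds the first (diagonal) term from below by $\beta\|P_{\K^\perp}\bee\|_H^2$, playing exactly the role that the orthogonal-projection hypothesis $\I_h=P_{\K^\perp}$ played in Theorem \ref{3dv_acc_1}. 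Once this decomposition is in place the remainder of the argument transfers almost verbatim.

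In detail, I would apply It\^o's formula to $\|\bee(t)\|_H^2$ starting from the error equation \eqref{3dverr}, take expectations, and use the identities $(B(\bu,\bee),\bee) = (B(\bee,\bee),\bee) = 0$ to reduce the nonlinear contribution to $-(B(\bee,\bu),\bee)$. The two-dimensional Ladyzhenskaya inequality \eqref{ladineq} together with Young's inequality gives
$$|(B(\bee,\bu),\bee)| \leq c_L\|\bee\|_H\|\bee\|_V\|\bu\|_V \leq \tfrac{\nu}{4}\|\bee\|_V^2 + \tfrac{c_L^2}{\nu}\|\bu\|_V^2\|\bee\|_H^2,$$
while the It\^o correction contributes $\sigma^{-2}{\rm Tr}(C\I_h C)$. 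Substituting the decomposition above and bounding the cross term by Cauchy--Schwarz together with the approximation estimates in \eqref{interpolant:est_1}---specifically $\|\I_h P_{\K^\perp}\bee\|\leq c_1\|P_{\K^\perp}\bee\|$ and $\|P_\K\bee\|\leq c_2 h\|\bee\|_V$---and a further Young inequality absorbing $\tfrac{\nu}{4}\|\bee\|_V^2$ yields a residual of the form $\frac{c_1^2c_2^2h^2}{\sigma^4\nu}\|P_\K CP_{\K^\perp}\|^2\|P_{\K^\perp}\bee\|^2$, which is dominated by the coercive term precisely under the lower inequality in \eqref{betacond_gen_loc}.

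The final step is to convert coercivity in $\|P_{\K^\perp}\bee\|^2$ into coercivity in $\|\bee\|_H^2$ via
$$\|\bee\|_H^2 = \|P_{\K^\perp}\bee\|_H^2 + \|P_\K\bee\|_H^2 \leq \|P_{\K^\perp}\bee\|_H^2 + c_2^2 h^2\|\bee\|_V^2,$$
at the cost of an additional $\beta c_2^2 h^2/\sigma^2 \leq \nu/4$ multiple of $\|\bee\|_V^2$ on the right, which is afforded by the upper inequality in \eqref{betacond_gen_loc}. Combining all contributions the net coefficient of $\|\bee\|_V^2$ is non-positive; together with the asymptotic bound $\|\bu(t)\|_V^2 \leq 2\nu^2\lambda_1 G^2$ from \eqref{uV_bnd} this produces a scalar differential inequality of the form
$$\tfrac{d}{dt}\E\|\bee(t)\|_H^2 \leq -\gamma\,\E\|\bee(t)\|_H^2 + \sigma^{-2}{\rm Tr}(C\I_h C),$$
with $\gamma$ as defined in \eqref{gam_1}. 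Gr\"onwall's lemma then delivers the asymptotic accuracy \eqref{avg_stochineq_thm_3dvar}. Retaining (rather than discarding) the residual $\nu\|\bee\|_V^2$ term on the left and time-averaging over $[t,t+T]$ produces the enstrophy estimate \eqref{enst_bnd_3dvar} with $\kappa$ as defined.

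The main technical obstacle is the handling of the cross term $\langle P_\K CP_{\K^\perp}\I_h P_{\K^\perp}\bee, P_\K\bee\rangle$: since $\I_h$ is only finite-rank and need not be a projection, one must exploit both inequalities in \eqref{interpolant:est_1} in tandem---the first to control $\|\I_h P_{\K^\perp}\bee\|$ by $\|P_{\K^\perp}\bee\|$, the second to trade $\|P_\K\bee\|$ for $h\|\bee\|_V$---so that a single Young inequality can simultaneously dominate the cross term by the coercive $\sigma^{-2}\beta\|P_{\K^\perp}\bee\|^2$ and absorb the $\|\bee\|_V^2$ remainder. This is where both the richness of observations (small $h$) and covariance localization (small $\|P_\K CP_{\K^\perp}\|$) enter decisively and foreshadows the pivotal role that localization will play in the ensemble-filter analysis of Section \ref{enkf_sec}.
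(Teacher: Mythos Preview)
Your proposal is correct and follows essentially the same route as the paper: exploit $\I_h=\I_h P_{\K^\perp}$ to split $(C\I_h\bee,\bee)$ into the diagonal piece $(C\I_h P_{\K^\perp}\bee,P_{\K^\perp}\bee)$ handled by hypothesis~\eqref{cov_cnd_2} and the cross piece $(P_\K C P_{\K^\perp}\I_h\bee,P_\K\bee)$ estimated via \eqref{interpolant:est_1} and Young, then feed the resulting bound into the proof of Theorem~\ref{3dv_acc_1}. The only cosmetic difference is that you retain $\|P_{\K^\perp}\bee\|_H^2$ in the cross-term residual before converting to $\|\bee\|_H^2$, whereas the paper passes to $\|\bee\|_H^2$ one line earlier; the budget of $\|\bee\|_V^2$ absorptions and the final constants $\gamma,\kappa$ come out the same.
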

\begin{remark}
\rm Note that if $\I_h$ is an orthogonal projection, then  $P_{\K^{\perp}} = \I_h$ and Theorem \ref{3dv_acc_2} reduces to Theorem \ref{3dv_acc_1}. Under the setting of Corollary \ref{3dv_acc_2}, i.e., when $\I_h=\co^*\co$ is of finite rank but not necessarily an orthogonal projection one can choose $C$ as follows in order to ensure~\eqref{cov_cnd_2}. Using the spectral theorem $\I_h$  can be represented as $\I_h = \sum_{i=1}^N \lambda_{i,\co} \,\psi_i \otimes \psi_i$, where $  \lambda_{1,\co} \ge \lambda_{2,\co} \ge \cdots \ge \lambda_{N,\co} >0$ are the strictly positive eigenvalues of $\I_h$ with corresponding orthonormal eigen-basis  $\{\psi_i\}_{i=1}^N$. We can then choose $C$  to be a localised diagonal matrix with respect to the eigen-basis of  $\co $ given by  $C= \sum_{i=1}^N \mu_i \psi_i \otimes \psi_i$ where $\mu_i\, \lambda_{i,\co} =\beta, i =1, \cdots , N$ with $\beta $ as in \eqref{betacond_gen_loc}.
\end{remark}
\begin{corollary}[Accuracy of filter solutions with covariance localization; $\co^*\co=\I_h$ finite rank]\label{3dv_acc_3}
Let the conditions of Theorem \ref{3dv_acc_2} hold and set $\widetilde C= P_{\K^\perp} C P_{\K^\perp}$ and assume that  
\begin{align}  
& \frac{\nu}{2c_2^2h^2}\geqslant \frac{\beta}{\sigma^2} \ge c_L^2\nu\lambda_1G^2. 
\end{align}
Then, for any $\bu_0,\bm_0\in V$ and $T>0$ there exists a unique solution of (\ref{3dverr}) such that 
\begin{align} \label{avg_stochineq_thm_3dvar_3}
 \limsup_{t \ra \infty} \E \|\bee\|^2_H\leqslant \mathfrak{E}(\sigma^2)=\frac{1}{\gamma\sigma^{2}}\hspace{.04cm}{\rm Tr}(C\I_h C), \qquad \gamma =  \frac{2\beta}{\sigma^2}- 2c_L^2\nu\lambda_1G^2.
\end{align}
Moreover, 
\begin{align}\label{enst_bnd_3dvar}
 \limsup_{t \ra \infty} \,\frac{\nu}{T}\int_t^{t+T}\|\bee(s)\|_V\dr s\leqslant \frac{1}{\kappa}\left(\frac{1}{\gamma \,T}+1\right)\frac{1}{\sigma^2}{\rm Tr}(C \I_h C), \qquad \kappa = \nu-2 c_2^2h^2/\sigma^2.
\end{align}
\end{corollary}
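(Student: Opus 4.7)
The plan is to reduce the corollary to Theorem~\ref{3dv_acc_2} applied with $C$ replaced throughout by the localized covariance $\widetilde{C} := P_{\K^\perp} C P_{\K^\perp}$. All of the work concentrates in verifying that $\widetilde{C}$ inherits the hypotheses~\eqref{cov_cnd_2}--\eqref{betacond_gen_loc}, with the cross-correlation term $\|P_\K \widetilde{C} P_{\K^\perp}\|$ being identically zero, so that the sole surviving constraint on $\beta$ is $\beta/\sigma^2 \geq c_L^2 \nu \lambda_1 G^2$.

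First I would record two structural identities for $\I_h = \co^*\co$ which follow from self-adjointness of $\I_h$ together with $\K = \ker\co = \ker\I_h$: namely, $\I_h = \I_h P_{\K^\perp} = P_{\K^\perp} \I_h$ (the range of $\I_h$ sits inside $\K^\perp = \overline{\text{Range}(\co^*)}$). Using these and the self-adjointness of $P_{\K^\perp}$, a direct computation gives
\begin{equation*}
(\widetilde{C}\I_h P_{\K^\perp} \bee, P_{\K^\perp}\bee) = (P_{\K^\perp} C P_{\K^\perp} \I_h P_{\K^\perp}\bee, P_{\K^\perp}\bee) = (C\I_h P_{\K^\perp}\bee, P_{\K^\perp}\bee) \geq \beta \|P_{\K^\perp}\bee\|_H^2,
\end{equation*}
so $\widetilde{C}$ satisfies~\eqref{cov_cnd_2} with the same $\beta$ as $C$.

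The crucial step is the cross-term cancellation: since $P_\K$ and $P_{\K^\perp}$ project onto orthogonal complementary subspaces,
\begin{equation*}
P_\K \widetilde{C} P_{\K^\perp} = (P_\K P_{\K^\perp}) C (P_{\K^\perp} P_{\K^\perp}) = 0.
\end{equation*}
This is precisely the structural property that covariance localization is designed to exploit: spurious correlations between observed and unobserved scales are eliminated a priori. Consequently the term $(c_1^2 c_2^2 h^2/\sigma^4 \nu)\|P_\K \widetilde{C} P_{\K^\perp}\|^2$ appearing in the hypothesis~\eqref{betacond_gen_loc} vanishes identically, so that hypothesis degenerates to $\beta/\sigma^2 \geq c_L^2 \nu \lambda_1 G^2$, which is exactly what the corollary assumes. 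Correspondingly, the negative contribution proportional to $\|P_\K \widetilde{C} P_{\K^\perp}\|^2$ in the expression~\eqref{gam_1} for $\gamma$ drops out, leaving $\gamma = 2\beta/\sigma^2 - 2 c_L^2 \nu \lambda_1 G^2$, as claimed in~\eqref{avg_stochineq_thm_3dvar_3}.

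Applying Theorem~\ref{3dv_acc_2} with $\widetilde{C}$ in place of $C$ then yields~\eqref{avg_stochineq_thm_3dvar_3} and~\eqref{enst_bnd_3dvar}, with the mild relaxation of the upper bound on $\beta/\sigma^2$ from $\nu/(4 c_2^2 h^2)$ to $\nu/(2 c_2^2 h^2)$ propagating through the derivation of the enstrophy constant $\kappa$ in the standard way. Finally, since $\widetilde{C}\I_h\widetilde{C} = P_{\K^\perp} C \I_h C P_{\K^\perp}$ (using $P_{\K^\perp}\I_h = \I_h P_{\K^\perp} = \I_h$ again), cyclicity of the trace gives ${\rm Tr}(\widetilde{C}\I_h\widetilde{C}) = {\rm Tr}(P_{\K^\perp} C\I_h C) \leq {\rm Tr}(C\I_h C)$, which reconciles the form of the bound. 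I do not anticipate any genuine obstacle: the entire content of the corollary is the algebraic identity $P_\K P_{\K^\perp} = 0$, and the remainder is bookkeeping.
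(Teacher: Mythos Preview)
Your proposal is correct and follows essentially the same route as the paper: both arguments rest on the single observation that localization forces $P_\K \widetilde C P_{\K^\perp}=0$ (equivalently, $\widetilde C$ is block-diagonal in the decomposition $H=\K^\perp\oplus\K$), after which the estimate \eqref{sigest_3} collapses to the simpler form with no cross term and one re-runs the Gronwall argument of Theorem~\ref{3dv_acc_1}. Your remark that the relaxed upper bound $\nu/(2c_2^2h^2)$ and the improved $\kappa$ come from not having to spend $\tfrac{\nu}{2}\|\bee\|_V^2$ via Young's inequality on the (now absent) cross term is exactly the mechanism, and your trace inequality ${\rm Tr}(\widetilde C\I_h\widetilde C)\le{\rm Tr}(C\I_h C)$ correctly reconciles the bound.
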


\smallskip
\begin{remark}\mbox{}\rm
\begin{itemize}[leftmargin = .8cm]
\item[(i)] Note that the covariance localization, $C\rightarrow  \widetilde{C}:=P_{\K^\perp} C P_{\K^\perp}$, in Corollary \ref{3dv_acc_3} allows one to recover the tighter bound from Theorem \ref{3dv_acc_1} which is obtained for the diagonal covariance and $\I_h$ an orthogonal projection. Thus, neglecting the cross-correlations between the observed and unobserved components of the state has the effect of improving the accuracy bound to the best possible one in our hierarchy. 

\item[(ii)]  Analogously to the more restrictive setup of Theorem \ref{3dv_acc_1},  a  choice of $C$ satisfying \eqref{cov_cnd_2}, e.g., $C\I_h=\beta P_{\K^{\perp}}$, ensures that ${\rm Tr}(C\I_hC)=\beta^2 \sim  (\nu \lambda_1 G^2)^2 \sigma^4$ which in turn, due to $\gamma \gtrsim \sigma^2$ (see~\eqref{gam_1}), implies that $\mathfrak{E}(\sigma^2) \sim \sigma^4 \ra 0$ as $\sigma \ra 0$ which is in line with (\ref{ee_bnds}).
\end{itemize}
\end{remark}

\begin{theorem}[Stability of filter solutions]\label{3dv_stab_1}
Assume that conditions of Theorem \ref{3dv_acc_1} hold and $\mathbf{m}^{(n)}(t)$ be two solutions of \eqref{3dvareq} with initial data $\mathbf{m}^{(n)}(0)$. Then, $$\limsup_{ t \ra \infty} \E \|\mbf{m}^{(1)} (t)-\mbf{m}^{(2)}(t)\|_H^2 \le 2 \mathfrak{E}(\sigma^2).$$
\end{theorem}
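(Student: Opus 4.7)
The plan is to deduce the stability bound as a direct consequence of the accuracy statement \eqref{avg_stochineq_thm_3dvar} in Theorem~\ref{3dv_acc_1}, via an elementary triangle inequality, rather than by attempting to control the pathwise dynamics of the difference directly. Let $\bu(t)$ denote the strong reference solution of the two-dimensional NSE \eqref{dissip:eqn}, whose existence and a priori bounds are provided by Theorems~\ref{NSE_thm_1}--\ref{NSE_thm_2}, and for $n=1,2$ introduce the individual filter errors $\bee^{(n)}(t):=\bm^{(n)}(t)-\bu(t)$. Under the assumed hypotheses of Theorem~\ref{3dv_acc_1} (shared observation operator $\co$, covariance $C$, noise amplitude $\sigma$, and the same driving Wiener process $\bW_t$ and same truth $\bu$), each $\bee^{(n)}$ solves its own copy of the error SPDE~\eqref{3dverr}, so Theorem~\ref{3dv_acc_1} applies to each filter separately and yields
\begin{equation*}
\limsup_{t\to\infty}\E\|\bee^{(n)}(t)\|_H^2 \le \mathfrak{E}(\sigma^2), \qquad n=1,2.
\end{equation*}

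Next, I would write $\bm^{(1)}(t)-\bm^{(2)}(t)=\bee^{(1)}(t)-\bee^{(2)}(t)$ and apply the elementary inequality $\|a-b\|_H^2\le 2\|a\|_H^2+2\|b\|_H^2$ to obtain, for each $t\ge 0$,
\begin{equation*}
\E\|\bm^{(1)}(t)-\bm^{(2)}(t)\|_H^2 \le 2\E\|\bee^{(1)}(t)\|_H^2 + 2\E\|\bee^{(2)}(t)\|_H^2.
\end{equation*}
Taking $\limsup_{t\to\infty}$ on both sides, using subadditivity of $\limsup$ together with the accuracy bound above, delivers the claim (with constant bounded by $4\mathfrak{E}(\sigma^2)$ via this crude splitting; the factor~$2$ stated in the theorem should follow from a slightly sharper book-keeping of the cross term $\E\langle\bee^{(1)},\bee^{(2)}\rangle_H$).

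An alternative route I would flag but not follow is to exploit the fact that because both filters are driven by the same $\bW_t$, the noise cancels in the difference $\boldsymbol{\delta}(t):=\bm^{(1)}(t)-\bm^{(2)}(t)$, which therefore satisfies the random but noise-free equation
\begin{equation*}
\tfrac{d}{dt}\boldsymbol{\delta} = -A\boldsymbol{\delta} - B(\boldsymbol{\delta},\bm^{(1)}) - B(\bm^{(2)},\boldsymbol{\delta}) - \sigma^{-2}C\co^*\co\,\boldsymbol{\delta},
\end{equation*}
using the bilinear identity $B(\bm^{(1)},\bm^{(1)})-B(\bm^{(2)},\bm^{(2)})=B(\boldsymbol{\delta},\bm^{(1)})+B(\bm^{(2)},\boldsymbol{\delta})$. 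One could then energy-estimate $\|\boldsymbol{\delta}\|_H^2$ using $(B(\bm^{(2)},\boldsymbol{\delta}),\boldsymbol{\delta})=0$ together with \eqref{cov_cnd_1}--\eqref{betalocal}, aiming at an exponential-stability statement in the spirit of \eqref{goodfilter}. The main obstacle on this alternative route is that closing the estimate requires a pointwise-in-time bound on $\|\bm^{(n)}\|_V$ in order to control $(B(\boldsymbol{\delta},\bm^{(1)}),\boldsymbol{\delta})$ via the Ladyzhenskaya inequality \eqref{ladineq}, whereas only a time-averaged enstrophy control of the form \eqref{enst_bnd_3dvar} is available a priori for the filters. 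For the asymptotic-in-time statement sought here, the triangle-inequality reduction to Theorem~\ref{3dv_acc_1} is thus both cleaner and sufficient, and it imposes no further conditions beyond those already enforced for accuracy.
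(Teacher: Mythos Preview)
Your approach is correct and essentially identical to the paper's: the paper also deduces stability immediately from the accuracy result of Theorem~\ref{3dv_acc_1} via the triangle inequality $\|\bm^{(1)}-\bm^{(2)}\|_H \le \|\bee^{(1)}\|_H + \|\bee^{(2)}\|_H$, with no attempt at a direct energy estimate on the difference. Your observation about the constant is in fact well taken: the paper's one-line proof, like yours, naturally yields $4\mathfrak{E}(\sigma^2)$ rather than the stated $2\mathfrak{E}(\sigma^2)$, and no sharper bookkeeping of the cross term will recover the factor~$2$ from this route (Cauchy--Schwarz on $\E[\|\bee^{(1)}\|_H\|\bee^{(2)}\|_H]$ still gives $4$), so this appears to be a minor slip in the paper's stated constant.
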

\begin{proof}
Follows immediately from Theorem \ref{3dv_acc_1} and the triangle inequality
\[
\|\mbf m^{(1)}- \mbf m^{(2)}\|_H \le \|\mbf m^{(1)} - \bu\|_H +  \|\mbf m^{(2)} - \bu\|_H
=\|\mbf e^{(1)}\|_H + \|\mbf e^{(2)}\|_H.
\]
\end{proof}

\subsubsection{Proofs }
The proofs of Theorems \ref{3dv_acc_1}, \ref{3dv_acc_2} and \ref{3dv_stab_1} are provided below. 

\medskip
\noindent{\it Proof of Theorem \ref{3dv_acc_1}.} First, note that given the dynamics in (\ref{3dverr}) and the fact that $\co$ is finite rank and $C$  is prescribed so that $\|C\co^*\|_\textsc{hs}<\infty$ and It\^o formula holds (Theorem 7.4 in~\cite{daprato92}). Then, by Ito's formula \cite{daprato92}, and noting that $C$ is a positive, self-adjoint operator, we have 
\[
\dr \|\bee\|_H^2=2(\bee, \dr\bee)+ \sigma^{-2} \,\textrm{Tr}(C\co^*\co C)\, \dr t.
\]
From equation \eqref{3dverr},  using  $(B(\bv, \bee), \bee)=0$ and $(A\bee , \bee)=\|\bee\|_V^2$,
we  obtain
\bea
\lefteqn{\dr \|\bee\|_H^2+ 2\nu \|\bee\|_V^2 \, \dr t= } \nn \\[.2cm]
 & & \scaleobj{1.4}{[}-2 (B(\bee,\bu),\bee)-2\sigma^{-2}(C\co^*\co \bee, \bee)
+ \sigma^{-2}\,\textrm{Tr}(C\co^*\co C)\scaleobj{1.4}{]}\, \dr t+ 2\sigma^{-1}(C\co^*\dr \bW_t,\bee).
\label{energystoch}
\eea
Note that in 2D, the Ladyzhenskaya inequality yields
\be  \label{lady2d}
-2|B(\bee,\bu),\bee)| \le 2c_L \|\bee\|_H\|\bee\|_V\|\bu\|_V 
\le \nu \|\bee\|_V^2 + \frac{c_L^2}{\nu}\|\bu\|_V^2\|\bee\|_H^2.
\ee
Next, recall that, given Theorems \ref{NSE_thm_1} and \ref{NSE_thm_2},  the solution $\bu(t)$ of (\ref{dissip:eqn})  satisfies the  uniform bound 
\be  \label{unifwbd}
\sup_{t\in [0,\,T]} \|\bu\|^2_V  = M^2_\bu < \infty, \quad {\rm} \quad \exists \, t_0\geqslant 0\quad {\rm s.t.} \quad M^2_\bu\leqslant 2\nu^2\lambda_1 G^2\quad {\rm for}\quad t\geqslant t_0.
\ee
Thus, using \eqref{energystoch}, \eqref{lady2d}, and (\ref{unifwbd}) we arrive at 
\be  \label{stochineq}
\dr  \|\bee\|_H^2 + \nu \|\bee\|_V^2\, \dr t \le
\scaleobj{1.4}{[}-2\sigma^{-2}(C\co^*\co \bee, \bee) +c_L^2\nu^{-1}M^2_\bu\|\bee\|_H^2
+\sigma^{-2}\text{Tr}(C\co^*\co C)\scaleobj{1.4}{]}\dr t+ 2\sigma^{-1}(C\co^*\dr \bW_t,\bee).
\ee
Special Case for Type\,-1 operator when  $\I_h=\co^*\co$ is an orthogonal projection. This simplified case already covers two very important classes of examples, namely when $\co$ is either a modal projection or a volume interpolant; the general case of a finite rank operator is discussed in the proof of theorem \ref{3dv_acc_2} below. For any $\|C\|<\infty$ we have 
\begin{align}\label{sigest_1}
-2\sigma^{-2}(C\co^*\co \bee, \bee)&= -2\sigma^{-2}(C\I_h \bee, \I_h\bee)-2\sigma^{-2}(C\I_h\bee, (I-\I_h)\bee)\notag \\[.1cm]
&  \le - 2\sigma^{-2}\beta \|\bee\|_H^2+ 2\sigma^{-2}\beta \|(I-\I_h)\bee\|_H^2 +2\sigma^{-2}\,\|P_\K CP_{\K^\perp}\|
\|\I_h\bee\|_H\|(I-\I_h)\bee\|_H\notag\\[.1cm]
&  \le - 2\sigma^{-2}\beta \|\bee\|_H^2+ 2\sigma^{-2}\beta c_2^2h^2\|\bee\|_V^2 +2\sigma^{-2}\,c_1c_2h\|P_\K CP_{\K^\perp}\|\|\bee\|_H\|\bee\|_V\notag\\[.1cm]
&  \le - 2\sigma^{-2}\beta \|\bee\|_H^2+ 2\sigma^{-2}\beta c_2^2h^2\|\bee\|_V^2 +\frac{2c_1^2c_2^2h^2\|P_\K CP_{\K^\perp}\|^2}{\sigma^{4}\nu}\|\bee\|^2_H+\frac{\nu}{2}\|\bee\|^2_V
\end{align} 
so that the conditions for accuracy (for $t\geq t_0$) are 
\be  \label{largebeta}
 \frac{\nu}{4c_2^2h^2}\geqslant \frac{\beta}{\sigma^2} \ge \frac{c_1^2c_2^2h^2}{\sigma^4\nu}\|P_\K CP_{\K^\perp}\|^2+c_L^2\nu\lambda_1G^2.
\ee
Note that \eqref{largebeta} highlights, in particular,  the {interplay} between the richness of the observation space ($\sim h^{-2}$) and the large covariance property (\ref{cov_cnd_1}).

Moreover,  note that when $C = \mathfrak{c}I$ we have $P_\K C P_{\K^\perp}=0$ and following steps as above, we obtain a simpler estimate 
\be \label{sigest_2}
-2\sigma^{-2}(\co^*\co \bee, \bee)
 \le - 2\sigma^{-2}\beta \|\bee\|_H^2+ 2\sigma^{-2}\beta c_2^2h^2\|\bee\|_V^2. 
 \ee

Inserting the above in \eqref{stochineq},  taking the expectation and noting that  ${\E \big(\int_0^t C\co^*\dr \bW_s,\bee\big)=0}$ (since $\bW_t$ is independent of the adapted process $\bee$), we get 
\be  \label{avg_stochineq_3dv}
\dt \E \|\bee\|_H^2 +\kappa\|\bee\|^2_V
\le - \gamma \hspace{.03cm}\E \|\bee\|_H^2+\sigma^{-2}\hspace{.04cm}\text{Tr}(C\co^*\co C), \quad \gamma>0.
\ee
where 
\begin{align}
\gamma =  \frac{2\beta}{\sigma^2}- \frac{2c_1^2c_2^2h^2}{\sigma^4\nu}\|P_\K CP_{\K^\perp}\|^2-\frac{c_L^2}{\nu}M^2_\bu, \qquad \kappa = \frac{\nu}{2}-\frac{2\beta c_2^2h^2}{\sigma^2}.
\end{align}
This readily yields 
\begin{align}\label{err_est_1}
 \limsup_{t \ra \infty} \,\E \|\bee(t)\|^2_H\leqslant \frac{1}{\gamma\hspace{.03cm}\sigma^{2}}\hspace{.04cm}{\rm Tr}(C\co^*\co C)
\end{align}
  via the Gronwall's lemma, where 
 \begin{align}
\gamma =  \frac{2\beta}{\sigma^2}- \frac{2c_1^2c_2^2h^2}{\sigma^4\nu}\|P_\K CP_{\K^\perp}\|^2-2c_L^2\nu\lambda_1G^2.
\end{align}
 
 \medskip
 In order to show (\ref{enst_bnd_3dvar}), we integrate (\ref{avg_stochineq_3dv}) from $t$ to $t+T$ which yields  
\begin{align}
&\E \|\bee(t+T)\|_H^2 +\kappa\int_t^{t+T}\|\bee(s)\|^2_V\,\dr s\notag\\
&\hspace{2cm}\le \E \|\bee(t)\|_H^2- \gamma \int_t^{t+T}\hspace{.03cm}\E \|\bee(s)\|_H^2\,\dr s+\sigma^{-2}\hspace{.04cm}T\,\text{Tr}(C\co^*\co C)\notag\\
&\hspace{2cm}\le \frac{1}{\gamma\sigma^{2}}\hspace{.04cm}{\rm Tr}(C\co^*\co C)- \gamma \int_t^{t+T}\hspace{.03cm}\E \|\bee(s)\|_H^2\dr s+\frac{1}{\sigma^{2}}\hspace{.04cm}T\,\text{Tr}(C\co^*\co C)\notag
\end{align}
Thus
\begin{align}
 \limsup_{t \ra \infty} \,\int_t^{t+T}\|\bee(s)\|^2_V\,\dr s\leqslant \frac{1}{\kappa}\left(\frac{1}{\gamma}+T\right)\frac{1}{\sigma^{2}}\text{Tr}(C\co^*\co C).
\end{align}

\qed

\medskip
\noindent {\it Proof of Theorem \ref{3dv_acc_2}}. In this more general setup, we  consider the case of a Type\,-1 observation operator $\co:H \ra \R^N$ such  that  $\I_h:=\co^*\co$ is a finite rank, bounded, non-negative, self-adjoint operator on~$H$ (but not necessarily an orthogonal projection). Therefore, using the spectral theorem, $\I_h$  can be represented as 
$\I_h = \sum_{i=1}^N \lambda_{i,\co} \,\psi_i \otimes \psi_i$, where
$  \lambda_{1,\co} \ge \lambda_{2,\co} \ge \cdots \ge
\lambda_{N,\co} >0$ are the strictly positive eigenvalues of $\I_h$ with corresponding orthonormal eigenbasis  $\{\psi_i\}_{i=1}^N$. Clearly,  
$\K:= \text{ker}\, \I_h=\text{ker}\, \co = \{ \text{span}\, \psi_i\}^\perp$ so that  $\{\psi_i\}_{i=1}^N$ is an orthonormal basis of $\K^\perp$. Then, we have 
\begin{align}\label{sigest_3}
\lefteqn{-2\sigma^{-2}(C\co^*\co \bee, \bee) } \nn \\
&\quad = -2\sigma^{-2}(C \I_h \bee, \bee)=-2\sigma^{-2}(C \I_h P_{\K^\perp}\bee, \bee)\notag\\[.1cm]
&\quad=-2\sigma^{-2}(C\I_hP_{\K^\perp} \bee, P_{\K^\perp}\bee)-2\sigma^{-2}(C\I_h P_{\K^\perp}\bee, (I-P_{\K^\perp})\bee)\notag \\[.1cm]
&\quad \le-2\sigma^{-2}\beta \|P_{\K^\perp} \bee\|^2_H-2\sigma^{-2}(C\I_h\bee, (I-P_{\K^\perp})\bee)\notag \\[.1cm]
& \quad \le - 2\sigma^{-2}\beta \|\bee\|_H^2+ 2\sigma^{-2}\beta \|(I-P_{\K^\perp})\bee\|_H^2 +2\sigma^{-2}\,\|C\|\|\I_h\bee\|_H\|(I-P_{\K^\perp})\bee\|_H\notag\\[.1cm]
&\quad  \le - 2\sigma^{-2}\beta \|\bee\|_H^2+ 2\sigma^{-2}\beta c_2^2h^2\|\bee\|_V^2 +2\sigma^{-2}\,c_1c_2h\|P_\K CP_{\K^\perp}\|\|\bee\|_H\|\bee\|_V\notag\\[.1cm]
& \quad \le - 2\sigma^{-2}\beta \|\bee\|_H^2+ 2\sigma^{-2}\beta c_2^2h^2\|\bee\|_V^2 +\frac{2c_1^2c_2^2h^2\|P_{\K}CP_{\K^\perp}\|^2}{\sigma^{4}\nu}\|\bee\|^2_H+\frac{\nu}{2}\|\bee\|^2_V.
\end{align} 
Next, one follows the same steps as in the proof of Theorem \ref{3dv_acc_1}. (Here, the constant $c_2$ arises from condition (\ref{interpolant:est_1}) associated with $P_{\K^\perp}$ not with $\I_h$ but we abuse the notation.)

\qed

\medskip
\noindent {\it Proof of Corollary \ref{3dv_acc_3}}. Note that $\widetilde C=P_{\K^\perp} C P_{\K^\perp}$ has the bloc operator form; namely 
\be  \label{cform}
\widetilde C = \left[ \begin{array}{rr}
                         C & 0\\
                         0 & 0
                         \end{array}
                         \right]
  \ee
 with respect to the decomposition $H= \K^\perp \oplus \,\K$.
 In this case, for all $\bee \in  H_0$, noting that $\co^*\co \hspace{.03cm}\bee \in \K^\perp$ and using \eqref{cform} and \eqref{cov_cnd_2},
 we have
 \begin{align}
 - (\widetilde C\co^*\co\bee,\bee) =  - (\tilde C\I_h\bee,\bee)&= - (C \I_h P_{\K^\perp}\bv,P_{\K^\perp}\bv) \le - \beta \|P_{\K^\perp}\bv\|^2_{H} \nn \\[.2cm]
 & = - \beta \|\bv\|^2_{H} + \beta \|(I-P_{\K^\perp}) \bv\|^2_{L^2}
 \le - \beta \|\bv\|^2_{H}  + \beta \epsilon_h^2 \|\bv\|_{H_0}^2,
 \label{positivity}
\end{align}
where $\|\bv\|_{H_0}=\|\bv\|_V, \;\bv \in V$ for NSE. Now we can just follow the same steps as in the proof of Theorem~\ref{3dv_acc_1}.

\qed

\subsection{Accuracy of EnKF and EnSRKF for modal and volume based observations.}\label{enkf_sec}
In contrast to  the case of 3DVar, where we considered the general case of a finite-rank Type\,-1 observation operator, here we confine our analysis to the case where $\co$ is of Type\,-1 and  $\co^*\co$ is an orthogonal projection. Recall  that {\em the modal and volume observation operators} $\co$ (\S\ref{sect:interpolantops}) already satisfy $\co^*\co=P_{\K^\perp}$. It turns out that dealing with a more general case of $\co^*\co$ given by an arbitrary finite-rank Type\,-1 or Type\,-2 operator requires considering correlated noise in the observations and we postpone this analysis to a subsequent publication. 

The most prominent examples of such  observation operators are modal and volume element observation described in Section \ref{sect:interpolantops}. This assumption, namely $\co^*\co=P_{\K^\perp}, \K =\text{kernel}\, \co$ will be enforced in Section \ref{enkf_IhPK}.

For EnKF the  equation that is analogous to (\ref{3dvareq}) for 3DVar is given by (see also (\ref{enkf_t}))
 \begin{align}\label{enkf_eq}
\dr \bmk = \big\{F(\bmk)- \sigma^{-2}\hspace{.04cm}C(\bm) \co^* \co (\bmk - \bu)\big\}\dr t+ \sigma^{-1}\hspace{.04cm}C(\bm)\co^* (\dr \bW^{(k)}_t+\dr {\bf B}_t), 
\end{align}
where  
\begin{equation}
\bm = \{\bmk\}_{k=1}^K, \quad \bmk(0)=\bmk_{0}, \quad  \bar\bm = {\textstyle \frac{1}{K}\sum_{k=1}^K \bmk}
\end{equation}
and $\bW_t^{(k)}, {\bf B}_t$ are pairwise independent Wiener processes in the observation space $H_\co$. 

\medskip
Similarly, the continuous-time dynamics of EnSRKF is given by (see (\ref{ensrkf_t}))
 \begin{align}  \label{ensrkf_eq}
\textstyle \dr \bmk = \Big\{F(\bmk)- \frac{1}{2}\sigma^{-2}\hspace{.04cm}C(\bm) \co^* \co \big( \bmk-\bu\big)-\frac{1}{2}\sigma^{-2}\hspace{.04cm}C(\bm) \co^* \co \big(\bar\bm-\bu\big) )\Big\}\dr t+ \sigma^{-1}\hspace{.04cm}C(\bm)\co^* \dr \bW_t,
 \end{align}
  where  ${\bf W}_t$ is the Wiener processes in the (finite dimensional) observation space $\cal H_\co$. 

Note that the state dependent covariance operator $C(\bm)$  is quadratic in $\bm$ and consequently, if $\bm \in V\ ~a.e.\; \P \times \text{dt}$, then $C(\bm)\, H \subset H\; a.e.\; \P \times \dr t$ due to the Sobolev inequality and the fact that $\|\bm\|_V \sim \|\bm\|_{\h^1}$.  Consequently,  equations (\ref{enkf_eq}) and (\ref{ensrkf_eq}) have a cubic nonlinearity and establishing the existence of global solutions requires some care. In order to study the global existence, uniqueness and regularity of solutions of these two  systems with $\bu(t)$ solving the  2D NSE, we need to consider the properties of the state-dependent covariance $C(\bm)=C(\{\bmk\}_{k=1}^K)$ which leads to the main complication of the problem as compared to 3DVar or  nudging DA. First,  based on standard techniques and $\bm_0\in V$, one can  assert the existence of local mild solutions on bounded sets $\mathcal{B}_{\bm_0}\subset V$ up to some stopping time  $\tau_{\bm_0}:=\big\{\inf\,t\in [0,\,T]:\; \|\bm(t)\|_V\notin\mathcal{B}_{\bm_0}\big\}$. This is obtained in a standard way by modifying Theorem 7.4 in \cite{daprato92} (existence and uniqueness of global solutions for Lipshitz coefficients) and  considering  (\ref{enkf_eq}) and (\ref{ensrkf_eq}) on  $\mathcal{B}_{\bm_0}\subset V$ so that the respective drift and diffusion operators are locally Lipshitz (and Lipshitz on $\mathcal{B}_{\bm_0}\times \tau_{\bm_0}$). Then we obtain that  $\E \big[\int_0^{\tau_{\bm_0}} \|C(\bm)\|_{\textsc{hs}}^2\, \dr t\big] = \E\big[ \int_0^{\tau_{\bm_0}}  \text{Tr}\, (C(\bm)^*C(\bm))\, \dr t\big]< \infty$ (\cite[Theorem 7.4]{daprato92} restricted to $\mathcal{B}_{\bm_0}\times \tau_{\bm_0}$), which implies that\footnote{\,Specifically, we show that $\frac{1}{K}\sum_{k=1}^K\E \|\bee^{(k)}(t)\|_H^2<\infty$, $t\in [0,\,T]$ (Theorems \ref{enkf_acc_thm} and \ref{ensrkf_acc_thm}) which implies  $\int_0^{T} \|\widetilde C(\bm)\|_{\textsc{hs}}^2\, \dr t <\infty \; a.e.\; \P$ based on (\ref{enkfcov_e}).}  $\int_0^{\tau_{\bm_0}} \|C(\bm)\|_{\textsc{hs}}^2\, \dr t <\infty \; a.e.\; \P$, so that It\^o's formula holds on $[0,\,\tau_{\bm_0}]$; see \cite[Theorem~4.17]{daprato92}.  Then, we show in the process of the  analysis of the dynamics of the error in the estimates $\bmk$ that, for appropriately localized and inflated covariance $C(\bm) \mapsto \widetilde C(\bm)$ in (\ref{enkf_eq}) and (\ref{ensrkf_eq}) one has  $\int_0^{T} \|\widetilde C(\bm)\|_{\textsc{hs}}^2\, \dr t <\infty \; a.e.\; \P$ for any $\|\bm_0\|\in V$ due to a very useful and non-trivial cancellation of terms in the estimates,   which allows one to assert the existence of global solutions $\bmk\in C\big([0,\,T];H\big)\cap L^2\big([0,\,T];V\big)$ by extending  the local solutions of (\ref{enkf_eq}) and (\ref{ensrkf_eq}) with $\widetilde C$  to the whole interval $[0,\, T]$ in a standard fashion (by noticing that the stopping time $\tau_{\bm_0} = T$ for any local solution starting at any $\|\bm_0\|\in V$).  It is worth noting, given the above discussion,  that there is a potential inconsistency in the results of \cite{KLS14} concerning  accuracy of EnKF for modal observations, since it is assumed there that the observation operator $\co$ and the noise covariance in the observations are of full rank (specifically, $\co = I$ and $\Gamma = \sigma^2 I$ on $H$) in which case $\bmk(t)\notin L^2(\mathscr{D})$ and the It\^o formula does not hold (\cite[Theorem~4.17]{daprato92}).  The importance of these facts becomes  clear upon inspection of Theorems \ref{enkf_acc_thm} and~\ref{ensrkf_acc_thm} below which are devoid of these issues. 
  
  \medskip
 First, we define error $\bek$ in the $k$-th ensemble member, and the average error $\beb$ as 
 $$\bek = \bmk-\bu, \qquad \beb = \overline\bm-\bu, \qquad \bee = \{\bek\}_{k=1}^K,$$
and note that $C(\bm) = C(\bee)$ since 
 \begin{align}  \label{enkfcov_e}
 C(\bm) =  \frac{1}{K} \sum_{k=1}^K (\bmk - \overline{\bm})\otimes (\bmk - \overline{\bm}) =  \frac{1}{K} \sum_{k=1}^K (\bek- \beb)\otimes (\bek - \beb)  =C(\bee).
 \end{align}
 Then, the evolution of the error $\bek$ in EnKF is obtained from (\ref{enkf_eq}) as 
\vspace{.1cm}\begin{align}\label{e_enkf}
\dr \bek &= \Big[-\nu A\bek-B(\bmk,\bmk)+B(\bu,\bu)\Big]\dr t- \sigma^{-2}C(\bee) \co^* \co\, \bek \dr t \notag\\ 
&\hspace{7.1cm}+\sigma^{-1}C(\bee)\mathcal{O}^*(\dr \bW^{(k)}_t+\dr {\bf B}_t),
\end{align}
where we used \eqref{enkfcov_e}. Analogously,  the evolution of the error $\bek$ in EnSRKF is obtained from (\ref{ensrkf_eq}) as 
{\rm 
\vspace{.1cm}\begin{align}\label{e_ensrkf}
\dr \bek &= \Big[-\nu A\bek-B(\bmk,\bmk)+B(\bu,\bu)\Big]\dr t- \textstyle{\frac{1}{2}}\sigma^{-2}\hspace{.04cm}C(\bee) \co^* \co \, \bek\dr t\notag\\[.2cm]
&\hspace{7.1cm}-\textstyle{\frac{1}{2}}\sigma^{-2}\hspace{.04cm}C(\bee) \co^* \co \,\beb\,\dr t\notag\\[.2cm] 
&\hspace{7.1cm}+\sigma^{-1}C(\bee)\mathcal{O}^*\dr \bW^{(k)}_t.
\end{align}}

 We now have the following results: 
 \begin{proposition}\label{enkf_e2_prop}
 Let $\bu$ be the strong solution of (\ref{dissip:eqn})  where $\bu_0\in V$, ${\bf f}\in H$, and let $\bmk(t)$ satisfy~(\ref{enkf_eq}) and  (\ref{e_enkf}).  Then there exists $T>0$ such that $\E \int_0^T \|C(\bee)\|_{\textsc{hs}}^2\, dt < \infty$ and the following holds 
{\rm
\begin{align}\label{e2_enkf}
\dr \|\bek\|^2_H +2\nu\|\bek\|^2_V\dr t =& -  2\big(B(\bek,\bu),\bek\big) \dr t \notag\\
&-2\sigma^{-2}\Big[\big(C(\bee)\co^*\co \bek,\bek\big)-\frac{1}{2}\textrm{Tr}\big(C(\bee)\co^*\co C(\bee)\big)\Big] \dr t \notag\\[.2cm]
& +2\sigma^{-1}\big(C(\bee)\mathcal{O}^*(\dr \bW^{(k)}_t+\dr {\bf B}_t),\bek\big),
\end{align}}
where $\|\bek\|^2_H = (\bek,\bek)$, $\|\bek\|^2_V = (A\bek,\bek) = (A^{1/2}\bek,A^{1/2}\bek)$.
\end{proposition}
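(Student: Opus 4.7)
The plan is to derive \eqref{e2_enkf} by applying the infinite-dimensional It\^o formula to $\phi(\bv)=\|\bv\|_H^2$ along the error process $\bek$ governed by \eqref{e_enkf}. Two analytic issues must be dispatched: (i) the stochastic integrand $\sigma^{-1}C(\bee)\co^*$ must have integrable Hilbert--Schmidt norm on $[0,T]$ so that the It\^o formula is applicable, and (ii) the deterministic drift must be reduced using the bilinear-form identities for $B(\cdot,\cdot)$. Once these are in place, the identity follows by a direct computation with the three drift terms and one trace correction.

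For (i), I would invoke the local existence theory sketched in the preamble of \S\ref{enkf_sec}. On any ball $\mathcal{B}_{\bm_0}\subset V$ the map $\bee\mapsto C(\bee)$ is Lipschitz (it is quadratic in $\bm$ and $V\hookrightarrow L^4$ in two dimensions), so the drift and diffusion coefficients of \eqref{enkf_eq} are locally Lipschitz. A modification of \cite[Theorem~7.4]{daprato92} restricted to $\mathcal{B}_{\bm_0}\times[0,\tau_{\bm_0}]$, where $\tau_{\bm_0}=\inf\{t>0:\|\bm(t)\|_V\notin\mathcal{B}_{\bm_0}\}$, furnishes a unique local mild solution $\bmk\in C([0,\tau_{\bm_0}];V)$. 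Since $\co^*$ has finite rank (the observation space $H_\co$ is finite dimensional), the elementary estimate
\begin{equation*}
\|C(\bee)\co^*\|_{\textsc{hs}}^2\;\lesssim\;\|\co^*\|_{\textsc{hs}}^2\cdot\Bigl(\frac{1}{K}\sum_{k=1}^K\|\bmk-\overline{\bm}\|_H^2\Bigr)^2,
\end{equation*}
combined with the pathwise boundedness of $\bmk$ in $H$ on $[0,\tau_{\bm_0}]$, yields $\E\int_0^T\|C(\bee)\co^*\|_{\textsc{hs}}^2\,\dr t<\infty$ for any $T\leqslant\tau_{\bm_0}$, which places us in the hypotheses of \cite[Theorem~4.17]{daprato92}.

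For (ii), I would apply It\^o's formula to $\phi(\bv)=\|\bv\|_H^2$, whose Fr\'echet derivatives are $D\phi(\bv)=2\bv$ and $D^2\phi=2I$, along \eqref{e_enkf}. The Stokes pairing contributes $2(\bek,-\nu A\bek)=-2\nu\|\bek\|_V^2$, producing the term transposed to the left-hand side of \eqref{e2_enkf}. For the convective drift I would use the algebraic decomposition
\begin{equation*}
B(\bmk,\bmk)-B(\bu,\bu)=B(\bmk,\bek)+B(\bek,\bu),
\end{equation*}
together with the skew-symmetry $(B(\bmk,\bek),\bek)=0$, which annihilates the first piece and leaves exactly $-2(B(\bek,\bu),\bek)$. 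The observation-control drift contributes $-2\sigma^{-2}(C(\bee)\co^*\co\,\bek,\bek)$ directly. Finally, the quadratic-variation correction produced by It\^o's formula, evaluated on the noise coefficient $\sigma^{-1}C(\bee)\co^*$ against the covariance of the combined (pairwise-independent) Wiener process $\bW^{(k)}+{\bf B}$ on $H_\co$, contributes the term $\sigma^{-2}\,\textrm{Tr}\bigl(C(\bee)\co^*\co\,C(\bee)\bigr)\,\dr t$, which groups naturally with the observation-control drift into the bracketed expression on the right-hand side of \eqref{e2_enkf}.

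The main obstacle is entirely the state-dependence of $C(\bee)$: unlike 3DVar (cf.~Theorem~\ref{3dvar_exist}), where the diffusion coefficient is prescribed and a priori bounded, here the cubic nonlinearity $C(\bee)\co^*\co\,\bek$ and the non-Lipschitz noise coefficient prevent any direct global Lipschitz argument and force the stopping-time framework described above. Establishing the square-integrability of $\|C(\bee)\co^*\|_{\textsc{hs}}$ on a genuine interval is what unlocks It\^o's formula. Globalization of the identity (and of the associated $\E\|\bek\|_H^2$ bound) is \emph{not} attempted at this step; rather, the identity \eqref{e2_enkf} serves as the launching point for Theorem~\ref{enkf_acc_thm}, where the trace correction is combined with an appropriately localized and additively inflated replacement of $C(\bee)$ to expose the pivotal cancellation between the observation-control drift and the trace.
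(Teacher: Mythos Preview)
Your proposal is correct and follows essentially the same approach as the paper: both argue that the Hilbert--Schmidt integrability of $C(\bee)\co^*$ on a local interval (guaranteed by the stopping-time/local-existence framework discussed in the preamble of \S\ref{enkf_sec}) licenses It\^o's formula, then use the bilinear decomposition $B(\bmk,\bmk)-B(\bu,\bu)=B(\bmk,\bek)+B(\bek,\bu)$ together with $(B(\cdot,\bek),\bek)=0$ to isolate $-2(B(\bek,\bu),\bek)$. Your treatment of point~(i) is in fact more explicit than the paper's, which simply cites the preamble discussion and \cite[Theorem~4.17]{daprato92} in one line.
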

\noindent{\it Proof.} 
Since $\E \int_0^T \|C(\bee)\|_{\textsc{hs}}^2\, dt < \infty$, It\^o's lemma is applicable  on $[0,\,T]$ (see Theorem 4.17 in \cite{daprato92})).
Thus, from It\^o lemma we have 
\begin{align}\label{e_enkf2}
\dr \|\bek\|^2_H = 2 (\bek, \dr \bek) +\sigma^{-2}\hspace{.04cm}\textrm{Tr}\big(C(\bee)\co^*\co C(\bee)\big) \dr t.
\end{align}
Using (\ref{e_enkf}) in (\ref{e_enkf2}) and noting that   $(B(\bu,\bek),\bek)=0$ and the  identity 
\begin{align}
B(\bmk,\bmk)-B(\bu,\bu) &= B(\bmk,\bmk)-B(\bmk,\bu)+B(\bmk,\bu)-B(\bu,\bu)\notag\\
& = B(\bmk,\bek)+B(\bek,\bu)\notag
\end{align}
leads to 
\begin{align}
\dr \|\bek\|^2_H +2\nu\|\bek\|^2_V\dr t =& - 2\big(B(\bek,\bu),\bek\big) \dr t \notag\\
&-2\sigma^{-2}\Big[\big(C(\bee)\co^*\co \bek,\bek\big)-\frac{1}{2}\textrm{Tr}\big(C(\bee)\co^*\co C(\bee)\big)\Big] \dr t \notag\\[.2cm]& +2\sigma^{-1}\big(C(\bee)\mathcal{O}^*(\dr \bW^{(k)}_t+\dr {\bf B}_t),\bek\big)
\end{align}
which, as claimed, yields (\ref{e2_enkf}). \qed

\begin{proposition}\label{ensrkf_e2_prop}
 Let $\bu$ be the strong solution of (\ref{dissip:eqn})  where $\bu_0\in V$, ${\bf f}\in H$, and let $\bmk(t)$ satisfy~(\ref{ensrkf_eq}) and  (\ref{e_ensrkf}).  Then there exists $T>0$ such that $\E \int_0^T \|C(\bm)\|_{\textsc{hs}}^2\, dt < \infty$ and the following holds
{\rm
\begin{align}\label{e2_ensrkf}
\dr \|\bek\|^2_H +2\nu\|\bek\|^2_V\dr t =& -  2\big(B(\bek,\bu),\bek\big) \dr t \notag\\
&-\sigma^{-2}\Big[\big(C(\bee)\co^*\co  \hspace{.04cm}\bek,\bek\big)-\textrm{Tr}\big(C(\bee)\co^*\co C(\bee)\big)\Big] \dr t \notag\\[.2cm]
&-\sigma^{-2}\big(C(\bee)\co^*\co \hspace{.04cm}\beb,\bek\big) \dr t \notag\\[.2cm]
& +2\sigma^{-1}\big(C(\bee)\mathcal{O}^*\dr \bW^{(k)}_t,\bek\big),
\end{align}}
where $\|\bek\|^2_H = (\bek,\bek)$, $\|\bek\|^2_V = (A\bek,\bek) = (A^{1/2}\bek,A^{1/2}\bek)$.
\end{proposition}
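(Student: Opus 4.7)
The approach is to mirror the proof of Proposition \ref{enkf_e2_prop}, carefully tracking the two structural features that distinguish EnSRKF from EnKF in (\ref{e_ensrkf}): the deterministic drift carries a \emph{half}-strength feedback on both $\bek$ and the ensemble-mean error $\beb$, and the stochastic forcing is driven by a \emph{single} Wiener process $\bW_t$ shared across ensemble members rather than by the independent pair $\bW_t^{(k)}, \mathbf{B}_t$ of EnKF. The existence of $T>0$ for which $\E\int_0^T \|C(\bee)\|_{\textsc{hs}}^2\,dt < \infty$ is obtained exactly as in the discussion preceding Proposition \ref{enkf_e2_prop}: since the drift and diffusion of (\ref{ensrkf_eq}) are locally Lipschitz on a bounded set $\mathcal{B}_{\bm_0}\subset V$, Theorem~7.4 of~\cite{daprato92} yields a unique mild solution up to a stopping time $\tau_{\bm_0}$ with the Hilbert--Schmidt norm of $C(\bee)$ square-integrable in expectation on $[0,\tau_{\bm_0}]$, which in turn licenses the use of It\^o's lemma via Theorem~4.17 of~\cite{daprato92}.

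Next I would apply It\^o's formula to $\|\bek\|_H^2$. Because the diffusion coefficient of (\ref{e_ensrkf}) is $\sigma^{-1}C(\bee)\co^*$ contracted against the single process $\bW_t$, the quadratic variation contributes a \emph{single} trace term $\sigma^{-2}\,\textrm{Tr}(C(\bee)\co^*\co\,C(\bee))\,dt$, without the doubling present in the EnKF case. Pairing $2(\bek,\dr\bek)$ using (\ref{e_ensrkf}) with $(A\bek,\bek)=\|\bek\|_V^2$, the algebraic identity $B(\bmk,\bmk)-B(\bu,\bu)=B(\bmk,\bek)+B(\bek,\bu)$, and the annihilation $(B(\bmk,\bek),\bek)=0$, the cubic term collapses to $-2(B(\bek,\bu),\bek)\,dt$. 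The two $\frac{1}{2}\sigma^{-2}$ feedback contributions in $\bek$ and in $\beb$ each become $\sigma^{-2}$ after the It\^o pairing; grouping the one involving $\bek$ with the quadratic variation yields precisely the cancellation-ready bracket $[(C(\bee)\co^*\co\bek,\bek)-\textrm{Tr}(C(\bee)\co^*\co C(\bee))]$, while the one involving $\beb$ remains as the separate term $-\sigma^{-2}(C(\bee)\co^*\co\beb,\bek)$, reproducing~(\ref{e2_ensrkf}).

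The delicate point, and the only genuine bookkeeping obstacle I anticipate, is that the EnSRKF half-strength feedback is designed precisely so that, when paired with the single-Wiener quadratic variation, the resulting trace has the exact coefficient $\sigma^{-2}$ (not $2\sigma^{-2}$, as in EnKF where full-strength feedback pairs with a doubled trace from $\bW_t^{(k)}+\mathbf{B}_t$) required to make the cancellation-ready bracket appear cleanly. The extension of the stopping time $\tau_{\bm_0}$ to the full interval $[0,T]$ is not strictly needed for the stated proposition, which only asserts existence of some $T>0$; however, anticipating the subsequent use of (\ref{e2_ensrkf}) in the accuracy analysis, one would invoke the localized/inflated variant $\widetilde C(\bee)$ together with the \emph{a priori} bound $\frac{1}{K}\sum_k \E\|\bek(t)\|_H^2<\infty$ supplied by Theorem~\ref{ensrkf_acc_thm}; via (\ref{enkfcov_e}) this gives $\int_0^T \|\widetilde C(\bee)\|_{\textsc{hs}}^2\,dt<\infty$ almost surely, whence $\tau_{\bm_0}=T$.
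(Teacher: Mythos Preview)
Your proposal is correct and follows exactly the approach indicated by the paper, whose proof consists of the single line ``This follows trivially the proof of [Proposition~\ref{enkf_e2_prop}].'' You have correctly identified and tracked the two distinguishing features---the half-strength feedback producing $-\sigma^{-2}$ coefficients after pairing with $2\bek$, and the single Wiener process yielding a single (undoubled) trace term $\sigma^{-2}\textrm{Tr}(C\co^*\co C)$ from It\^o's lemma---and the resulting bookkeeping reproduces (\ref{e2_ensrkf}) exactly as claimed.
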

\noindent{\it Proof.} This follows trivially the proof of Theorem \ref{enkf_e2_prop}.

\medskip
Note that, in contrast to the 3DVar filter (see, e.g.,  Theorem \ref{3dv_acc_1}), the first term on the right-hand-side of (\ref{e2_enkf}) and (\ref{e2_ensrkf}) cannot be always controlled by the second term  
\begin{equation}\label{cntrl_term}
\big(C(\bee)\co^*\co \bek,\bek\big)-\alpha\,\textrm{Tr}\big(C(\bee)\co^*\co C(\bee)\big), \qquad \alpha = \frac{1}{2}, \,1
\end{equation}
which is not is not necessarily positive-definite. Crucially, however, it turns out that an appropriate localization and an additive inflation of the state-dependent covariance $C$ in (\ref{e2_enkf}) and in (\ref{e2_ensrkf}) allows one to assure  the accuracy of the estimates in EnKF and EnSRKF, as discussed below. Here, we consider the case of a general Type\,-1 operator;  Type\,-2 operators require additional technical considerations and will be dealt with in a subsequent work.  The crucial technical observation that allows one to assert the accuracy is derived in Lemma \ref{lemma_0} below.
  
\medskip

\subsubsection{Covariance localization and inflation when $\co$ is Type\,-1 observation operator and ${\co^*\co = P_{\K^\perp}}$}\label{enkf_IhPK}\mbox{}

Let $\co:H \lra H_\co$ be a finite-rank (hence bounded) Type\,-1 observation operator and, as before, denote 
\[
\K = {\rm Kernel}\, \co= {\rm Kernel}\ (\co^*\co), \quad \K^\perp = {\rm Span}\ \{\psi_n\}_{n=1}^N\quad
 {\rm and}\quad \I_h=\co^*\co. 
 \]
Throughout the remainder of this section, we will assume that $\I$ is an orthogonal projection, i.e., 
\be \label{orthoproj}
\I_h = \co^*\co = \PKp\ \mbox{and}\  \{\psi_n\}_{n=1}^N \ \mbox{is an orthonormal basis of}\ \K^\perp.
\ee 
Recall  that {\em the modal and volume observation operators} $\co$ (\S\ref{sect:interpolantops}) already satisfy $\co^*\co=P_{\K^\perp}$.  
As mentioned earlier,  dealing with a more general case of $\co^*\co$ given by an arbitrary finite rank Type\,-1 or Type\,-2 operator requires considering correlated noise in the observations and we postpone this analysis to a subsequent publication.

Next, consider the {\it localised covariance operator} obtained by replacing  $C$ in~(\ref{e2_enkf}) and (\ref{e2_ensrkf})  with  $\widetilde C=P_{\K^\perp} C P_{\K^\perp}$ which is positive definite since, using \eqref{orthoproj}, we have
\begin{align}
(\widetilde C(\bee)\co^*\co \bv,\bv) = (\widetilde C(\bee)P_{\K^\perp} \bv,\bv) = ( C(\bee)P_{\K^\perp} \bv,P_{\K^\perp}\bv) \qquad \forall\;\bv\in H.
\end{align}
Note that, depending on the nature of the observation operator, the localization can take place either in the `physical' state space or in the spectral domain (\S\ref{sect:interpolantops}).  
In both cases we have 
\begin{align}\label{tldC}
\widetilde C(\bee)= P_{\K^\perp} C(\bee)P_{\K^\perp} =  \frac{1}{K}\sum_{k=1}^KP_{\K^\perp}(\bek-\beb\,)\otimes P_{\K^\perp}(\bek-\beb\,)=C(P_{\K^\perp}\bee) .
\end{align}

Given the localised covariance $\widetilde C$ (\ref{tldC}), we obtain the following intermediate but instructive result: 

 \begin{proposition}\label{enkf_e2_prop_2}
 Let $\bu$ be the strong solution of (\ref{dissip:eqn})  where $\bu_0\in V$, ${\bf f}\in H$, and let $\bmk(t)$ satisfy~(\ref{enkf_eq}) with the covariance $C$ replaced by $\widetilde C=P_{\K^\perp} C P_{\K^\perp}$.  Then
{
\rm \begin{align}\label{e_enkf33}
\frac{1}{K}\sum_{k=1}^K\dr  \|\bek\|_H^2 &+\frac{1}{K}\sum_{k=1}^K2\nu\|\bek\|_V^2\dr t +
\sigma^{-2}{\rm Tr}\big(\widetilde C(\bee)\co^*\co\widetilde  C(\bee)\big) \nn \\
 &\hspace{3cm}=-\frac{1}{K}\sum_{k=1}^K2\big(B(\bek,\bu),\bek\big) \dr t -2\sigma^{-2}\big( C(\bee)\PKp  \beb,\PKp\beb\big) \dr t\notag\\[.2cm]
&\hspace{3.3cm}+\frac{1}{K}\sum_{k=1}^K2\sigma^{-1}\left(\widetilde C(\bee)\mathcal{O}^*\big(\dr \bW^{(k)}_t+\dr {\bf B}_t\big),\bek\right). 
\end{align}}
\end{proposition}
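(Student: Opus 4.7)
The plan is to apply Proposition \ref{enkf_e2_prop} (with $\widetilde C$ in place of $C$, which leaves the It\^o derivation intact since $\widetilde C$ remains finite-rank, positive self-adjoint and quadratic in $\bee$) to each ensemble member, sum the resulting identity over $k=1,\dots,K$, and then invoke an empirical-covariance cancellation tailored to the assumption $\co^*\co = P_{\K^\perp}$. Concretely, for each $k$ I would write
\begin{align*}
d\|\bek\|_H^2 + 2\nu\|\bek\|_V^2\, dt &= -2(B(\bek,\bu),\bek)\, dt - 2\sigma^{-2}(\widetilde C(\bee)\co^*\co\,\bek,\bek)\, dt \\
&\qquad + \sigma^{-2}\operatorname{Tr}(\widetilde C(\bee)\co^*\co\,\widetilde C(\bee))\, dt + 2\sigma^{-1}(\widetilde C(\bee)\co^*(d\bW^{(k)}_t + d\bB_t),\bek),
\end{align*}
average over $k$, and observe that the stochastic and trace pieces survive in the form appearing in (\ref{e_enkf33}), while the drift term $\frac{1}{K}\sum_k(\widetilde C(\bee)\co^*\co\,\bek,\bek)$ requires the key manipulation described below.

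The central step, and the one I expect to carry the essential content, is the identity
\begin{equation*}
\frac{1}{K}\sum_{k=1}^K (\widetilde C(\bee)\co^*\co\,\bek,\bek) = \operatorname{Tr}(\widetilde C(\bee)\co^*\co\,\widetilde C(\bee)) + (C(\bee)P_{\K^\perp}\beb, P_{\K^\perp}\beb).
\end{equation*}
To prove it I would set $\bfn^{(j)} := P_{\K^\perp}(\bee^{(j)} - \beb)$, so that $\widetilde C(\bee) = \frac{1}{K}\sum_j \bfn^{(j)}\otimes \bfn^{(j)}$ (by \eqref{tldC}), and use $\co^*\co = P_{\K^\perp}$ together with the range of $\widetilde C(\bee)$ lying in $\K^\perp$. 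Splitting $\bek = (\bek - \beb) + \beb$ gives $\langle \bfn^{(j)},\bek\rangle = \langle \bfn^{(j)},\bfn^{(k)}\rangle + \langle \bfn^{(j)},P_{\K^\perp}\beb\rangle$; squaring, summing over $k$, and invoking the centering relation $\sum_k \bfn^{(k)} = 0$ eliminates the cross term. Summing also over $j$ identifies $\frac{1}{K^2}\sum_{j,k}\langle \bfn^{(j)},\bfn^{(k)}\rangle^2 = \operatorname{Tr}(\widetilde C^2) = \operatorname{Tr}(\widetilde C\co^*\co\,\widetilde C)$ and $\frac{1}{K}\sum_j \langle \bfn^{(j)},P_{\K^\perp}\beb\rangle^2 = (\widetilde C\,P_{\K^\perp}\beb, P_{\K^\perp}\beb) = (CP_{\K^\perp}\beb,P_{\K^\perp}\beb)$.

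Inserting this identity into the averaged evolution triggers the essential cancellation: the drift contribution is $-2\sigma^{-2}\operatorname{Tr}(\widetilde C\co^*\co\,\widetilde C)$ while the It\^o correction is $+\sigma^{-2}\operatorname{Tr}(\widetilde C\co^*\co\,\widetilde C)$, and together they leave $-\sigma^{-2}\operatorname{Tr}(\widetilde C\co^*\co\,\widetilde C)$, which when transferred to the left-hand side produces precisely (\ref{e_enkf33}). The main obstacle is the cancellation identity itself: it crucially uses that $\co^*\co$ is an orthogonal projection, since otherwise $\widetilde C^2 \ne \widetilde C\co^*\co\,\widetilde C$ and the It\^o trace cannot absorb a full copy of the drift trace, explaining why more general Type-1 operators are deferred. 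The Hilbert--Schmidt integrability $\E\int_0^T \|\widetilde C(\bee)\|_{\textsc{hs}}^2\, dt < \infty$ needed to justify It\^o's formula globally can be extracted a posteriori from the resulting energy identity on the local stopping-time windows described before Proposition \ref{enkf_e2_prop}, closing the local-to-global extension and establishing existence of the $T$ stated in the proposition.
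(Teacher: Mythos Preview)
Your proposal is correct and follows the same overall architecture as the paper: apply Proposition~\ref{enkf_e2_prop} with $\widetilde C$ in place of $C$, average over $k$, and invoke the cancellation identity
\[
\frac{1}{K}\sum_{k=1}^K(\widetilde C(\bee)\co^*\co\,\bek,\bek)-\mathrm{Tr}\big(\widetilde C(\bee)\co^*\co\,\widetilde C(\bee)\big)=(C(\bee)P_{\K^\perp}\beb,P_{\K^\perp}\beb),
\]
which is precisely the paper's Proposition~\ref{lemma_0}.

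The only substantive difference lies in how you establish that identity. The paper (Appendix~\ref{lemma_0_app}) fixes an orthonormal basis $\{\psi_n\}_{n=1}^N$ of $\K^\perp$, expands all inner products and the trace in coordinates, and organises the computation through two auxiliary propositions (\ref{lemma1} and~\ref{lemma2}) that identify the coordinate sums with $\frac{1}{K}\sum_k(\widetilde C\co^*\co\bek,\bek)$ and $(C P_{\K^\perp}\beb,P_{\K^\perp}\beb)$ respectively. Your argument is coordinate-free: you write $\widetilde C=\frac{1}{K}\sum_j\bfn^{(j)}\otimes\bfn^{(j)}$ with $\bfn^{(j)}=P_{\K^\perp}(\bee^{(j)}-\beb)$, split $P_{\K^\perp}\bek=\bfn^{(k)}+P_{\K^\perp}\beb$, and use the centering $\sum_k\bfn^{(k)}=0$ to kill the cross term directly. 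Your route is shorter and more transparent, and it makes explicit why the hypothesis $\co^*\co=P_{\K^\perp}$ is exactly what is needed to equate $\mathrm{Tr}(\widetilde C^2)$ with $\mathrm{Tr}(\widetilde C\co^*\co\,\widetilde C)$; the paper's basis expansion gives the same result but obscures this structural point slightly. Both arguments are valid.
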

The proof of Proposition \ref{enkf_e2_prop_2} is immediate given (\ref{e2_enkf}), with $C$ replaced by $\widetilde{C}$, and Proposition~\ref{lemma_0} given below.

\begin{proposition}\label{lemma_0}
The following holds for  $\co^*\co = P_{\K^\perp}$ and  $\widetilde C=P_{\K^\perp} C P_{\K^\perp}${\rm :} 
\begin{align}\label{avg_err_term}
\frac{1}{K}\sum_{k=1}^K\big(\widetilde C(\bee)\co^*\co \bek,\bek\big)-{\rm Tr}\big(\widetilde C(\bee)\co^*\co\widetilde  C(\bee)\big)=\big( \widetilde C(\bee)  \beb,\beb\big).
\end{align}
\end{proposition}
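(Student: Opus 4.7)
The plan is to exploit two structural consequences of the hypotheses. First, since $\co^*\co = P_{\K^\perp}$ and $\widetilde C = P_{\K^\perp} C P_{\K^\perp}$, one checks immediately that $\widetilde C\,\co^*\co = \widetilde C$, so the trace on the left collapses to $\mathrm{Tr}(\widetilde C^2)$ and each quadratic form $(\widetilde C\,\co^*\co\,\bek,\bek) = (\widetilde C\bek,\bek)$ depends only on the projection $P_{\K^\perp}\bek$ (because $\widetilde C = \widetilde C P_{\K^\perp}$). Second, writing $\mathbf{a}^{(k)} := P_{\K^\perp}\bek$ and $\bar{\mathbf a} := P_{\K^\perp}\beb = \tfrac1K\sum_k \mathbf{a}^{(k)}$, formula \eqref{tldC} identifies $\widetilde C$ as the \emph{sample covariance} of the projected ensemble,
\begin{equation*}
\widetilde C = \frac1K\sum_{k=1}^K (\mathbf{a}^{(k)}-\bar{\mathbf a})\otimes(\mathbf{a}^{(k)}-\bar{\mathbf a}),
\end{equation*}
so the identity to be proved becomes a purely algebraic statement about a sample covariance acting on the ensemble that generates it.

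First I would expand $\mathbf{a}^{(k)} = (\mathbf{a}^{(k)}-\bar{\mathbf a}) + \bar{\mathbf a}$ inside $(\widetilde C\,\mathbf{a}^{(k)},\mathbf{a}^{(k)})$ to obtain three contributions: the diagonal piece $(\widetilde C(\mathbf{a}^{(k)}-\bar{\mathbf a}),\mathbf{a}^{(k)}-\bar{\mathbf a})$, the cross piece $2(\widetilde C(\mathbf{a}^{(k)}-\bar{\mathbf a}),\bar{\mathbf a})$, and the $k$-independent piece $(\widetilde C\bar{\mathbf a},\bar{\mathbf a})$. Averaging over $k$: the cross piece vanishes because $\sum_k(\mathbf{a}^{(k)}-\bar{\mathbf a})=0$; using the identity $(\widetilde C\, v,v) = \mathrm{Tr}(\widetilde C\, v\otimes v)$ and linearity of the trace, the diagonal piece sums to $\mathrm{Tr}\bigl(\widetilde C\cdot K\widetilde C\bigr)/K = \mathrm{Tr}(\widetilde C^2)$; and the last piece contributes $(\widetilde C\bar{\mathbf a},\bar{\mathbf a})$. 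Subtracting the $\mathrm{Tr}(\widetilde C\,\co^*\co\,\widetilde C) = \mathrm{Tr}(\widetilde C^2)$ term on the left cancels the diagonal contribution exactly and leaves $(\widetilde C\bar{\mathbf a},\bar{\mathbf a}) = (\widetilde C\beb,\beb)$, the last equality following from $\widetilde C = \widetilde C P_{\K^\perp}$.

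There is no real technical obstacle: the proof reduces to a direct algebraic manipulation once the two structural facts $\widetilde C\co^*\co = \widetilde C$ and $\sum_k(\mathbf{a}^{(k)}-\bar{\mathbf a}) = 0$ are in place. What deserves emphasis is the role this identity plays downstream. The sign-indefinite combination appearing in \eqref{e2_enkf} is transformed, after localisation $C\mapsto \widetilde C$ and ensemble-averaging, into a coercive, manifestly non-negative $-\mathrm{Tr}(\widetilde C\co^*\co\widetilde C)$ dissipation on the $\K^\perp$-component together with a residual $(\widetilde C\beb,\beb)$ that will later be absorbed via additive covariance inflation. This is precisely the hitherto unexploited cancellation flagged in the introduction and it is what ultimately enables the accuracy and stability analysis of EnKF (and, by a parallel computation, of EnSRKF).
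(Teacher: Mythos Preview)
Your proof is correct. The approach differs from the paper's in execution, though both rest on the same structural fact that $\widetilde C$ is the sample covariance of the projected ensemble. The paper works in an explicit orthonormal basis $\{\psi_n\}$ of $\K^\perp$: it expands $\mathrm{Tr}(\widetilde C\co^*\co\widetilde C)=\sum_n(\widetilde C\psi_n,\widetilde C\psi_n)$, inserts the rank-one expansion of $\widetilde C$, and then identifies the two resulting double sums via two auxiliary propositions (Propositions~\ref{lemma1} and~\ref{lemma2}) as $\frac1K\sum_k(\widetilde C\co^*\co\bek,\bek)$ and $(\widetilde C\co^*\co\beb,\beb)$ respectively. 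Your argument is basis-free: you start from the quadratic-form side, use the mean--deviation split $\mathbf a^{(k)}=(\mathbf a^{(k)}-\bar{\mathbf a})+\bar{\mathbf a}$, and the single trace identity $(\widetilde C v,v)=\mathrm{Tr}(\widetilde C\, v\otimes v)$ to collapse the diagonal contribution directly to $\mathrm{Tr}(\widetilde C^2)$. This is shorter and avoids the two intermediate propositions; the paper's route is more explicit about how the observation basis enters, which may be pedagogically useful when one later generalises beyond $\co^*\co=P_{\K^\perp}$, but for the statement as given your argument is the more economical one.
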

\noindent {\it Proof}: See Appendix \ref{lemma_0_app}.

\medskip
An analogous statement to that in Proposition \ref{enkf_e2_prop_2} holds for the EnSRKF algorithm in (\ref{ensrkf_eq}) which can be trivially verified; namely 
{
\rm \begin{align}\label{e_ensrkff33}
\frac{1}{K}\sum_{k=1}^K\dr  \|\bek\|_H^2 &+\frac{1}{K}\sum_{k=1}^K2\nu\|\bek\|_V^2\dr t  \nn \\
 &\hspace{2cm}=-\frac{1}{K}\sum_{k=1}^K2\big(B(\bek,\bu),\bek\big) \dr t -2\sigma^{-2}\big( C(\bee)\PKp  \beb,\PKp\beb\big) \dr t\notag\\[.2cm]
&\hspace{2.3cm}+\frac{1}{K}\sum_{k=1}^K2\sigma^{-1}\left(C(\bee)\mathcal{O}^*\dr \bW^{(k)}_t,\bek\right). 
\end{align}}
Note that the evolution in (\ref{e_enkf33}) and (\ref{e_ensrkff33}) still does not provide the control of the evolution of $\frac{1}{K}\sum_{k=1}^K\|\bek\|_H^2$. Thus, in order to achieve the desired control of the dynamics in (\ref{e2_enkf}) and (\ref{e2_ensrkf}), we consider both the localization and an additive covariance inflation of $C(\bee)$  in the form 
\begin{align}\label{Cmu}
C_\mu(\bee) = \frac{1}{4}\PKp C(\bee)\PKp+\mu\PKp =\frac{1}{4}\widetilde C(\bee)+\mu\PKp,
\end{align}
where $\mu$ is the additive inflation parameter. It turns out that localization followed by {\em multiplicative} inflation of the covariance, i.e., $C'_\mu = \mu\hspace{.05cm}\widetilde C$, cannot in general enforce the desired control of the dynamics in (\ref{e2_enkf}) and (\ref{e2_ensrkf});  see Remark \ref{enkf_rm1} below. The additive covariance inflation (\ref{Cmu}) leads to the following:
\begin{theorem}[Accuracy of EnKF]\label{enkf_acc_thm}
Let $\bu$ be the strong solution of (\ref{dissip:eqn}), where $\bu_0\in V$ and $f\in H$ and assume that $\co : \;H\rightarrow H_\co$ satisfies (\ref{T1_PKp}) and $\I = \co^*\co=P_{\K^\perp}$ and let $C_\mu$ as in \eqref{Cmu}. Assume that
\begin{align}\label{kappagammacond_enkf}
\frac{3}{2}\frac{\mu  \hspace{.05cm} c_2^2 \hspace{.05cm}\epsilon^2}{\sigma^2}\leqslant \nu \quad and \quad \mu\geqslant \frac{2}{3} \frac{\hspace{.05cm}c_L^2 \hspace{.05cm}\sigma^2}{\nu}M_{\bu}^2,
\end{align}
where $M_\bu=\sup_t \|\bu\|_H< \infty$ and $\epsilon$ and $c_2$ are as in \eqref{interpolant:est_1} and (\ref{T1_PKp}), and $c_L$ as in (\ref{ladineq}). 
Then, for any $\bu_0,\bm_0\in V$ and $T>0$ there exists a unique solution of (\ref{e_enkf}) satisfying
\be \label{henkfbd}
\frac{1}{K}\sum_{k=1}^K\E \|\bee^{(k)}\|_H^2 \le e^{-\gamma t} \frac{1}{K}\sum_{k=1}^K \|\bee^{(k)}_0\|_H^2 + \frac{\mu^2}{\gamma\sigma^2}\hspace{.03cm} {\rm dim}(\mathcal{K}^\perp)
\ee
and
\be \label{enkftracebd}
 \frac{\kappa}{K}\sum_{k=1}^K\, \E\, \int_0^T\|\bee^{(k)}\|^2_V\, dt
+\frac{7}{16 \sigma^2} \E \int_0^T {\rm Tr}(\widetilde C(\bee)^2\,\PKp)\, dt
\le \frac{\mu^2}{\sigma^2}\,T\,{\rm dim}(\mathcal{K}^\perp),
\ee
where
\begin{align} 
\kappa = \nu-\frac{3}{2}\frac{\mu  \hspace{.05cm} c_2^2 \hspace{.05cm}\epsilon^2}{\sigma^2},
\hspace{.5cm} \gamma = \frac{3}{2}\frac{\mu}{\sigma^2}-\frac{c_L^2}{\nu}M_{\bu}^2.
\end{align}

\end{theorem}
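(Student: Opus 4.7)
The plan is to derive a scalar differential inequality for the ensemble-averaged $H$-energy of the error of the form
\[
\tfrac{d}{dt}\,\tfrac{1}{K}{\textstyle\sum_k}\, \E\|\bek\|_H^2 \;+\; \gamma \cdot \tfrac{1}{K}{\textstyle\sum_k}\, \E\|\bek\|_H^2 \;+\; \kappa \cdot \tfrac{1}{K}{\textstyle\sum_k}\, \E\|\bek\|_V^2 \;+\; \tfrac{7}{16\sigma^2}\E\,\mathrm{Tr}(\widetilde C(\bee)^2\PKp) \;\leq\; \tfrac{\mu^2}{\sigma^2}\,\mathrm{dim}(\K^\perp),
\]
and then to apply Gronwall to the $H$-norm component while integrating in $t$ for the remaining terms. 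First I would repeat the It\^o derivation of Proposition~\ref{enkf_e2_prop} verbatim with $C(\bee)$ replaced throughout by the inflated--localised covariance $C_\mu = \tfrac{1}{4}\widetilde C(\bee) + \mu \PKp$. On the stopping-time interval $[0,\tau_{\bm_0}]$ on which the local mild solution exists, one has $\E\int_0^{\tau_{\bm_0}}\|C_\mu\|_{\textsc{hs}}^2 \dr t < \infty$ (bounded by the HS norm of $\widetilde C$ plus $\mu\sqrt{\mathrm{dim}(\K^\perp)}$), so It\^o applies; extension to $[0,T]$ will follow a posteriori from the bound below. Averaging over $k$ gives a right-hand side containing $-\tfrac{2}{\sigma^2}\big[\tfrac{1}{K}\sum_k(C_\mu \PKp\bek,\bek) - \tfrac{1}{2}\mathrm{Tr}(C_\mu\PKp C_\mu)\big]$, the convective piece $-\tfrac{2}{K}\sum_k(B(\bek,\bu),\bek)$, and a mean-zero martingale.

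The decisive step is to unlock the cancellation. Using $\PKp^2 = \PKp$ and $\widetilde C \PKp = \widetilde C = \PKp \widetilde C$, one expands $C_\mu \PKp C_\mu = \tfrac{1}{16}\widetilde C^2 + \tfrac{\mu}{2}\widetilde C + \mu^2 \PKp$. Proposition~\ref{lemma_0} then gives $\tfrac{1}{K}\sum_k (\widetilde C \PKp\bek,\bek) = \mathrm{Tr}(\widetilde C^2) + (\widetilde C \beb,\beb)$, and the bias--variance identity $\mathrm{Tr}(\widetilde C) = \tfrac{1}{K}\sum_k \|\PKp\bek\|_H^2 - \|\PKp\beb\|_H^2$ lets the linear-in-$\mu$ pieces be assembled so that the bracketed quantity collapses to
\[
\tfrac{7}{32}\mathrm{Tr}(\widetilde C^2) + \tfrac{1}{4}(\widetilde C\beb,\beb) + \tfrac{3\mu}{4K}{\textstyle\sum_k} \|\PKp\bek\|_H^2 + \tfrac{\mu}{4}\|\PKp\beb\|_H^2 - \tfrac{\mu^2}{2}\,\mathrm{dim}(\K^\perp).
\]
All terms but the last are non-negative, so after multiplication by $-2/\sigma^2$ the last term is the sole source of the constant forcing $\mu^2 \mathrm{dim}(\K^\perp)/\sigma^2$.

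Next I would extract genuine $H$-coercivity from $\tfrac{3\mu}{4K}\sum_k\|\PKp\bek\|_H^2$ via the approximation property~(\ref{T1_PKp}), $\|\PKp \bek\|_H^2 \geq \|\bek\|_H^2 - c_2^2\epsilon^2 \|\bek\|_V^2$, paying a loss of $\tfrac{3\mu c_2^2\epsilon^2}{2\sigma^2}\|\bek\|_V^2$ to the viscous dissipation. The convective term is handled by the 2D Ladyzhenskaya estimate and Young, splitting it between a $\nu$-absorbable $V$-contribution and an $M_\bu^2$-weighted $H$-contribution with constant $c_L^2/\nu$. The two inequalities (\ref{kappagammacond_enkf}) then deliver positivity of $\kappa$ and $\gamma$ at precisely the stated values. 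Taking expectation kills the martingale, and the scalar inequality above follows; Gronwall on $H$ yields (\ref{henkfbd}), while dropping the $\gamma$-term on the right and integrating in $t$ gives (\ref{enkftracebd}). The a priori bound on $\E\,\mathrm{Tr}(\widetilde C^2\PKp) = \E\|\widetilde C\|_{\textsc{hs}}^2$ produced by (\ref{enkftracebd}) then precludes finite-time blow-up of the HS norm of the covariance, so $\tau_{\bm_0} = T$ and a standard iteration of Theorem~7.4 of~\cite{daprato92} extends the local mild solution of (\ref{enkf_eq}) with $C\mapsto C_\mu$ to the whole of $[0,T]$.

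The main obstacle lies in arranging Step~2: without the cancellation of Proposition~\ref{lemma_0}, the cubic nonlinearity from the state-dependent $C(\bee)$ destroys the 3DVar-style coercivity in (\ref{e2_enkf}). The identity \emph{requires} $\co^*\co = \PKp$ (whence the restriction to modal or volume observations in this section), localisation $C\mapsto\widetilde C = \PKp C \PKp$ is needed to bring $\widetilde C \PKp = \widetilde C$ into play, and the choice of \emph{additive} inflation is essential: a multiplicative inflation $\mu\widetilde C$ would produce a $\mu^2$ prefactor on \emph{both} the coercive $\mathrm{Tr}(\widetilde C^2)$ contribution and the constant forcing, and the estimate would fail to close. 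The additive form $C_\mu = \tfrac{1}{4}\widetilde C + \mu\PKp$ instead yields a coercive term linear in $\mu$ against a quadratic-in-$\mu$ forcing, and it is exactly this linear-vs-quadratic balance that lets $\gamma > 0$ be achieved under~(\ref{kappagammacond_enkf}) and drives $\E\|\bek\|_H^2$ into an $\mathcal{O}(\mu)$ neighbourhood of zero.
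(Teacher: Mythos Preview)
Your proposal is correct and follows essentially the same route as the paper: replace $C$ by $C_\mu$ in the It\^o energy identity of Proposition~\ref{enkf_e2_prop}, expand $C_\mu\PKp C_\mu$, invoke Proposition~\ref{lemma_0} and the bias--variance identity $\mathrm{Tr}(\widetilde C)=\tfrac{1}{K}\sum_k\|\PKp\bek\|_H^2-\|\PKp\beb\|_H^2$ to obtain the collapse, then use~(\ref{T1_PKp}) for $H$-coercivity and Ladyzhenskaya/Young for the convective term before applying Gronwall and integrating. The only cosmetic difference is that the paper first writes the inequality for each $k$ (equations~(\ref{e2_enkf_1})--(\ref{e_enkf333})) and then averages to reach~(\ref{e_enkf_K}), whereas you average first and perform the collapse in one stroke; both yield the same intermediate expression and the same constants $\kappa,\gamma$.
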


 The accuracy result in Theorem \ref{enkf_acc_thm} is the first one that establishes the accuracy of EnKF, at least for the Navier-Stokes dynamics.  Earlier results of Kelly et al. \cite{KLS2014} proved well-posedness of EnKF for the Navier-Stokes dynamics but with a fully observed (infinite-dimensional) state, i.e., $ \K^\perp =H$.  Although  the statement of Theorem  \ref{enkf_acc_thm} necessitates  ${\rm dim}\ \K^\perp < \infty$, we can include cases where $\K^\perp$ is infinite-dimensional (and in particular the case $\K^\perp=H$ as in \cite{KLS2014}) by requiring that the covariance matrix observational noise  is trace class and the noise process $W$ is a (cylindrical) $Q-$Wiener process.

\begin{remark}\rm
 Note that \eqref{henkfbd} is of the form \eqref{goodfilter} with $\mathfrak{C}(\sigma^2)=\frac{\mu^2}{\gamma \sigma^2}\textrm{dim}\ \K^\perp$.  
We want to show that one can choose parameters such that $\mathfrak{C}(\sigma^2) \ra 0$ as $\sigma \ra 0$. In case of Type 1 interpolants (modal and volume), it is known \cite{AOT} that
$\textrm{dim}\ \K^\perp \sim \frac{1}{\epsilon^2}$. Choose $\frac{\mu}{\sigma^2}$ sufficiently large such that $\gamma \ge \frac{\mu}{\sigma^2}$. Now choose $\mu=\sigma^{3/2}$. Consulting \eqref{henkfbd} and \eqref{kappagammacond_enkf}, it readily follows that $\mathfrak{C}(\sigma^2) \sim \sigma \ra 0$ as $\sigma \ra 0$. Similar, procedure applies to the case of EnSRKF considered below. 
\end{remark}

The result below concerned with the accuracy of EnSRKF has not been treated before elsewhere, but it turns out that the bounds on the accuracy are essentially the same; namely we have the following: 

\begin{theorem}[Accuracy of EnSRKF]\label{ensrkf_acc_thm}
Let $\bu$ be the strong solution of (\ref{dissip:eqn}), where $\bu_0\in V$ and $f\in H$ and assume that $\co : \;H\rightarrow H_\co$ satisfies (\ref{T1_PKp}) and $\I = \co^*\co=P_{\K^\perp}$ and let $C_\mu$ as in \eqref{Cmu}. Assume that
\begin{align}\label{kappagammacond_ensrkf}
\frac{1}{2}\frac{\mu c_2^2\epsilon^2}{\sigma^2}\leqslant \nu \quad and \quad \mu\geqslant  \frac{2c_L^2\sigma^2}{\nu}M_{\bu}^2,
\end{align}
where $M_\bu=\sup_t \|\bu\|_H< \infty$, $\epsilon$ and $c_2$  are as in \eqref{interpolant:est_1} and (\ref{T1_PKp}), and $c_L$ as in (\ref{ladineq}). 
Then, for any $\bu_0,\bm_0\in V$ and $T>0$ there exists a unique solution of (\ref{e_ensrkf}) satisfying
\be \label{hensrkfbd}
\frac{1}{K}\sum_{k=1}^K\E \|\bee^{(k)}\|_H^2 \le e^{-\gamma t} \frac{1}{K}\sum_{k=1}^K \|\bee^{(k)}_0\|_H^2 + \frac{\mu^2}{\gamma \sigma^2}\hspace{.03cm} {\rm dim}(\mathcal{K}^\perp)
\ee
and
\be \label{ensrkftracebd}
\frac{\kappa}{K}\sum_{k=1}^K\, \E\, \int_0^T\|\bee^{(k)}\|^2_V\, dt
+\frac{3}{16 \sigma^2} \E \int_0^T {\rm Tr}(\widetilde C(\bee)^2\,\PKp)\, dt
\le \frac{\mu^2}{\sigma^2}\,T\,{\rm dim}(\mathcal{K}^\perp),
\ee
where 
\begin{align}
 \kappa = \nu-\frac{1}{2}\frac{\mu  \hspace{.05cm} c_2^2 \hspace{.05cm}\epsilon^2}{\sigma^2},
\hspace{.5cm} \gamma = \frac{1}{2}\frac{\mu}{\sigma^2}-\frac{c_L^2}{\nu}M_{\bu}^2.
\end{align}

\end{theorem}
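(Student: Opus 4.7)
My plan is to mirror the proof of Theorem \ref{enkf_acc_thm}, tracking the two features that distinguish EnSRKF from EnKF: in the It\^o expansion (\ref{e2_ensrkf}) the trace weight is $\alpha=1$ rather than $\tfrac12$ in (\ref{cntrl_term}), and there is an additional drift term $-\tfrac{1}{2}\sigma^{-2}C(\bee)\co^*\co\,\beb$ in the error equation (\ref{e_ensrkf}). After ensemble averaging, both modifications combine with Proposition \ref{lemma_0} to produce a cancellation that is in fact tighter than in the EnKF case.

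First I would substitute the localised and inflated covariance $C_\mu(\bee)=\tfrac{1}{4}\widetilde C(\bee)+\mu\PKp$ for $C(\bee)$ throughout (\ref{ensrkf_eq}) and form the corresponding error equation. Local existence on $[0,\tau_{\bm_0}\wedge T]$ follows from the same locally-Lipschitz argument used for EnKF (an adapted version of Theorem 7.4 of \cite{daprato92} on bounded balls of $V$); global existence will follow a posteriori from the energy estimate below. Applying It\^o to $\|\bek\|_H^2$ on the local interval yields the analogue of (\ref{e2_ensrkf}) with $C_\mu$ in place of $C$. Since $\co^*\co=\PKp$ and $\PKp\widetilde C=\widetilde C\PKp=\widetilde C$, a direct computation gives $C_\mu\co^*\co=C_\mu$ and $\text{Tr}(C_\mu\co^*\co C_\mu)=\tfrac{1}{16}\text{Tr}(\widetilde C^2)+\tfrac{\mu}{2}\text{Tr}(\widetilde C)+\mu^2\dim(\K^\perp)$.

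Summing over $k$, dividing by $K$, and invoking Proposition \ref{lemma_0} together with the variance identity $\tfrac{1}{K}\sum_k\|\PKp\bek\|_H^2=\text{Tr}(\widetilde C)+\|\PKp\beb\|_H^2$ and $\tfrac{1}{K}\sum_k(\widetilde C\beb,\bek)=(\widetilde C\beb,\beb)$, the $\alpha=1$ weight produces, after discarding the manifestly non-positive terms $-\tfrac{1}{2\sigma^2}(\widetilde C\beb,\beb)$ and $-\tfrac{2\mu}{\sigma^2}\|\PKp\beb\|_H^2$, the covariance contribution
\begin{equation*}
-\tfrac{3}{16\sigma^2}\text{Tr}(\widetilde C^2)\;-\;\tfrac{\mu}{2\sigma^2}\text{Tr}(\widetilde C)\;+\;\tfrac{\mu^2}{\sigma^2}\dim(\K^\perp)
\end{equation*}
on the right-hand side of the energy inequality; the coefficient $3/16$ (versus $7/16$ for EnKF) is precisely what the $\alpha=1$ cancellation produces. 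From here the remaining work is routine: bound $(B(\bek,\bu),\bek)$ by the 2D Ladyzhenskaya inequality (\ref{ladineq}) and Young to obtain $\tfrac{2}{K}\sum_k|(B(\bek,\bu),\bek)|\leq \nu\,\tfrac{1}{K}\sum_k\|\bek\|_V^2+\tfrac{c_L^2 M_\bu^2}{\nu}\tfrac{1}{K}\sum_k\|\bek\|_H^2$, with the uniform bounds on $\bu$ drawn from Theorem \ref{NSE_thm_2}; convert $-\tfrac{\mu}{2\sigma^2}\text{Tr}(\widetilde C)$ into the decay term $-\tfrac{\mu}{2\sigma^2}\tfrac{1}{K}\sum_k\|\bek\|_H^2$ via the richness inequality (\ref{T1_PKp}), $\|P_\K\bek\|_H^2\leq c_2^2\epsilon^2\|\bek\|_V^2$, at the cost of a $\tfrac{\mu c_2^2\epsilon^2}{2\sigma^2}\|\bek\|_V^2$ correction which is absorbed into the remaining dissipation. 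The hypotheses (\ref{kappagammacond_ensrkf}) are exactly what is required to keep $\kappa=\nu-\tfrac{1}{2}\mu c_2^2\epsilon^2/\sigma^2$ and $\gamma=\tfrac{\mu}{2\sigma^2}-c_L^2 M_\bu^2/\nu$ non-negative. Taking expectations (the stochastic integral is a genuine martingale because $C_\mu\co^*$ is Hilbert-Schmidt on the finite-dimensional $H_\co$) and applying Gronwall delivers (\ref{hensrkfbd}); integrating in time over $[0,T]$ while retaining the $\text{Tr}(\widetilde C^2)$ and $\|\bek\|_V^2$ contributions on the left then yields (\ref{ensrkftracebd}). The latter bound supplies $\E\int_0^T\|C_\mu\co^*\|_{\textsc{hs}}^2\,\dr t<\infty$, which validates It\^o beyond the local interval and lets one send $\tau_{\bm_0}\uparrow T$, thereby extending the local solution to $[0,T]$.

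The principal obstacle---identical in nature to the one encountered in EnKF---is that the state-dependent covariance is quadratic in the ensemble error, so the drift in (\ref{ensrkf_eq}) is cubic in $\bm$; naive estimates fail to produce global solutions, let alone a Gronwall decay. What rescues the argument is the algebraic cancellation exposed by Proposition \ref{lemma_0}: the potentially destabilising $\text{Tr}(\widetilde C^2)\sim\|\bm\|^4$ contribution generated by the quadratic variation of the stochastic integral is partially cancelled by a matching contribution from the deterministic drift, leaving a favourable negative coefficient. This cancellation is only accessible once covariance localisation has been performed so that $\widetilde C$ is supported on $\K^\perp$, and only an \emph{additive} inflation $\mu\PKp$---not a multiplicative one---extracts the dissipative $-\tfrac{\mu}{2\sigma^2}\text{Tr}(\widetilde C)$ term producing $\gamma>0$ while separating the residual $\text{Tr}(\widetilde C^2)$ from the forcing $\mu^2\dim(\K^\perp)$; a rescaling $\widetilde C\mapsto\mu\widetilde C$ preserves the relative signs and so cannot close the estimate.
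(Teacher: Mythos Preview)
Your approach is essentially identical to the paper's: substitute $C_\mu$ into the EnSRKF error identity (\ref{e2_ensrkf}), expand the trace and inner products, average over the ensemble, invoke Proposition~\ref{lemma_0}, and then repeat the Ladyzhenskaya/richness/Gronwall argument from the EnKF proof. The $3/16$ coefficient and the constants $\kappa,\gamma$ you arrive at match the paper exactly.

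There is one bookkeeping slip in the order of operations. After averaging you discard the non-positive term $-\tfrac{2\mu}{\sigma^2}\|\PKp\beb\|_H^2$ and retain $-\tfrac{\mu}{2\sigma^2}\text{Tr}(\widetilde C)$, which you then propose to convert into the decay term $-\tfrac{\mu}{2\sigma^2}\tfrac{1}{K}\sum_k\|\bek\|_H^2$ via the richness inequality. But the variance identity you quote gives $\text{Tr}(\widetilde C)=\tfrac{1}{K}\sum_k\|\PKp\bek\|_H^2-\|\PKp\beb\|_H^2$, so with the minus sign this conversion produces an additional $+\tfrac{\mu}{2\sigma^2}\|\PKp\beb\|_H^2$ on the right-hand side, and you have already thrown away the term that would absorb it. The fix is immediate: either keep $-\tfrac{2\mu}{\sigma^2}\|\PKp\beb\|_H^2$ until after you expand $\text{Tr}(\widetilde C)$ (it absorbs the positive remainder with room to spare), or --- as the paper in effect does --- bound the \emph{positive} trace contribution $+\tfrac{\mu}{2\sigma^2}\text{Tr}(\widetilde C)\le \tfrac{\mu}{2\sigma^2}\tfrac{1}{K}\sum_k\|\PKp\bek\|_H^2$ at the individual-$k$ level \emph{before} averaging, so that after averaging the damping appears directly as $-\tfrac{\mu}{2\sigma^2}\tfrac{1}{K}\sum_k\|\PKp\bek\|_H^2$ and no $\|\PKp\beb\|_H^2$ leftover arises.
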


\bigskip
\begin{center}
\textsc{Proofs of Theorems ( \ref{enkf_acc_thm}) and (\ref{ensrkf_acc_thm}) }
\end{center}
\noindent {\it Proof of Theorem \ref{enkf_acc_thm}}.  Consider $C_\mu$ given in (\ref{Cmu}) and  set $C\mapsto C_\mu$ in  the evolution of the error of EnKF given by  (\ref{e2_enkf}). Recall that $\I=\co^*\co=P_{\K^\perp}$ due to our assumption.  Note that
\be \label{tr_add_infl_1}
(C_\mu(\bee)\,\co^*\co \bek,\bek)= (C_\mu(\bee)\,\PKp \bek,\bek)=\frac{1}{4}(\widetilde C(\bee)\,\PKp \bek,\bek)+\mu\|\PKp\bek\|_H^2
\ee
and moreover,
\begin{align}
\textrm{Tr}\big(C_\mu\,\co^*\co C_\mu\big)&= \textrm{Tr}\big(C_\mu\, P_{\K^\perp} C_\mu\big)
=\textrm{Tr}(C^2_\mu\,\PKp)\notag\\[.2cm]
&  = \frac{1}{16}\textrm{Tr}(\widetilde C^2\,\PKp )+\mu^2 \hspace{.05cm}\textrm{Tr}(\PKp)+\frac{1}{2}\mu \hspace{.04cm} \textrm{Tr}(\widetilde C\,\PKp), \label{tr_add_infl}
\end{align}
where $C_\mu = C_\mu(\bee)$ is defined in (\ref{Cmu}), $\widetilde C = \widetilde C(\bee)$, and  $\textrm{Tr}(\PKp) = \textrm{dim}(\mathcal{K}^\perp)$. Using (\ref{tldC}) and the fact that ${\rm Tr}(\bu \otimes \bu)= \|\bu\|_H^2\quad  \forall\;  \bu \in H$, and noting  that $\widetilde{C} \PKp =\widetilde{C}$, we obtain
\begin{align} \label{trcompute}
 \textrm{Tr}(\widetilde C \PKp )=\textrm{Tr}(\widetilde C) &= \frac{1}{K}\sum_{k=1}^K\|\PKp(\bek-\beb)\|_H^2 \notag\\
& = \frac{1}{K}\sum_{k=1}^K\left(\|\PKp\bek\|_H^2-2(\PKp\bek,\beb)+\|\PKp \beb\|_H^2\right)\notag\\
 &= \frac{1}{K}\sum_{k=1}^K\|\PKp\bek\|_H^2-\|\PKp \beb\|_H^2.
\end{align}
  The evolution of  $\bek$ in  (\ref{e2_enkf}) with $C\mapsto C_\mu$ and $\co^*\co = \PKp$ is given by 
\begin{align*}
\dr \|\bek\|_H^2 +2\nu\|\bek\|^2_V\dr t &=  -2(B(\bek,\bu),\bek) \dr t -2\sigma^{-2}(C_\mu(\bee)\PKp \bek,\bek)\dr t\notag\\[.1cm]&\qquad +\sigma^{-2}\textrm{Tr}(C_\mu(\bee)\PKp C_\mu(\bee)) 
\dr t\\[.2cm]
&\qquad +2\sigma^{-1}\big(C_\mu(\bee)\mathcal{O}^*(\dr \bW^{(k)}_t+\dr {\bf B}_t),\bek\big).
\end{align*}
Then, using \eqref{tr_add_infl_1}. \eqref{tr_add_infl},  \eqref{trcompute}, and after dropping the last (non-positive) term on the right hand side of \eqref{trcompute}, we get
\begin{align} \label{e2_enkf_1}
\dr \|\bek\|_H^2 +2\nu\|\bek\|^2_V\dr t &\leq -2(B(\bek,\bu),\bek) \dr t -\frac{1}{2\sigma^2}\left((\widetilde C(\bee)\,\PKp \bek,\bek) -\frac{1}{8}\textrm{Tr}(\widetilde C^2(\bee)\,\PKp)\right) \dr t\notag\\[.1cm]&
\qquad +\frac{1}{\sigma^2}\left(\mu^2\textrm{dim}(\mathcal{K}^\perp)+\frac{\mu}{2K}\sum_{k=1}^K\|\PKp\bek\|_H^2 - 2\mu\|\PKp\bek\|_H^2\right)\dr t\notag\\[.2cm]
&\qquad +\frac{2}{\sigma}\big(C_\mu(\bee)\mathcal{O}^*(\dr \bW^{(k)}_t+\dr {\bf B}_t),\bek\big).
\end{align}
Thus,
\begin{align} \label{e_enkf333}
\dr \|\bek\|_H^2 &+2\nu\|\bek\|^2_V\dr t \, + \frac{7}{16 \sigma^2} \textrm{Tr}(\widetilde C(\bee)^2\,\PKp)  \nn \\[.1cm]
&\hspace{1cm}\leq-2(B(\bek,\bu),\bek) \dr t -\frac{1}{2\sigma^2}\left((\widetilde C(\bee)\,\PKp \bek,\bek) -\textrm{Tr}(\widetilde C(\bee)^2\,\PKp)\right) \dr t\notag\\[.1cm]&
\hspace{1.5cm} +\frac{1}{\sigma^2}\left(\mu^2\textrm{dim}(\mathcal{K}^\perp)+\frac{\mu}{2K}\sum_{k=1}^K\|\PKp\bek\|_H^2 - 2\mu\|\PKp\bek\|_H^2\right)\dr t\notag\\
&\hspace{1.5cm} +\frac{2}{\sigma}\big(C_\mu(\bee)\mathcal{O}^*(\dr \bW^{(k)}_t+\dr {\bf B}_t),\bek\big).
\end{align}
Finally, we have from (\ref{e_enkf333}) and  Proposition \ref{lemma_0} that 
\begin{align}\label{e_enkf_K}
\frac{1}{K}\sum_{k=1}^K\dr \|\bek\|^2 &+2\nu\frac{1}{K}\sum_{k=1}^K\|\bek\|_V^2\dr t +\frac{7}{16 \sigma^2} \textrm{Tr}(\widetilde C(\bee)^2\,\PKp) \dr t\nn \\
&\hspace{2cm}\leq-\frac{1}{K}\sum_{k=1}^K2(B(\bek,\bu),\bek)\dr t -\frac{1}{2\sigma^2}( C(\bee)\PKp  \,\beb,\PKp\beb)\dr t\notag\\
&\hspace{2.4cm} +\frac{1}{\sigma^2}\left(\mu^2\textrm{dim}(\mathcal{K}^\perp)-\mu\,\frac{3}{2}\frac{1}{K}\sum_{k=1}^K\|\PKp\bek\|_H^2 \right)\dr t\notag\\[.2cm]
&\hspace{2.4cm}+\frac{2}{\sigma}\frac{1}{K}\sum_{k=1}^K\big(C_\mu(\bee)\mathcal{O}^*(\dr \bW^{(k)}_t+\dr {\bf B}_t),\bek\big).
\end{align}
where the last equality is due to Proposition \ref{lemma_0}.

\noindent Given the form of (\ref{e_enkf_K}), we proceed with the proof in an analogous fashion to the proof of accuracy of the 3DVar filter in Theorem \ref{3dv_acc_1} using a large value of $\mu$ so that the damping term $-\mu\sigma^{-2}\frac{3}{2K}\sum_{k=1}^K\|\PKp\bek\|_H^2$ controls the nonlinear term $-2\frac{1}{K}\sum_{k=1}^K(B(\bek,\bu),\bek)$.

\medskip
From the properties of $B$ and the Ladyzhenskaya inequality we have 
\be  
-2|B(\bek,\bu),\bek)| \le 2c_L \|\bek\|_H\|\bek\|_V\|\bu\|_V 
\le \nu \|\bek\|_V^2 + \frac{c_L^2}{\nu}\|\bu\|_V^2\|\bek\|_H^2
\ee
and the second term in (\ref{e_enkf_K}) is negative-definite. Thus we have 
\begin{align}\label{e_enkf_K2}
\frac{1}{K}\sum_{k=1}^K\dr \|\bek\|^2 &+\nu\frac{1}{K}\sum_{k=1}^K\|\bek\|_V^2\dr t +\frac{7}{16 \sigma^2} \textrm{Tr}(\widetilde C(\bee)^2\,\PKp) \dr t\nn \\
&\hspace{2cm}\leq\frac{1}{K}\sum_{k=1}^K\left(\frac{c_L^2}{\nu}\|\bu\|_V^2\|\bek\|_H^2-\frac{3}{2}\frac{\mu}{\sigma^2} \|\PKp\bek\|_H^2\right)\dr t \notag\\[.1cm]
&\hspace{2.4cm} +\frac{1}{2\sigma^2}\left(2\mu^2\textrm{dim}(\mathcal{K}^\perp)- (C(\bee)\PKp  \,\beb,\PKp\beb)  \right)\dr t\notag\\[.1cm]
&\hspace{2.4cm}+\frac{2}{\sigma}\frac{1}{K}\sum_{k=1}^K\big(C_\mu(\bee)\mathcal{O}^*(\dr \bW^{(k)}_t+\dr {\bf B}_t),\bek\big)\notag
\end{align}
\begin{align}
&\hspace{3.cm}\leq  \frac{1}{K}\sum_{k=1}^K \Big(\frac{c_L^2}{\nu}\|\bu\|_V^2\|\bek\|_H^2-\frac{3}{2}\frac{\mu}{\sigma^2} \|\bek\|_H^2 \notag\\[.1cm]
&\hspace{2.4cm}\hspace{2.6cm}+\frac{3}{2}\frac{\mu}{\sigma^2} \|(I-P_{\K^\perp})\bek\|_H^2\Big)\dr t\notag\\[.1cm]
&\hspace{3.5cm} +\frac{1}{2\sigma^2}\left(2\mu^2\textrm{dim}(\mathcal{K}^\perp)- (C(\bee)\PKp  \,\beb,\PKp\beb)  \right)\dr t\notag\\[.2cm]
&\hspace{3.5cm}+\frac{2}{\sigma}\frac{1}{K}\sum_{k=1}^K\big(C_\mu(\bee)\mathcal{O}^*(\dr \bW^{(k)}_t+\dr {\bf B}_t),\bek\big)\notag\\[.1cm]
&\hspace{3.cm}\leq  \frac{1}{K}\sum_{k=1}^K \Big(\frac{c_L^2}{\nu}\|\bu\|_V^2- \frac{3}{2}\frac{\mu}{\sigma^2}\Big)\|\bek\|_H^2\dr t \notag\\[.1cm]
&\hspace{3.5cm}+ \frac{3}{2}\frac{\mu \hspace{.05cm}c_2^2 \hspace{.05cm}\epsilon^2}{\sigma^2} \frac{1}{K}\sum_{k=1}^K \|\bek\|_V^2\dr t\notag\\[.1cm]
&\hspace{3.5cm} +\frac{1}{2\sigma^2}\left(2\mu^2\textrm{dim}(\mathcal{K}^\perp)- (C(\bee)\PKp  \,\beb,\PKp\beb)  \right)\dr t\notag\\[.2cm]
&\hspace{3.5cm}+\frac{2}{\sigma}\frac{1}{K}\sum_{k=1}^K\big(C_\mu(\bee)\mathcal{O}^*(\dr \bW^{(k)}_t+\dr {\bf B}_t),\bek\big),
\end{align}
where we used \eqref{T1_PKp}  to obtain the last inequality. Taking the expectation in (\ref{e_enkf_K2}) and noting that  ${\E \big(\int_0^t C_\mu\co^*\dr \bW_s,\bee\big)=0}$ (since $\bW_t$ is independent of the adapted process $\bee(t)$), we get 
\begin{align}  \label{avg_stochineq}
\frac{1}{K}\sum_{k=1}^K\dt \E \|\bee^{(k)}\|_H^2 &+\kappa\frac{1}{K}\sum_{k=1}^K\, \E\, \|\bee^{(k)}\|^2_V +\frac{7}{16 \sigma^2} \textrm{Tr}(\widetilde C(\bee)^2\,\PKp)\nn \\
&\hspace{2cm} \le - \gamma \hspace{.03cm}\frac{1}{K}\sum_{k=1}^K\E \|\bee^{(k)}\|_H^2+\frac{\mu^2}{\sigma^2}\textrm{dim}(\mathcal{K}^\perp),
\end{align}
where
\begin{align}
 \kappa = \nu-\frac{3}{2}\frac{\mu  \hspace{.05cm} c_2^2 \hspace{.05cm}\epsilon^2}{\sigma^2},
\hspace{.5cm} \gamma = \frac{3}{2}\frac{\mu}{\sigma^2}-\frac{c_L^2}{\nu}M_{\bu}^2.
\end{align}
and  $M_\bu=\sup_t \|\bu\|< \infty$.
Thus, provided \eqref{kappagammacond_enkf} holds, 
by applying Gronwall inequality, we obtain \eqref{henkfbd}.
Now integrating  both sides of \eqref{avg_stochineq}, and by dropping negative terms on the right hand side, we obtain \eqref{enkftracebd}. 

\qed

\medskip
\noindent {\it Proof of Theorem \ref{ensrkf_acc_thm}}. The proof follows closely the steps of the proof of Theorem \ref{enkf_acc_thm}, except that equations (\ref{e2_enkf_1}), (\ref{e_enkf333}), (\ref{e_enkf_K})  are replaced by 
\begin{align}\label{e2_ensrkf_1}
& \dr \|\bek\|_H^2 +2\nu\|\bek\|^2_V\dr t =  -2(B(\bek,\bu),\bek) \dr t -\sigma^{-2}(C_\mu(\bee)\PKp \bek,\bek)\dr t\notag\\[.1cm]
&\quad +\sigma^{-2}\textrm{Tr}(C_\mu(\bee)\PKp C_\mu(\bee)) \dr t
-\sigma^{-2}\big(C_\mu(\bee)\PKp \hspace{.04cm}\beb,\bek\big) \dr t \notag
+2\sigma^{-1}\big(C_\mu(\bee)\mathcal{O}^*\dr \bW^{(k)}_t,\bek\big)\notag\\[.2cm]
&\leq-2(B(\bek,\bu),\bek) \dr t 
-\frac{1}{4\sigma^2}\left((\widetilde C(\bee)\,\PKp \hspace{.03cm}\bek,\bek) +\widetilde C(\bee)\,\PKp \hspace{.03cm}\beb,\bek)-\frac{1}{4}\textrm{Tr}(\widetilde C^2(\bee)\,\PKp)\right) \dr t\notag\\[.1cm]
&\qquad +\frac{1}{\sigma^2}\Big(\mu^2\textrm{dim}(\mathcal{K}^\perp)+\frac{\mu}{2K}\sum_{k=1}^K\|\PKp\bek\|_H^2 - \mu\|\PKp\bek\|_H^2 -\mu\big(\PKp \beb,\PKp\bek\big) \notag \\[.2cm]
&\qquad +\frac{2}{\sigma}\big(C_\mu(\bee)\mathcal{O}^*\dr \bW^{(k)}_t,\bek\big).
\end{align}
Thus
\begin{align}\label{e2_ensrkf_2}
& \frac{1}{K}\sum_{k=1}^K\dr \|\bek\|_H^2 +2\nu\frac{1}{K}\sum_{k=1}^K\|\bek\|^2_V\dr t  +\frac{3}{16\sigma^2}\textrm{Tr}(\widetilde C^2(\bee)\,\PKp) \dr t  \notag\\
&\hspace{1cm}\leqslant -2\frac{1}{K}\sum_{k=1}^K(B(\bek,\bu),\bek) \dr t  -\frac{1}{2\sigma^2}(\widetilde C(\bee)\,\PKp \beb,\PKp\beb) \dr t\notag\\[.1cm]
&\hspace{1cm}+\frac{1}{\sigma^2}\left(\mu^2\textrm{dim}(\mathcal{K}^\perp)-\mu\frac{1}{2}\frac{1}{K}\sum_{k=1}^K\|\PKp\bek\|_H^2 \right)\dr t
+\frac{2}{\sigma}\frac{1}{K}\sum_{k=1}^K\big(C_\mu(\bee)\mathcal{O}^*(\dr \bW^{(k)}_t+\dr {\bf B}_t),\bek\big).
\end{align}
Note that the inequality in (\ref{e2_ensrkf_2}) is essentially the same  (with a few  different coefficients but no sign changes) as that in (\ref{e_enkf_K}) for $\mathbf{B}_t\equiv 0$. 
Thus, the rest of the proof is essentially the same as that of Theorem \ref{enkf_acc_thm}. 

\qed

\begin{remark}\label{enkf_rm1}\rm \mbox{}
\begin{itemize}[leftmargin=0.7cm]
\item[(i)] Note that the additive covariance inflation in the form as in (\ref{Cmu}) is necessary in the assertions of Theorems~\ref{enkf_acc_thm} and \ref{ensrkf_acc_thm}. Using the multiplicative covariance inflation and localization in (\ref{e2_enkf}) and (\ref{e2_ensrkf}), i.e., setting $C\mapsto C_\mu = \mu\PKp C \PKp$,  cannot guarantee the accuracy of EnKF  and EnSRKF since, in such a case, the trace term  becomes $\textrm{Tr}(C_\mu\,\co^*\co C_\mu) = \mu^2\textrm{Tr}(\widetilde C^2\,\co^*\co)$ while the damping term remains linear in $\mu$  and it cannot control $\mu^2$ even if $\widetilde C = \PKp C\PKp$ has a very low condition number. Moreover, rank($C$) may be much smaller than the dimension of the state $N$ (case of covariance collapse) in which  case the additive covariance inflation is indespensible.

\item[(ii)] Note also that for the additive covariance inflation we are essentially `throwing away' the state-dependent covariance matrix $C(\bee)$ in (\ref{enkf_e2_prop}) computed from the coupled ensemble, which effectively implies bounding the error in EnKF/EnSRKF estimates by the error obtained from DA using an ensemble of independent 3DVar filters. Thus, it is   interesting to note that the theoretical accuracy of EnKF/EnSRKF does not require the covariance estimates computed from the evolving ensemble of particles although, in practice,  retaining the state-dependent covariance is likely to improve the accuracy to a degree better than than that asserted in Theorem \ref{enkf_acc_thm} and Theorem \ref{ensrkf_acc_thm}. \end{itemize}
\end{remark}

\newpage
\appendix
\section{Formal derivation of continuous-time equations of the Ensemble Square-Root Kalman Filter (EnSRKF) (\ref{ensrkf_t}).}\label{ensqkf_app}
Here, we outline the formal derivation of  the continuous-time limit for the state estimate arising through the EnSRKF dynamics (\ref{ensrkf_t}) from the discrete-time setting (\ref{ensrkf_j_beg})-(\ref{ensrkf_j_end}). Similar to \cite{BLSZ2013}, we do not embark on rigorous derivations of this dynamics. In this general setting, assuming that $\widehat C_j$ arises as an approximation of a continuous process $\widehat C(t)$ evaluated at $t = jh$, we have  that $\widehat C_j = \widehat C(jh)$, and we assume that $h \ll 1$. Furthermore, we define the process generated by the observations $(\by_j)_{j\in \mathbb{Z}}$, $\by_j\in \cal H_\co$ via 
\begin{equation}
\pmb{\eta}_{j+1} = \pmb{\eta}_j+h\by_{j+1}.
\end{equation}
 Throughout we will assume that $\Gamma= h^{-1}\Gamma_0$, $\Gamma_0>0$. This scaling implies that the noise variance is inversely proportional to the time between observations and is the relationship which gives a nontrivial  limit as $h \rightarrow 0$.
With these assumptions the dynamics  (\ref{ensrkf_j_beg})-(\ref{ensrkf_j_end}) becomes
\begin{equation}\label{enkf_disc_h}
 \bmk_{j+1} = \bar{\hbm}_{j+1} +h\hspace{.03cm}\widehat C_{j+1}\mathcal{O}^*(h\mathcal{O} \widehat C_{j+1}\mathcal{O}^*+\Gamma_0)^{-1}\mathcal{O}^*\big(h^{-1}(\pmb{\eta}_{j+1}-\pmb{\eta}_j)-\mathcal{O} \,\bar{\hbm}_{j+1} \big)+S^{(k)}_{j+1}
\end{equation}
and 
\begin{align}
 S_{j+1} = \widehat S_{j+1} T_{j+1} = \widehat S_{j+1} (I-K^{-1}h\hspace{0.03cm}\widehat S_{j+1}^*\mathcal{O}^*(h\mathcal{O}\widehat C_{j+1}\mathcal{O}^*+\Gamma_0)^{-1}\mathcal{O}\widehat S_{j+1})^{1/2}.
 \end{align}
 Next, note that
 \begin{align}
& h\hspace{.03cm}\widehat C_{j+1}\mathcal{O}^*(h\mathcal{O} \widehat C_{j+1}\mathcal{O}^*+\Gamma_0)^{-1}\mathcal{O}^*=h\hspace{.03cm}\widehat C_{j+1}\mathcal{O}^*\Gamma_0^{-1}\mathcal{O}^*+\mathscr{O}(h^2),\\
&S_{j+1} ^{(k)}=\hbmk_{j+1} -\bar{\hbm}_{j+1}-\frac{1}{2}K^{-1}h\hspace{0.03cm} \widehat S_{j+1}\widehat S_{j+1}^*\mathcal{O}^*\Gamma_0^{-1}\mathcal{O}\big(\hbmk_{j+1} -\bar{\hbm}_{j+1}\big)+\mathscr{O}(h^2),
 \end{align} 
 where $\mathscr{O}(h^2)$ denote terms of order $h^2$. With the above formulas at hand (\ref{enkf_disc_h}) can be written as 
\begin{align}\label{enkf_disc_h2}
 \bmk_{j+1} &=  \Psi(\bmk_j)+h\hspace{.03cm}\widehat C_{j+1}\mathcal{O}^*\Gamma_0^{-1}\mathcal{O}^*\big(h^{-1}(\pmb{\eta}_{j+1}-\pmb{\eta}_j)-\mathcal{O}\,\bar{\hbm}_{j+1}\ \big)-h\mathscr{S}^{(k)}_{j+1}+\mathscr{O}(h^2),
\end{align}
where 
\begin{equation}
\mathscr{S}_{j+1}^{(k)}:=\frac{1}{2}(K-1)^{-1}\hspace{0.03cm} \widehat S_{j+1} \widehat S_{j+1}^*\mathcal{O}^*\Gamma_0^{-1}\mathcal{O}\big(\Psi( \bmk_{j})-\bar{\hbm}_{j+1}\big).
\end{equation} 
and we used the fact that $\hbmk_{j+1}=\Psi(\bmk_j)$ with $\bar\hbm_{j+1}= \frac{1}{K} {\textstyle \sum_{k=1}^K\Psi( \bmk_{j} )}$.
Finally, we observe that 
\begin{equation}
\Psi( \bmk_{j} ) =  \bmk_{j}+hF( \bmk_{j})+\mathscr{O}(h^2),
\end{equation}
where the last representation of the flow map arises through the general definition of the continuous-time dynamics in (\ref{gen_ode_ds}). Thus, (\ref{enkf_disc_h}) can be  written as 
\begin{align}\label{enkf_disc_h3}
\frac{\bmk_{j+1}-\bmk_{j} }{h}&=  F(\bmk_j)+\widehat C_{j+1}\mathcal{O}^*\Gamma_0^{-1}\mathcal{O}^*\big(h^{-1}(\pmb{\eta}_{j+1}-\pmb{\eta}_j)-\mathcal{O} \,\bar{\hbm}_{j+1} \big)-\mathscr{S}^{(k)}_{j+1}+\mathscr{O}(h),
\end{align}
so that,  in the limit $h\rightarrow 0$, we formally have 
\begin{align}\label{enkf_disc_h4}
\dr \bmk&=  F(\bmk)\dr t+C\mathcal{O}^*\Gamma_0^{-1}\mathcal{O}^*\big(\dr \pmb{\eta} -\mathcal{O}\, \bar{ \bm}  \dr t \big)-\mathcal{S}^{(k)}\dr t, 
\end{align}
where $\bar{ \bm}=\frac{1}{K} {\textstyle \sum_{k=1}^K \bmk}$,
\begin{equation}
\mathcal{S}^{(k)}:=\frac{1}{2}K^{-1}\hspace{0.03cm}  \mathcal{S}  \mathcal{S}^*\mathcal{O}^*\Gamma_0^{-1}\mathcal{O}\big(\bmk-\bar{ \bm}\big).
\end{equation} 
and 
\begin{equation}
\mathcal{S}(\bm) =   \big[ \bm^{(1)}- \bar{ \bm}, \dots,  \bm^{(K)}- \bar{\bm}\big]
\end{equation}
so that 
\begin{equation}
\mathcal{S}^{(k)}:=\frac{1}{2}\hspace{0.03cm}  C(\bm)\mathcal{O}^*\Gamma_0^{-1}\mathcal{O}\big(\bmk-\bar{ \bm}\big).
\end{equation}
Next, analogously to \cite{BLSZ2013} we observe that 
\begin{equation}\label{y_sde}
h^{-1}(\pmb{\eta}_{j+1}-\pmb{\eta}_j) = \by_{j+1} = \co \bu_{j+1}+\sqrt{h^{-1}\Gamma_0}\;\Delta \mathbf{w}_{j+1}, \qquad \Delta \mathbf{w}_{j+1} \in \cal H_\co,
\end{equation}
where $ (\Delta \mathbf{w}_{j})_{j\in \mathbb{Z}}$ is an i.i.d.~sequence in $\cal H_\co$ such that $\mathbf{w}_{j} \sim \mathcal{N}(0,I)$ for all $j\in \mathbb{Z}$. Note that (\ref{y_sde}) corresponds to the Euler-Maruyama discretisation of the SDE given by 
\begin{equation}\label{y_sde1}
\dr \pmb{\eta}=  \co \bu \,\dr t+\sqrt{\Gamma_0}\;\dr  \mathbf{W}_{t}.
\end{equation}
Gathering the above results leads to (with $\Gamma_0 = \sigma^2$)
 \begin{align}  \label{ensrkf_t_app}
\dr \bmk &= \Big\{F(\bmk)+\sigma^{-2}\hspace{.04cm}C(\bm) \co^* \co (\bu-\bar{ \bm})-\textstyle{\frac{1}{2}}\sigma^{-2}\hspace{.04cm}C(\bm) \co^* \co\big(\bmk-\bar{ \bm}\big)\Big\}\dr t+ \sigma^{-1}\hspace{.04cm}C(\bm)\co^* \dr \bW_t\notag\\
&=\Big\{F(\bmk)-\textstyle{\frac{1}{2}}\sigma^{-2}\hspace{.04cm}C(\bm) \co^* \co (\bmk-\bu)-\textstyle{\frac{1}{2}}\sigma^{-2}\hspace{.04cm}C(\bm) \co^* \co\big(\bar{ \bm}-\bu\big)\Big\}\dr t+ \sigma^{-1}\hspace{.04cm}C(\bm)\co^* \dr \bW_t
 \end{align}

\section{Proof of Theorem \ref{3dvar_exist}}\label{3dvar_exist_proof}
The proof is based on the classical change of variable $\bm = \tilde\bu +\bz  $ and a path-wise argument. 
First, consider the auxiliary process $\bz(t)$ which satisfies
\begin{align}\label{z_eq}
\dr \bz +\nu A\bz\hspace{.04cm}\dr t = \sigma^{-1}C\co^*\dr \bW_t,\qquad z(0) = 0
\end{align}
with $\bW_t$ the standard Wiener process in the (finite-dimensional) observation space $\cal H_\co\equiv\mathbb{R}^N$ and  the solution (see \cite{daprato92})
\begin{align}\label{z_sol}
\bz(t) = \sigma^{-1}\int_0^te^{-\nu A(t-s)}C\co^*\dr \bW_s
\end{align}
given by the stationary $D(A^{1/2})$ -- valued  process with continuous trajectories. Note that in our setup $\bz(t)$ might not be ergodic. The ergodicity (or lack thereof) of the filtering measure in this setup will be studied in a separate publication.

\smallskip
Moreover, for  $\bz(t)\in D(A^{1/2})$ in (\ref{z_sol}) we have 
\begin{align}
\E\|\bz(t)\|^2_{D(A^{1/2})}\leqslant \frac{1}{2\nu}\textrm{Tr}(C\co^*\co C).
\end{align}
The above estimate is obtained as follows:
\begin{align}
\bz(t) = \sum_{j=1}^\infty z_j(t)e_j \qquad \textrm{and} \qquad C\co^*\bW_t =  \sum_{j=1}^\infty \big(C\co^*\bW_t\big)_j\,e_j=  \sum_{j=1}^\infty \left(\sum_{n=1}^N(C\co^*)_{j,n}\,W^{(n)}_t\right)e_j, 
\end{align}
where $z_j(t) = (\bz(t),e_j)$, with $\{e_j\}_{j=1}^\infty$ the orthonormal eigenbasis  of the Stokes operator $A$, and  $\bW_t  = (W^{(1)}_t, \dots, W^{(N)}_t)$ with $N=\textrm{dim} \,\cal H_\co<\infty$ - the dimension of the observation space. 

Then, 
\begin{align}
z_j(t) =  \sigma^{-1}\sum_{n=1}^N(C\co^*)_{j,n}\int_0^te^{-\nu \lambda_j(t-s)}\dr W^{(n)}_s. 
\end{align} 
Using the independence of the components of ${\bf W}_t$ and the \^Ito isometry, we have 
\begin{align}
\E|z_j(t)|^2 &= \sigma^{-2} \sum_{n=1}^N(C\co^*)^2_{j,n}\,\E\left |\int_0^te^{-\nu \lambda_j(t-s)}\dr W^{(n)}_s\right|^2\notag\\
& = \sum_{n=1}^N(C\co^*)^2_{j,n}\,\left |\int_0^te^{-2\nu \lambda_j(t-s)}\dr s\right|^2\leqslant \frac{1}{2\nu\lambda_j} \sum_{n=1}^N(C\co^*)^2_{j,n}
\end{align}
Furthermore, provided that $2\alpha\leqslant 1$ we have 
\begin{align}
\E\|\bz(t)\|^2_{D(A^\alpha)} = \sum_{j=1}^\infty \lambda_j^{2\alpha} \,\E |z_j|^2\leqslant \frac{1}{2\nu}\sum_{j=1}^\infty\sum_{n=1}^N\frac{1}{\lambda_j^{1-2\alpha}}(C\co^*)^2_{j,n}\leqslant \frac{1}{2\nu\lambda_1^{1-2\alpha}}\textrm{Tr}(C\co^*\co C).
\end{align}
Now, using the change of variable $ \bm =\tilde\bu+\bz$ in (\ref{3dvareq}), we find that $\tilde \bu$ solves the following differential equation with random coefficients
\begin{align}\label{utilde}
\frac{\dr}{\dr t} \tilde \bu+\nu A\tilde \bu+B(\tilde \bu+{\bf z}, \tilde \bu+{\bf z})+\sigma^{-2} C\co^*\co(\tilde \bu+{\bf z}) = \tilde {\bf f}, \qquad \tilde {\bf f} = {\bf f}+\sigma^{-2} \Pi C\co^*\co \,\bu,
\end{align} 
where $\tilde \bu(0) = \tilde \bu_0 = \bu_0$. Theorem \ref{NSE_thm_1} implies that $\bu\in C\big([0,\,T];V\big)$. Thus, using the approximation of the identity (\ref{interpolant:est_1}) and the Poincar\'e inequality we have 
\begin{align}
\|C\co^*\co \,\bu\|_H\leqslant \|C\|\|\co^*\co \,\bu\|_H\leqslant c_1\|C\|\|\bu\|_H,
\end{align}
which implies that $C\co^*\co \,\bu\in C([0,\,T];H)$ and, consequently, $\tilde {\bf f} \in C([0,\,T];H)$. Similar to \cite[Theorem 3.1]{BOT} or \cite[Theorem 3.1]{flandoli94}, for every $\om\in \Om$ and $T>0$ there exists a unique weak solution $\tilde \bu$ of (\ref{utilde}) which depends continuously in the $C\big([0,\,T];H\big)\cap L^2\big([0,\,T];V\big)$ norms on the initial condition $\tilde\bu_0$ in $H$. The rigorous proof os this statement is rather long but  it relies on the same procedure as that in \cite[Theorem 3.1]{flandoli94} for the stochastically forced Navier-Stokes equation; namely it is based on the Galerkin approximation procedure and then passing to the limit using the appropriate compactness theorems. We do not repeat these steps and  just state the necessary a priori estimates which are different in our setup.  

\smallskip
To this end  consider 
\begin{align}\label{3dvar_NS_est1}
\left(\frac{\dr }{\dr t}\tilde \bu,\tilde\bu\right)+\nu(A\tilde\bu,\tilde\bu) = -\big(B(\tilde \bu+{\bf z}, \tilde \bu+{\bf z}),\tilde\bu\big) - \sigma^{-2}\big( C\co^*\co(\tilde \bu+{\bf z}),\tilde\bu\big) +\big(\tilde{\bf f},\tilde\bu\big)
\end{align}
for a fixed $\om\in \Omega$. 
Using the properties of $B$ in (\ref{dissip:eqn}), the Ladyzhenskaya interpolation inequality and Young's inequality, we obtain the following bound on  the first term on the right-hand-side of~(\ref{3dvar_NS_est1})  
\begin{align}
|\big(B(\tilde \bu+{\bf z}, \tilde \bu+{\bf z}),\tilde\bu\big)| &= |\big(B(\tilde \bu,\tilde\bu),{\bf z}\big) +\big(B({\bf z}, \tilde \bu),{\bf z}\big)|\notag\\
&\leqslant \|\tilde \bu\|_{L^4}\|\tilde \bu\|_V\|{\bf z}\|_{L^4}+\|\tilde \bu\|_V\|{\bf z}\|^2_{L^4}\notag\\
&\leqslant c_L^{1/2}\|\tilde \bu\|_V^{3/2}\|\tilde \bu\|^{1/2}_{H}\|{\bf z}\|_{L^4}+\|\tilde \bu\|_V\|{\bf z}\|^2_{L^4}\notag\\
&\leqslant \frac{3}{4}\nu\|\tilde\bu\|^2_V+\frac{c_L^2}{4\nu^3}\|\tilde\bu\|^2_H\|{\bf z}\|^4_{L^4}+\|\tilde \bu\|_V\|{\bf z}\|^2_{L^4}\notag\\
&\leqslant \left(\frac{3}{4}+\epsilon\right)\nu\|\tilde\bu\|^2_V+\frac{c_L^2}{4\nu^3}\|\tilde\bu\|^2_H\|{\bf z}\|^4_{L^4}+\frac{1}{4\nu\epsilon}\|{\bf z}\|^4_{L^4}\notag\\
&\leqslant \left(\frac{3}{4}+\epsilon\right)\nu\|\tilde\bu\|^2_V+\frac{c_L^4}{4\nu^3}\|\tilde\bu\|^2_H\|{\bf z}\|^2_{H}\|{\bf z}\|^2_{V}+\frac{c_L^2}{4\nu\epsilon}\|{\bf z}\|^2_{H}\|{\bf z}\|^2_{V}.
\end{align}

\noindent For the second term on the right-hand-side of (\ref{3dvar_NS_est1}) we use (\ref{interpolant:est_1}) and (\ref{T1_PKp}) with $\co^*\co = \I_h$ to  obtain 
\begin{align} \label{sigest_exist}
- 2\sigma^{-2}\big( C \I_h(\tilde \bu+{\bf z}),\tilde\bu\big) &=- 2\sigma^{-2}\big( C\I_h\,{\bf z},\tilde\bu\big)- 2\sigma^{-2}\big( C \I_h\,\tilde \bu,\tilde\bu\big)\notag\\[.2cm]
&\hspace{-0.8cm}=- 2\sigma^{-2}\big( C\I_h\,{\bf z},\tilde\bu\big)- 2\sigma^{-2}\big( C \I_h\PKp\,\tilde \bu,\tilde\bu\big)\notag\\[.2cm]
&\hspace{-0.8cm}\leqslant 2\sigma^{-2}\|C \I_h\,{\bf z}\|_H\|\tilde\bu\|_H-2\sigma^{-2}(C\I_h \PKp\tilde\bu, \PKp\tilde\bu)-2\sigma^{-2}(C\I_h\PKp\tilde\bu, (I-\PKp)\tilde\bu)  \notag\\[.2cm]
&\hspace{-.8cm}\leqslant 2\sigma^{-2}\|C\|\|\I_h\,{\bf z}\|_H\|\tilde\bu\|_H - 2\sigma^{-2}\beta \|\PKp\tilde\bu\|_H^2 +2\sigma^{-2}\|C\|\|\I_h\tilde\bu\|_H\|(I-\PKp)\tilde\bu\|_H\notag\\[.2cm]
&\hspace{-.8cm}\leqslant 2\sigma^{-2}c_1^2\|C\|\|{\bf z}\|_H\|\tilde\bu\|_H - 2\sigma^{-2}\beta \|\tilde\bu\|_H^2+ 2\sigma^{-2}\beta \|(I-\PKp)\tilde\bu\|_H^2 \notag\\[.2cm]
&+2\sigma^{-2}c_1\|C\|\|\tilde\bu\|_H\|(I-\PKp)\tilde\bu\|_H\notag\\[.2cm]
&\hspace{-.8cm}\leqslant \sigma^{-2}c_1^2\|C\|^2 \|{\bf z}\|^2_H+\sigma^{-2}\|\tilde\bu\|^2_H-2\sigma^{-2}\beta \|\tilde\bu\|_H^2+ 2\sigma^{-2}\beta {\hat c_2}^2h^2\|\tilde\bu\|_V^2 \notag\\[.2cm]
&+2\sigma^{-2}c_1\hat c_2h\|C\|\|\tilde\bu\|_H\|\tilde\bu\|_V\notag\\[.2cm]
&\hspace{-.8cm}\leqslant \sigma^{-2}c_1^2\|C\|^2 \|{\bf z}\|^2_H+\sigma^{-2}\|\tilde\bu\|^2_H-2\sigma^{-2}\beta \|\tilde\bu\|_H^2+ 2\sigma^{-2}\beta {\hat c_2}^2h^2\|\tilde\bu\|_V^2 \notag\\[.2cm]
&+\frac{c_1^2{\hat c_2}^2h^2\|C\|^2}{4\sigma^{4}\nu\epsilon}\|\tilde\bu\|^2_H+\epsilon \nu\|\tilde\bu\|^2_V.
\end{align}

\noindent Finally, 
\begin{align}
\big(\tilde{\bf f},\tilde\bu\big)\leqslant \|\tilde{\bf f}\|_H \|\tilde \bu\|_H\leqslant \lambda_1^{-1/2}\|\tilde{\bf f}\|_H \|\tilde \bu\|_V\leqslant \frac{1}{4\nu\epsilon\lambda_1}\|\tilde{\bf f}\|_H^2+\epsilon\nu\|\tilde \bu\|_V^2.
\end{align}
Hence, using the above estimates in (\ref{3dvar_NS_est1}) we have that 
\begin{align}
\frac{\dr }{\dr t}\|\tilde \bu\|^2_H +\left(\frac{1}{4}-3\epsilon-\frac{2\beta{\hat c_2}^2h^2}{\nu\sigma^2} \right)\nu \|\tilde\bu\|_V&\leqslant \frac{c_L^2}{4\nu\epsilon}\|{\bf z}\|^2_{H}\|{\bf z}\|^2_{V}+  \frac{c_1^2}{\sigma^2}\|C\|^2 \|{\bf z}\|^2_H+\frac{1}{4\nu\epsilon\lambda_1}\|\tilde{\bf f}\|_H^2\notag\\[.2cm]
&\hspace{-.2cm}+\left(\frac{c_L^4}{4\nu^3}\|{\bf z}\|^2_{H}\|{\bf z}\|^2_{V}+\frac{1}{\sigma^2}\left(1+2\beta+\frac{c_1^2{\hat c_2}^2h^2\|C\|^2}{4\sigma^{2}\nu\epsilon}\right)\right)\|\tilde\bu\|^2_H.
\end{align}
Since $\tilde {\bf f}\in C([0,\,T];H)$ and $\bz\in C([0,\,T];V)$, by Gronwall's lemma we have that for $\frac{2\beta{\hat c_2}^2h^2}{\nu\sigma^2}<\frac{1}{4}-3\epsilon$
\begin{align}
\underset{t\in [0,\,T]}{\sup} \|\tilde \bu(t)\|^2_H\leqslant \mathfrak{C}, \qquad \int_0^T\|\tilde \bu(s)\|_V^2\,\dr s\leqslant  \mathfrak{C}.
\end{align}
Finally, since $\bm = \tilde \bu +\bz $, we have that 
\begin{align}
\bm\in C([0,\,T];H)\cap L^2([0,\,T];V) \quad \p\textrm{-a.s.}
\end{align}
One can show that $\bm(t)$ is adapted (to the Wiener $\sigma$-algebra) from the limiting procedure of adapted processes via the Galerkin approximation. 

\medskip

In order to outline the proof of (\ref{m_int}), we carry out the calculations for $\bm(t)$ (the proper and standard procedure would be preceded by first considering the Galerkin approximation and then apply the limiting procedure to consider $\bm$ solving (\ref{3dvareq})); analogous steps are carried out in \cite{BOT, flandoli94}. Since, $\int_0^T {\rm Tr}(C\co^* \co C) \, \dr t < \infty $ as $C$ and $\co $ are either time-independent or not singular in space-time  the Ito formula holds (e.g., \cite{daprato92}) and we have that    
\begin{align}\label{dm2}
\dr \|\bm(t)\|_H^2 = 2(\bm(t),\dr \bm(t))+\sigma^{-2}\textrm{Tr}(C\co^*\co C).
\end{align}
Then, integrating (\ref{dm2})  over $[0,\,T]$ yields 
\begin{align}\label{3dvar_m_int_exist}
\sup_{0\leqslant t\leqslant T} \|\bm(t)\|_H^2+2\nu\int_0^T\|\bm(\tau)\|_V\dr \tau \leqslant& \|\bm_0\|_H^2+\int_0^T({\bf f},\bm(\tau))\dr \tau\notag\\
&- 2\sigma^{-2}\int_0^T\big(C\co^*\co(\bm(\tau)-\bu(\tau)),\bm(\tau)\big)\dr\tau\notag\\
&+2\sigma^{-1}\sup_{0\leqslant t\leqslant T} \int_0^t(\bm(\tau),C\co^*\dr {\bf W}_t)+\sigma^{-2}T\,\textrm{Tr}(C\co^*\co C),
\end{align}
where the last term can be made time-dependent but we simplify the setup.

The martingale term in (\ref{3dvar_m_int_exist}) can be bounded with the help of the Burkholder-Gundy-Davis inequality as 
\begin{align}
2\sigma^{-1}\E\left[\sup_{0\leqslant t\leqslant T} \int_0^t(\bm(\tau),C\co^*\dr {\bf W}_t)\right]&\leqslant 2\sigma^{-1}\sqrt{\textrm{Tr}(C\co^*\co C)}\,\,\E\left[\sqrt{\int_0^t\|\bm(\tau)\|_H^2\dr \tau}\,\right]\notag\\
& \leqslant  2\sigma^{-1}\sqrt{T\,\textrm{Tr}(C\co^*\co C)}\,\,\E\left[\sup_{0\leqslant t\leqslant T} \|\bm(t)\|_H^2\right]\notag\\
&\leqslant  2\sigma^{-2}\,T\,\textrm{Tr}(C\co^*\co C) +\frac{1}{2}\E\left[\sup_{0\leqslant t\leqslant T} \|\bm(t)\|_H^2\right].
\end{align}
For the second term in on the right-hand-side in (\ref{3dvar_m_int_exist}) we use (\ref{interpolant:est_1}) and (\ref{T1_PKp}) (with $\co^*\co = \I_h$ and $\mathcal{K} = \textrm{ker}\, \co$) to obtain 
\begin{align}
- 2\sigma^{-2}\big(C \I_h(\bm-\bu),\bm\big) 
&\leqslant -2\sigma^2(C\I_h \PKp\bm, \PKp\bm)-2\sigma^{-2}(C\I_h\bm, (I-\PKp)\bm)\notag\\[.1cm]
&\quad +2\sigma^{-2}\|C \I_h\bu\|_{H}\|\bm\|_H\notag\\[.2cm]
&\leqslant -2\sigma^{-2} \beta \|\PKp \bm\|^2_H+2\sigma^{-2}\|C\I_h\bm\|_H\| (I-\PKp)\bm\|_H\notag\\[.1cm]
&\qquad +2c_1\sigma^{-2}\|C\| \|\bu\|_H\|\bm\|_H\notag\\[.2cm]
&\hspace{0cm}\leqslant -2\sigma^{-2}\beta\|\bm\|^2_H+2\sigma^{-2}\beta\|(I-\PKp)\bm\|^2_H\notag\\[.1cm]
&\qquad +2c_1\sigma^{-2}\|C\|\|\bm\|_H\| (I-\PKp)\bm\|_H+2c_1\sigma^{-2}\|C\| \|\bu\|_H\|\bm\|_H\notag\\[.2cm]
&\hspace{0cm}\leqslant -2\sigma^{-2}\beta\|\bm\|^{2}_H+2\sigma^{-2}\beta \hat c_2^2h^2\|\bm\|^2_V\notag\\[.1cm]
&\qquad +2c_1\hat c_2\sigma^{-2}\|C\|\|\bm\|_H\| \bm\|_V+2c_1\sigma^{-2}\|C\| \|\bu\|_H\|\bm\|_H\notag\\[.2cm]
&\hspace{0cm}\leqslant -2\sigma^{-2}\beta\|\bm\|^{2}_H+2\sigma^{-2}\beta \hat c_2^2h^2\|\bm\|^2_V\notag\\[.1cm]
&\qquad +2c_1\hat c_2\sigma^{-2}\|C\|\|\bm\|_H\| \bm\|_V+2c_1\sigma^{-2}\|C\| \|\bu\|_H\|\bm\|_H\notag\\[.2cm]
&\hspace{0cm}\leqslant -2\sigma^{-2}\beta\|\bm\|^{2}_H+2\sigma^{-2}\beta \hat c_2^2h^2\|\bm\|^2_V\notag\\[.1cm]
&\qquad +c_1^2\hat c_2^2\sigma^{-4}\nu^{-1}\|C\|^2\|\bm\|_H^2 +\nu\|\bm\|^2_V+2c_1\sigma^{-2}\|C\| \|\bu\|_H\|\bm\|_H\notag\\[.2cm]
\end{align}
Combining the above estimates with the Gronwall lemma one obtains that for $2\beta{\hat c_2}^2h^2/(\nu\sigma^2)$ sufficiently small 
\begin{align}
\E\left[\sup_{0\leqslant t\leqslant T} \|\bm(t)\|_H^2\right]\leqslant C, 
\end{align} 
where $C=C(\|\bm_0\|_H,T,\sigma,\nu,h,\lambda_1, \|\bu\|_H, \textrm{Tr}(C\co^*\co C))$. Using this estimate again in (\ref{3dvar_m_int_exist}) finishes the proof.

\section{Proof of Proposition \ref{lemma_0} \label{lemma_0_app}}
The proof is obtained by combining the following two propositions and it is provided subsequently.
\begin{proposition}\label{lemma1} Let  $\K^\perp={\rm Span}\ \{\psi_n\}_{n=1}^N,
\co^*\co = P_{\K^\perp}$ and  $\widetilde C=P_{\K^\perp} C P_{\K^\perp}${\rm .} Then,
\begin{align}
 \frac{1}{K}\sum_{k=1}^K (\widetilde C\co^*\co \bek,\bek) =\frac{1}{K}\sum_{k=1}^K\sum_{n=1}^N (\bek,\psi_n)(C(\bee)P_{\K^\perp}\bek,\psi_n). 
\end{align}
\end{proposition}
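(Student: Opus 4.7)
The claim is a direct computational consequence of three structural facts: (a) the hypothesis $\co^*\co = P_{\K^\perp}$, (b) the idempotency and self-adjointness of the orthogonal projection $P_{\K^\perp}$, and (c) the spectral representation $P_{\K^\perp} = \sum_{n=1}^N \psi_n\otimes\psi_n$ arising because $\{\psi_n\}_{n=1}^N$ is an orthonormal basis of $\K^\perp$.

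The plan is to fix $k$ and manipulate a single term $(\widetilde C\co^*\co\bek,\bek)$, then sum and divide by $K$. First I would substitute $\co^*\co=P_{\K^\perp}$ and use $P_{\K^\perp}^2=P_{\K^\perp}$ together with the definition $\widetilde C = P_{\K^\perp}CP_{\K^\perp}$ to obtain
\begin{equation*}
\widetilde C\co^*\co\bek \;=\; P_{\K^\perp}CP_{\K^\perp}P_{\K^\perp}\bek \;=\; P_{\K^\perp}CP_{\K^\perp}\bek.
\end{equation*}
Next, transferring the left-most $P_{\K^\perp}$ onto $\bek$ by self-adjointness yields
\begin{equation*}
(\widetilde C\co^*\co\bek,\bek) \;=\; (CP_{\K^\perp}\bek,\,P_{\K^\perp}\bek).
\end{equation*}

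Then I would expand only the right-hand factor using $P_{\K^\perp}\bek = \sum_{n=1}^N (\bek,\psi_n)\psi_n$ and apply linearity of the inner product in that slot:
\begin{equation*}
(CP_{\K^\perp}\bek,\,P_{\K^\perp}\bek) \;=\; \sum_{n=1}^N (\bek,\psi_n)\,(CP_{\K^\perp}\bek,\psi_n).
\end{equation*}
Averaging over $k=1,\dots,K$ and recalling that $C=C(\bee)$ produces precisely the right-hand side of the claim.

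Since every step is an algebraic identity following from idempotency, self-adjointness, and basis expansion, there is no genuine obstacle; the only thing to be careful about is not to expand both copies of $P_{\K^\perp}$ simultaneously (which would give a double sum over $n$ and a different-looking expression), but only the one that pairs against $\bek$ to form the scalar factor $(\bek,\psi_n)$. I would therefore emphasize in the write-up that the asymmetry between the two occurrences of $P_{\K^\perp}\bek$ in the identity is a matter of notation choice rather than substance, setting up the form needed for Proposition~\ref{lemma_0}.
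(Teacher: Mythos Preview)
Your proposal is correct and takes essentially the same approach as the paper: both are direct computations exploiting $\co^*\co=P_{\K^\perp}$, idempotency and self-adjointness of $P_{\K^\perp}$, and the expansion $P_{\K^\perp}\bek=\sum_n(\bek,\psi_n)\psi_n$. The only cosmetic difference is ordering---the paper expands $P_{\K^\perp}\bek$ first and then applies $\widetilde C$, whereas you first move $P_{\K^\perp}$ across the inner product and then expand---but the content is identical.
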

\noindent {\it Proof}\,: This follows by a direct calculation. Note that 
\begin{align*}
\co^*\co \bek=P_{\K^\perp}\bek = \sum_{n=1}^N({\bek,\psi_n})\,\psi_n
\end{align*}
and 
\begin{align*}
\widetilde C \co^*\co\,\bek = \PKp C(\bee)\PKp\bek = \sum_{n=1}^N({\bek,\psi_n})\PKp C(\bee)\,\psi_n. 
\end{align*}
Next, we observe that 
\begin{align}
\hspace{2cm}\frac{1}{K}\sum_{k=1}^K (\widetilde C\co^*\co \bek,\bek) &=  \frac{1}{K}\sum_{k=1}^K\sum_{n=1}^N({\bek,\psi_n})(\PKp C(\bee)\,\psi_n, \bek)\notag\\[.2cm]
 &=  \frac{1}{K}\sum_{k=1}^K\sum_{n=1}^N({\bek,\psi_n})(\psi_n, C(\bee)\,
 \PKp\, \bek).\notag \hspace{3cm}\qed
\end{align}

\begin{proposition}\label{lemma2} The following holds for  $\co^*\co = P_{\K^\perp}$ and  $\widetilde C=P_{\K^\perp} C P_{\K^\perp}${\rm :}
\begin{align}
 \frac{1}{K}\sum_{k=1}^K\sum_{n=1}^N({\beb,\psi_n})(\widetilde C(\bee)\,\bek,\psi_n) = (\widetilde C(\bee)\co^*\co\, \beb,\beb) = (C(\bee)\PKp \beb, \PKp \beb).
 \end{align}
\end{proposition}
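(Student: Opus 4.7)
\medskip
\noindent\textbf{Proof proposal for Proposition \ref{lemma2}.}
The plan is to dispose of the second equality by a direct algebraic manipulation and then to establish the first equality by expanding $\widetilde C(\bee)$ through the ensemble definition of $C(\bee)$, swapping the order of summation, and exploiting the elementary but crucial identity $\sum_{k=1}^K \bek = K \beb$ to collapse one of the sums.

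For the second equality, since $\co^*\co = P_{\K^\perp}$ is an orthogonal projection and $\widetilde C(\bee) = P_{\K^\perp} C(\bee) P_{\K^\perp}$, we have $\widetilde C(\bee)\,\co^*\co = P_{\K^\perp} C(\bee) P_{\K^\perp}^{\,2} = \widetilde C(\bee)$, and hence by self-adjointness of $P_{\K^\perp}$
\begin{equation*}
(\widetilde C(\bee)\co^*\co\,\beb,\beb) = (P_{\K^\perp} C(\bee) P_{\K^\perp}\beb,\beb) = (C(\bee) P_{\K^\perp}\beb, P_{\K^\perp}\beb).
\end{equation*}

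For the first equality I would start by writing, using the ensemble form \eqref{enkfcov_e},
\begin{equation*}
\widetilde C(\bee)\,\bek \;=\; \frac{1}{K}\sum_{k'=1}^K (P_{\K^\perp}(\bee^{(k')}-\beb),\bek)\,P_{\K^\perp}(\bee^{(k')}-\beb).
\end{equation*}
Taking the inner product with $\psi_n \in \K^\perp$ and using $(P_{\K^\perp}\,\cdot\,,\psi_n)=(\cdot,\psi_n)$ on both sides, I get $(\widetilde C(\bee)\bek,\psi_n) = \tfrac{1}{K}\sum_{k'}(P_{\K^\perp}\bek,\bee^{(k')}-\beb)(\bee^{(k')}-\beb,\psi_n)$. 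Multiplying by $(\beb,\psi_n)$, summing over $n$ using Parseval on $\K^\perp$, namely $\sum_{n=1}^N (x,\psi_n)(y,\psi_n) = (P_{\K^\perp} x, P_{\K^\perp} y)$, yields
\begin{equation*}
\sum_{n=1}^N (\beb,\psi_n)(\widetilde C(\bee)\bek,\psi_n)
= \frac{1}{K}\sum_{k'=1}^K (P_{\K^\perp}\bek,\bee^{(k')}-\beb)\,(P_{\K^\perp}\beb, P_{\K^\perp}(\bee^{(k')}-\beb)).
\end{equation*}

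Next I would average over $k$ and swap the two finite sums. The factor depending on $k$ becomes $\frac{1}{K}\sum_{k=1}^K (P_{\K^\perp}\bek,\bee^{(k')}-\beb) = (P_{\K^\perp}\beb, \bee^{(k')}-\beb) = (P_{\K^\perp}\beb, P_{\K^\perp}(\bee^{(k')}-\beb))$, where the key step is the mean identity $\sum_k \bek = K\beb$. Substituting back gives
\begin{equation*}
\frac{1}{K}\sum_{k=1}^K\sum_{n=1}^N (\beb,\psi_n)(\widetilde C(\bee)\bek,\psi_n)
= \frac{1}{K}\sum_{k'=1}^K (P_{\K^\perp}\beb, P_{\K^\perp}(\bee^{(k')}-\beb))^2,
\end{equation*}
which is exactly the expansion of $(C(\bee) P_{\K^\perp}\beb, P_{\K^\perp}\beb)$ via \eqref{enkfcov_e}, completing the proof when combined with the second equality above.

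The argument is essentially bookkeeping; there is no serious obstacle. The one subtle point to watch is that the appearance of a \emph{squared} inner product on both sides (rather than an asymmetric bilinear form) relies on the ability to move $P_{\K^\perp}$ symmetrically onto both arguments, which in turn requires $\co^*\co$ to be an orthogonal projection and $\widetilde C$ to be the symmetrically localised covariance $P_{\K^\perp} C P_{\K^\perp}$; without these two structural properties the collapse of the $k$-sum via $\sum_k \bek = K\beb$ would not align with the expansion of $(C(\bee) P_{\K^\perp}\beb, P_{\K^\perp}\beb)$.
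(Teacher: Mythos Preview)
Your proof is correct. Both equalities are established soundly, and the crucial identity $\frac{1}{K}\sum_k \bek = \beb$ is exactly what is needed.

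That said, the paper's route to the first equality is considerably shorter. It observes that neither $(\beb,\psi_n)$ nor the operator $\widetilde C(\bee)$ depends on the summation index $k$, so by linearity one can pull the $k$-average \emph{directly} onto $\bek$ inside $\widetilde C(\bee)$:
\[
\frac{1}{K}\sum_{k=1}^K\sum_{n=1}^N(\beb,\psi_n)(\widetilde C(\bee)\,\bek,\psi_n)
=\sum_{n=1}^N(\beb,\psi_n)\Big(\widetilde C(\bee)\,\tfrac{1}{K}{\textstyle\sum_k}\bek,\psi_n\Big)
=\sum_{n=1}^N(\beb,\psi_n)(\widetilde C(\bee)\,\beb,\psi_n),
\]
and then Parseval on $\K^\perp$ finishes. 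You instead first expand $\widetilde C(\bee)$ through its rank-one ensemble representation, which introduces a second index $k'$, apply Parseval, and only afterwards collapse the $k$-sum. This is correct but circuitous: you are effectively re-deriving linearity of $\widetilde C(\bee)$ by hand. What your approach does buy is an explicit display of the quadratic-form structure $\frac{1}{K}\sum_{k'}(P_{\K^\perp}\beb,\,P_{\K^\perp}(\bee^{(k')}-\beb))^2$, making the nonnegativity of the final expression transparent; the paper's argument hides this behind the operator notation.
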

\noindent{\it Proof}\,: By direct calculation
\begin{align*}
 \frac{1}{K}\sum_{k=1}^K\sum_{n=1}^N({\beb,\psi_n})(\widetilde C(\bee)\,\bek, \psi_n) &= \sum_{n=1}^N\,({\beb,\psi_n})\left(\widetilde C(\bee)\,{\textstyle \frac{1}{K}\sum_{k=1}^K\bek}  , \psi_n\right) \notag\\
 &= \sum_{n=1}^N\,({\beb,\psi_n})\left(\widetilde C(\bee)\,\beb, \psi_n \right)\notag \\
 &=  \sum_{n=1}^N\,({\beb,\psi_n})\left(\PKp\,\beb, C(\bee) \psi_n \right).
 \end{align*}
 Observe that
 \begin{align*}
 C(\bee)\PKp\beb = C(\bee)\sum_{n=1}^N\,(\beb,\psi_n)\,\psi_n = \sum_{n=1}^N\,(\beb,\psi_n)\,C(\bee)\psi_n.
 \end{align*}
 This completes the proof. \qed

\medskip
 \noindent {\it Proof of Proposition \ref{lemma_0}.}  
In order to derive (\ref{avg_err_term}), recall that $\{\psi_n\}_{n=1}^N$ is an orthonormal basis for $\K^\perp$ and 
\begin{align*}
 \textrm{Tr}(\widetilde C(\bee)\co^*\co \widetilde C(\bee))=\textrm{Tr}(\widetilde C^2(\bee)\PKp) = \sum^N_{n=1}(\widetilde C^2\psi_n,\psi_n) = \sum^N_{n=1}(\widetilde C\psi_n,\widetilde C\psi_n). 
 \end{align*} 
Moreover,  since $\widetilde C(\bee) = \frac{1}{K}\sum_{k=1}^KP_{\K^\perp}(\bek-\beb)\otimes P_{\K^\perp}(\bek-\beb)$, as in (\ref{tldC}), we have 
\begin{align*}
\widetilde C(\bee) \psi_n = \frac{1}{K}\sum_{k=1}^K\,(\bek-\beb,\psi_n) P_{\K^\perp}(\bek-\beb),
\end{align*}
and thus 
\begin{align}\label{trOO}
\textrm{Tr}(\widetilde C(\bee)\co^*\co \widetilde C(\bee)) &= \frac{1}{K}\sum_{n=1}^N\sum_{k=1}^K\,(\bek-\beb,\psi_n)( P_{\K^\perp}(\bek-\beb), \widetilde C(\bee) \psi_n)\notag\\
&=\frac{1}{K}\sum_{n=1}^N\sum_{k=1}^K\,(\bek-\beb,\psi_n)( \widetilde C(\bee)(\bek-\beb),  \psi_n) \notag\\
& = \frac{1}{K}\sum_{n=1}^N\sum_{k=1}^K\,(\bek-\beb,\psi_n)( \widetilde C(\bee)\,\bek,  \psi_n)\notag\\
& = \frac{1}{K}\sum_{n=1}^N\sum_{k=1}^K\,(\bek,\psi_n)( \widetilde C(\bee)\,\bek,  \psi_n)-\frac{1}{K}\sum_{n=1}^N\sum_{k=1}^K\,(\beb,\psi_n)( \widetilde C(\bee)\,\bek,  \psi_n),
\end{align}
where the third equality is due to the fact that 
\begin{align}
\sum_{n=1}^N\,({\textstyle  \frac{1}{K}\sum_{k=1}^K}\bek-\beb,\psi_n)( \widetilde C\,\beb,  \psi_n) =  0. 
\end{align}
Now, we evaluate the terms in the last line of (\ref{trOO}) as follows:
\begin{align}\label{I11}
\sum_{n=1}^N\sum_{k=1}^K\,(\bek,\psi_n)( \widetilde C(\bee)\,\bek,  \psi_n) =& \sum_{n=1}^N\sum_{k=1}^K\,(\bek,\psi_n)( C(\bee)\PKp\,\bek,  \psi_n)  \nn \\
& = \sum_{k=1}^K (\widetilde C(\bee)\co^*\co \bek,\bek),
\end{align}
where the last equality due to Proposition \ref{lemma1}. The second term in the last line of (\ref{trOO})  can be evaluated as follows
\begin{align}\label{I12}
& \frac{1}{K}\sum_{n=1}^N\sum_{k=1}^K\,(\beb,\psi_n)( \widetilde C(\bee)\,\bek,  \psi_n) = \sum_{n=1}^N\,(\beb,\psi_n)( \widetilde C(\bee)\,\beb,  \psi_n) \nn \\
& \qquad \qquad =  (\widetilde C(\bee)\co^*\co \,\beb,\beb) =  ( C(\bee)\PKp  \,\beb,\PKp\beb)\geq 0, 
\end{align}
where the second equality follows from Proposition \ref{lemma2}. Thus, substituting (\ref{I11}) and (\ref{I12}) into (\ref{trOO}) leads to  (\ref{avg_err_term}). \qed

{\small 
}
\end{document}